\newcommand{\dd}{{\mathrm d}}
\newcommand{\E}{\mathds{E}}
\newcommand{\N}{\mathds{N}}
\newcommand{\Z}{\mathds{Z}}
\newcommand{\R}{\mathds{R}}
\newcommand{\I}{\mathds{1}}
\theoremstyle{plain}
\newtheorem{theorem}{Theorem}[section]
\newtheorem{proposition}{Proposition}[section]
\newtheorem{lemma}{Lemma}[section]
\newtheorem{corollary}{Corollary}[section]
\newtheorem{claim}{Claim}[section]
\theoremstyle{definition}
\theoremstyle{remark}
\numberwithin{equation}{section}
\begin{document}

\title{The Discrete and Continuum Broken Line Process}
\author{Leonardo T. Rolla, Vladas Sidoravicius,\\
        Donatas Surgailis, Maria E. Vares \\ {\small 
        ENS-Paris, CWI and IMPA, IMI, CBPF
}}
\date{February 26, 2010}
\maketitle

\begin{abstract}
In this work we introduce the discrete-space broken line process
(with discrete and continues parameter values)
and derive some of its properties.
We explore polygonal Markov fields techniques developed by Arak-Surgailis.
The discrete version is presented first and a natural continuum generalization
to a continuous object living on the discrete lattice
is then proposed and studied.
The broken lines also resemble the Young diagram and the Hammersley process
and are useful for computing last passage percolation values and finding maximal
oriented paths.
For a class of passage time distributions there is a family of boundary conditions that make the process stationary and self-dual.
For such distributions there is a law of large numbers and the process extends to the infinite lattice.
A proof of Burke's theorem emerges from the construction.
We present a simple proof of the explicit law of large numbers for last passage percolation as an application.
Finally we show that the exponential and geometric distributions are the only non-trivial ones that yield self-duality.
\end{abstract}

This preprint has the same numbering of sections, equations, figures and theorems as the the published article
``\emph{Markov Process. Related Fields 16 (2010), no. 1, 79-116.}''

{\small AMS Subject Classifications: 60K35, 82B20, 60G60, 60G55. 

Key words: spatial random processes, Hammersley process, last passage
     percolation, time constant, broken line process.}

\section{Introduction}

The main goal of this work is the introduction and analysis of a process hereby called the \emph{continuum broken line process}, whose discrete analogue has been  considered for instance in~\cite{sidoravicius99}.
This process might be viewed as a generalization of the well known Hammersley process, as considered by Aldous and Diaconis~\cite{aldous95}, and also studied by Rost~\cite{rostXX}.
The Hammersley process fits into the broad class of polygonal Markov fields, which have been promoted by A.~N.~Kolmogorov in his later years, and which have been studied in a sequence of works by Arak and Surgailis~\cite{arak89,arak89b,arak89a,arak89c,arak90,arak91}.
Indeed, we find in writings of J.~Hammersley some indication that he also had thought about particular cases of such fields, without however developing the research in this direction. 

We may view these processes, which also resemble Young diagrams (see~\cite{fulton97}, \cite{pak06} and references therein), as useful tools for dealing with first/last passage problems, more specifically in the search for maximal oriented paths.
This is indeed our motivation to investigate them further.
The geometric approach undertaken in this work sheds different light into the basic problems.
It provides a clear geometric interpretation and transparent arguments for the law of large numbers (with concentration inequalities) for the asymptotic velocity of directed last passage percolation, results that have been obtained by various authors since the seminal article by Rost~\cite{rost81} (see for instance~\cite{balazs06,johansson00,seppalainen98}).
This is briefly reviewed and discussed in Section~\ref{sec1appl}.
Fluctuations, however, are still beyond the reach of these geometric techniques.

The \emph{discrete geometric broken line process} can be informally described as the following particle system with creation and annihilation.
For space-time coordinates $(t,x)\in \mathbb{Z}^2$ with $t+x$ even, the set of all such $(t,x)$ being denoted by $\tilde{\mathbb {Z}}^2$, consider independent random variables  $\xi_{t,x}$ which are geometrically distributed with parameter $0<\lambda^2<1$:   $P(\xi_{t,x}=k)=(1-\lambda^2)\lambda^{2k}$, $k=0,1,\dots.$ (The evolution 
takes place in discrete space-time coordinates, which is not the reason for the name). At each time-space point $(t,x) \in \tilde{\mathbb {Z}}^2$, $\xi_{t,x}$ pairs of particles are born with opposite velocities $\pm1$. The particles move with their constant velocities. When moving particles with opposite velocities collide, they annihilate each other. Because of the possibility of many particles sharing the same time-space point, one needs to fix an annihilation rule: we set the `oldest' particles  as those that get annihilated at each collision.

The discrete broken line process consists of the space-time trajectories of the particles. It is defined by first considering finite volume systems, on bounded time-space hexagonal domains $S$ in $\tilde{\mathbb {Z}}^2$.
Suitable (geometric) boundary conditions on the left boundary of $S$ (cf.\ Figure~\ref{fig1hexdomain}) allow the construction of the infinite volume system, due to the duality (or reversibility) that appears under the proper geometric distribution, which then yields a consistent set finite volume processes and allows to define the infinite volume process, stationary with respect to the translations in $\mathbb{Z}^2$.
The broken lines follow the trajectories of the particles; when the particle is annihilated it twists and follows the backward path of the particle responsible for the annihilation; when the particle is born it twists again, following the forward path of the pairing particle and so on.
This is detailed in Section~\ref{sec1discretebl}.

For the \emph{continuum broken line process}, to be examined in Section~\ref{sec1continuumbl}, instead of having a certain number of (pairs of) particles as above, we speak of a \emph{mass} $\xi$, where $\xi \in \mathbb {R}_+$.
The continuum broken lines are not necessarily determined by sequentially associating pairs of `particles' as in Section~\ref{sec1discretebl}, but instead we need to associate mass, and this cannot be understood by simply following the mass associations, as the mass may branch as well.
This brings additional difficulties to the construction itself, which can be dealt with by looking at finite-size broken lines.
That this approach is sufficient for the description and the applications one has in mind is clarified through Theorem~\ref{theo1etaell} and the connection to the last passage percolation problem is made clear in Proposition~\ref{prop1llpp}.
Reversibility plays naturally a very important role.
(See also Section~4 in~\cite{balazs06}.)
Our results also show that the boundary conditions give no asymptotic contribution to the total flow of broken lines that cross a given domain when they have this suitable distribution.

The characterization of geometric and exponential laws as the reversible measures in the broken line process is interesting by itself. Also, an alternative proof for Burke's theorem emerges from the analysis in this work. 

The paper is structured as follows: in Section~\ref{sec1discretebl} we discuss the simpler discrete process. The main part of the work is carried out in Section~\ref{sec1continuumbl}, where the continuum process is defined precisely and the main results of the paper are proven. Section~\ref{sec1appl} is devoted to the connection with first/last passage percolation; we first make the connection clear, and them present the proof of the large deviation estimates.
The Appendices contain detailed proofs of facts that should be clear from the picture.

\section{The geometric broken line process}
\label{sec1discretebl}

We start by describing the evolution of a particle system with discrete time step.
This system is defined on certain polygonal finite space-time domains.
Seen as a non-homogeneous Markov chain, its evolution is self-dual and consistent.
As a consequence, the model can be extended to the whole lattice.
The broken lines are defined from the space-time trajectories of
the particles.

Everything presented in this section will be generalized afterwards but we found
it instructive to introduce the discrete geometric broken lines beforehand.

\subsection{Evolution of a particle system}
\label{sec1evolution}

We consider a \emph{hexagonal domain} $S \subset \tilde {\Z}^2$, i.e., a domain
of the form
\begin{equation*}
  S = \bigl\{y=(t,x)\in \tilde {\Z}^2: t_0 \leqslant t\leqslant t_1,
  x_{t,-} \leqslant x\leqslant x_{t,+} \bigr\},
\end{equation*}
where $t_0, t_1 \in {\Z}, t_0 < t_1 $ are given points, and $x_{t,-} \leqslant
x_{t,+}, t_0 \leqslant t\leqslant t_1$ are paths in $\tilde {\Z}^2 $ such that
for some $t_0 \leqslant t_{01}^\pm \leqslant t_1$, $ x_{t+1, \pm} - x_{t,\pm} =
\pm 1 \,(t_0 \leqslant t < t^\pm_{01}) $ and $x_{t+1,\pm} - x_{t,\pm} = \mp 1
\,(t_{01}^\pm\leqslant t < t_1)$, as in Figure~\ref{fig1hexdomain}.
\begin{figure}[!htb]
{
\small
\psfrag{x}{$x$}
\psfrag{t01-}{$t_{01}^-$}
\psfrag{t01+}{$t_{01}^+$}
\psfrag{t0}{$t_{0}$}
\psfrag{t1}{$t_{1}$}
\psfrag{t}{$t$}
 \centering
 \includegraphics[scale=.3]{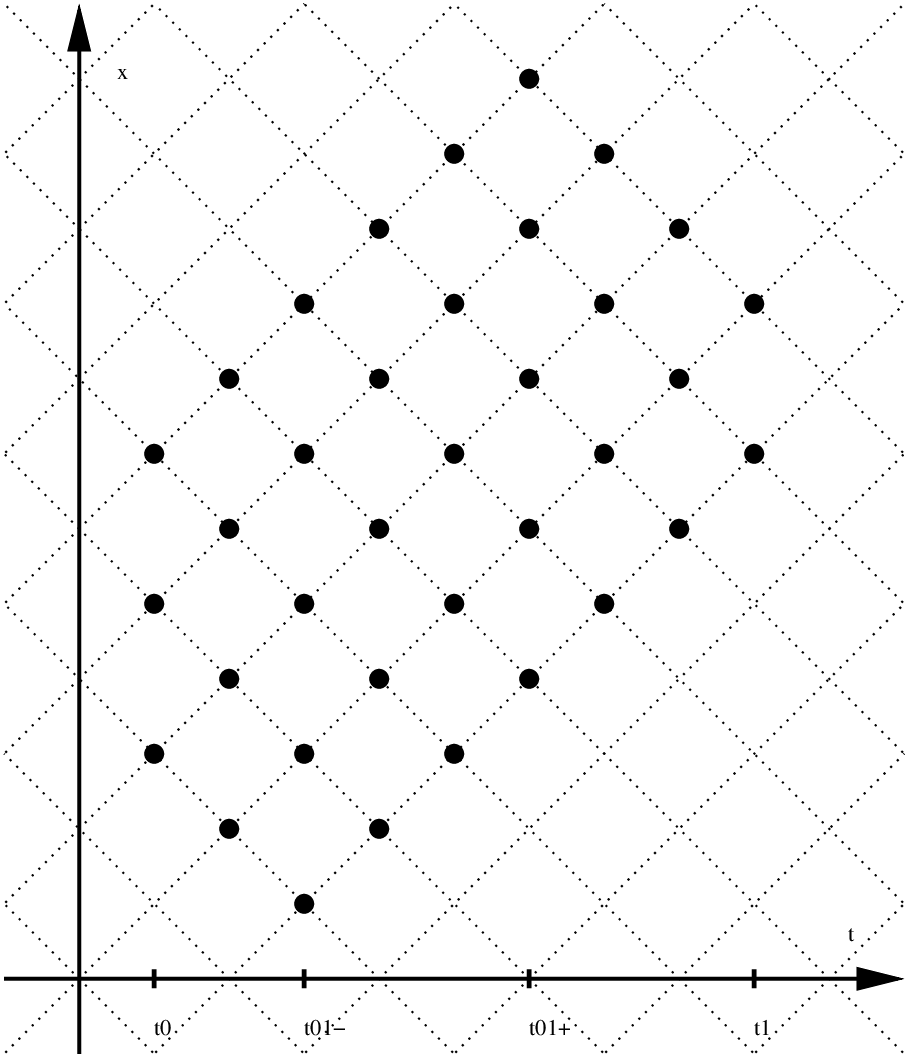}
\caption{
a hexagonal domain.}
\label{fig1hexdomain}
}
\end{figure}
Let $\partial_\pm S = \bigl\{(t,x) \in S: t_0\leqslant t\leqslant t^\pm_{01}, x= x_{t,\pm} \mbox{ or } t=t_0 \bigr\}$, so $\partial_+ S$, $\partial_- S$, and $\partial_+ S \cap \partial_- S$ are the northwest, southwest, and west boundaries, respectively.

For any point $y \in S$, let $\eta^+_{y}, \eta^-_{y} $ denote the
numbers of ascending (moving with velocity $+1$)  and descending
(moving with velocity $-1$) particles which `leave' $y$, and $\zeta^+_{y},
\zeta^-_{y} $ the respective numbers of ascending and descending particles
which `come' to $y$.
Each pair of incoming particles with opposite
velocities is annihilated, and a certain number $\xi_{y}\in\{0,1,2 \dots\} $ of
\emph{pairs} of particles with opposite velocities are born at $y$, that is,
\begin{equation}
\label{eq12.3}
\eta^+_{y} = \xi_{y} + \bigl[\zeta^+_{y}- \zeta^-_{y}\bigr]^+,
\quad
\eta^-_{y} = \xi_{y} + \bigl[\zeta^-_{y}- \zeta^+_{y}\bigr]^+,
\end{equation}
which implies
\begin{equation*}
 \zeta^+_{y} - \zeta^-_{y} =
 \eta^+_{y} - \eta^-_{y},
\end{equation*}
so at each point $y\in S$, particles can be created or killed by pairs only.
Moreover,
\begin{equation}
\label{eq12.4}
\zeta^\pm_{t,x} = \eta^\pm_{t-1,x\mp 1}, \quad y \in
S\backslash
\partial_\mp S,
\end{equation}
as all transformations of particles may occur only at lattice points
$y \in S$.
Put
\begin{eqnarray*}
& &
 \eta = (\eta^+_{y}, \eta^-_{y}: y \in S),
\\ & &
 \xi = (\xi_{y}: y \in S),
\\ & &
 \zeta^\circ_\pm=(\zeta_y^\pm:y\in\partial_\mp S),
\\ & &
 \zeta^\circ=(\zeta^\circ_+,\zeta^\circ_-).
\end{eqnarray*}
We may regard $\eta $ as the configuration in $S$ and $\zeta^\circ $ as the boundary data.
Notice that there is a 1-1 correspondence between $(\zeta^\circ, \xi ) $
and $(\zeta^\circ, \eta)$, where $\zeta^\pm_{y}, \eta^\pm_{y}, \xi_{y} $
are related by~(\ref{eq12.3})-(\ref{eq12.4}).
Note that~(\ref{eq12.3})~and~(\ref{eq12.4}) imply
\begin{equation}
\label{eq12.5}
\eta^+_{t,x} - \eta^-_{t,x} = \eta^+_{t-1,x-1}
- \eta^-_{t-1,x+1}, \quad
(t,x) \in S\backslash (\partial_+S \cup
\partial_- S).
\end{equation}

The probability measure $P_S$ corresponding to the evolution of this
particle system in the hexagonal domain $S$ can now be defined as follows.
Let $0<\lambda < 1$ be a parameter.
Assume that all $\zeta^\pm_{y}$'s in $\zeta^\circ $ are independent and
Geom($\lambda$)-distributed, i.e.
\[
P(\zeta^\pm_{y} =k) = (1-\lambda) \lambda^k, \quad k=0,1, \dots,
\quad y \in \partial_\mp S.
\]
Assume also that all $\xi_{y}$'s in $\xi $ are independent and
Geom($\lambda^2$)-distributed and, moreover, $\zeta^\circ $ and
$\xi $ are independent.
In other words, $\zeta^\pm_{y}, y \in \partial_\mp S$ (the number of particles which `immigrate' to $S$ through its left boundary $\partial_+S \cup \partial_-S $) are i.i.d.\ Geom($\lambda$)-distributed, and $\xi_{y}, y\in S$ (the number of pairs of particles with opposite velocities born inside $S$) are i.i.d.\ Geom($\lambda^2$)-distributed, independent of the $\zeta^\pm_{y}$'s.
During the evolution, the born particles move with constant velocities $+1$ or $-1$ until they collide at some lattice point $y \in S$, after which the colliding particles die (annihilate).
Let $P_S $ denote the resulting distribution of $(\zeta^\circ,\eta)$.

\subsection{An equivalent description for the evolution of the particles}

Below we provide another description of the evolution of the particle systems
introduced above.
It will be more convenient to derive some properties of the evolution using this
description.
Let $\Sigma_S $ be the set of all configurations $\eta$, where $\eta^\pm_{y}$
take values $0,1,\dots $ and satisfy relations~(\ref{eq12.5}), let $\Sigma_{\partial_\circ S} $ be the set of all configurations $\zeta^\circ$.
For $\zeta^\circ \in \Sigma_{\partial_\circ S}$, let $\Sigma_{S|\zeta^\circ}$ denote the class of all $\eta \in \Sigma_S$ that satisfy
\begin{equation}
\label{eq12.6}
\eta^+_{t,x} - \eta^-_{t,x}
= \begin{cases}
 \zeta^+_{t,x}- \eta^-_{t-1,x+1}, & \mbox{ if } (t,x)\in \partial_-S \backslash
 \partial_+S \\
 \eta^+_{t-1,x-1}- \zeta^-_{t,x}, & \mbox{ if } (t,x)\in \partial_+S \backslash
 \partial_-S \\
 \zeta^+_{t,x}- \zeta^-_{t,x},    & \mbox{ if } (t,x)\in \partial_+S \cap \partial_-S
.
 \end{cases}
\end{equation}
Let $\bar \Sigma_S $ be the set of all configurations $\bar \eta = (\zeta^\circ, \eta)$ with $\zeta^\circ \in {\Sigma}_{\partial^\circ S}$ and $\eta \in \Sigma_{S|\zeta^\circ}$.

We shall define a probability measure  $Q_{S|\zeta^\circ}(\eta)$ on
$\Sigma_{S|\zeta^\circ} $ as a (non homogeneous)
Markov chain whose values at each time $t$ are restrictions of a configuration
$\eta = \bigl(\eta^+_{u,x}, \eta^-_{u,x}: (u,x) \in S\bigr) \in
{\Sigma}_{S|\zeta^\circ}$ on $t=u$. In other words, $\eta_t = (\eta^+_{t,x},
\eta^-_{t,x}: x\in S_t)$, where $S_t = \bigl\{x: (t,x) \in S\bigr\}$.

The transition probabilities of the Markov chain are defined as follows (for
simplicity of notation, we assume $t^+_{01} = t^-_{01} = t_{01}$ below).

\noindent (i) At $t=t_0$, the distribution of
$\eta_{t_0} = (\eta^+_{t_0,x}, \eta^-_{t_0,x}: x \in S_{t_0} )$
depends only on $\zeta_{t_0} = (\zeta^+_{t_0,x}, \zeta^-_{t_0,x}:
x \in S_{t_0} ) $ and is given by
\begin{equation}
\label{eq12.7}
Q_{S|\zeta^\circ}(\eta_{t_0}|\zeta_{t_0}) = \prod_{x \in S_{t_0}}
q(\eta^+_{t_0,x}, \eta^-_{t_0,x}| \zeta^+_{t_0,x}, \zeta^-_{t_0,x}),
\end{equation}
where
\begin{equation}
\label{eq12.8}
q(n^+, n^-| m^+, m^-) = Z^{-1}_{m^+,m^-} \lambda^{n^++ n^-}
\delta_{\{n^+ -n^- = m^+-m^-\}},
\end{equation}
$n^\pm, m^\pm =0,1,\dots,\,
0< \lambda <1 $ is a parameter, and
\begin{equation}
\label{eq12.9}
Z_{m^+,m^-} = \sum_{n^+,n^-=0}^\infty \lambda^{n^++n^-}
\delta_{\{n^+-n^- = m^+-m^-\}}
= \lambda^{|m^+-m^-|}/(1-\lambda^2).
\end{equation}

\noindent (ii) Let $t_0 < t \leqslant t_{01}$.
The distribution of
$\eta_t = (\eta^+_{t,x}, \eta^-_{t,x}, x\in S_t) $
depends only on $\eta_{t-1} =
(\eta^+_{t-1,x}, \eta^-_{t-1,x}, x\in
S_{t-1}) $ and $ \zeta_{t,+} = \zeta^-_{t, x_{t,+}},
\zeta_{t,-} = \zeta^+_{t,x_{t,-}} $, according to the transition
probability
\begin{equation}
\label{eq12.10}
\begin{array}{rcl}
Q_{S|\zeta^\circ}(\eta_t|\eta_{t-1}, \zeta_{t,\pm})
&=& q(\eta^+_{t,x_{t,+}}, \eta^-_{t,x_{t,+}}|\eta^+_{t-1,x_{t-1,+}},
\zeta^-_{t,x_{t,+}})\\
&& \times\ \ q(\eta^+_{t,x_{t,-}}, \eta^-_{t,x_{t,-}}|\eta^+_{t-1,x_{t-1,-}},
\zeta^+_{t,x_{t,-}})\\
&& \times\ \ \prod_{x\in S_t, x\neq x_{t,\pm}} q(\eta^+_{t,x}, \eta^-_{t,x}|
\eta^+_{t-1,x-1}, \eta^-_{t-1,x+1}).
\end{array}
\end{equation}

\noindent (iii) Let $t_{01} < t \leqslant t_1$.
The distribution of
$\eta_t = (\eta^+_{t,x}, \eta^-_{t,x}, x\in S_t) $
depends only on $\eta_{t-1} =
(\eta^+_{t-1,x}, \eta^-_{t-1,x}, x\in
S_{t-1}) $ and
\begin{equation}
\label{eq12.11}
Q_{S|\zeta^\circ}(\eta_t|\eta_{t-1}) =
\prod_{x\in S_t} q(\eta^+_{t,x}, \eta^-_{t,x}|
\eta^+_{t-1,x-1}, \eta^-_{t-1,x+1}).
\end{equation}

Let $\Pi_\lambda (\zeta^\circ)$ be the product geometric distribution, i.e., all
random variables $\zeta^\pm_y, y\in \partial_\mp S$, are independent and Geom($\lambda $)-distributed.
Defining the probability measure $Q_S (\bar \eta) = Q_S (\zeta^\circ, \eta)$ by
\[
Q_S(\zeta^\circ, \eta ) =  \Pi_\lambda (\zeta^\circ)
Q_{S|\zeta^\circ}(\eta)
\]
we have $Q_S = P_S$.

\subsection{Self-duality (time reversibility)}
The transition probability~(\ref{eq12.8})
satisfies the following relation
\begin{equation}
\label{eq12.15}
\pi_\lambda (m^+) \pi_\lambda (m^-) q(n^+,n^-|m^+,m^-) =
\pi_\lambda (n^+) \pi_\lambda (n^-)
q(m^-,m^+|n^-,n^+),
\end{equation}
$m^\pm, n^\pm = 0,1,\dots, $ where $\pi_\lambda $
is Geom($\lambda$)-distribution.
A similar duality relation holds for transition probabilities of the Markov
chain $\eta_t, t_0 \leqslant t\leqslant t_1$.
For example, if $t_{01} < t\leqslant t_1$, then
from~(\ref{eq12.15}),(\ref{eq12.11}) it follows that
\begin{eqnarray*}
&\prod_{x\in S_t} \pi_\lambda (\eta^+_{t-1,x+1}) \pi_\lambda(\eta^-_{t-1,x+1})
q(\eta^+_{t,x},\eta^-_{t,x}| \eta^+_{t-1,x-1},
\eta^-_{t-1,x+1}) \\
&= \prod_{x\in S_t} \pi_\lambda (\eta^+_{t,x}) \pi_\lambda (\eta^-_{t,x})
q(\eta^-_{t-1,x+1},\eta^+_{t-1,x+1}| \eta^-_{t,x},
\eta^+_{t,x}).
\end{eqnarray*}
The above
duality implies that the construction of
$P_S (\bar \eta) = Q_S(\bar \eta) $ can be reversed in time. Namely, in the
`dual picture', $\eta^+_{y} $ is the number of
`descending' particles which `come' to $y$ from the right,
and $\zeta^-_{y}$ is the number of `ascending' particles which
`leave' $y$ in the same direction. A similar `dual interpretation'
can be given to $\eta^-_{y}, \zeta^+_{y}$.

In the dual construction, $\eta^\circ = (\eta^\pm_{y}: y \in \partial^\pm S)$ is the boundary
condition, where $\partial^\pm S$ are the northeast and southeast boundaries of $S$, respectively.
The probability measure $\hat Q_{S|\eta^\circ} (\zeta)$ is defined on the set
$\hat \Sigma_{S|\eta^\circ} $ of all configurations $\zeta = (\zeta^+_{y},
\zeta^-_{y}, y \in S)$ which satisfy conditions analogous to~(\ref{eq12.5}),(\ref{eq12.6}).
The definition of $\hat Q_{S|\eta^\circ} (\zeta)$ is completely analogous to
that of $Q_{S|\zeta^\circ}(\eta) $ and uses a Markov chain $\hat \eta_t, t_0
\leqslant t\leqslant t_1$ run in the time reversed direction, whose transition
probabilities are analogous to~(\ref{eq12.7}),(\ref{eq12.10}),(\ref{eq12.11}).
Then if $\Pi_\lambda (\eta^\circ) $ is the product geometric distribution on
configurations $\zeta^\circ$, we set
$$
\hat Q_S (\zeta, \eta^\circ) = \Pi_\lambda (\eta^\circ)
\hat Q_{S|\eta^\circ}(\zeta)
$$
and obtain the equality  for the probability measures on $\bar \Sigma_S$
\begin{equation}
\label{eq12.16}
\hat P_S = \hat Q_S
= Q_S = P_S.
\end{equation}

\subsection{Consistency}

Let $S' \subset S''$ be two bounded hexagonal domains in $\tilde {\Z}^2 $, and
let $P_{S'}, P_{S''} $ be the probability distributions on the
configuration spaces $\bar \Sigma_{S'}, \bar \Sigma_{S''}$, respectively, as
defined above.
The probability measure $P_{S''}$ on $\bar \Sigma_{S''}$ induces a probability
measure $P_{S''|S'}$ on $\bar \Sigma_{S'}$ which is the $P_{S''}-$ distribution
of the \emph{restricted process} $\bar \eta_{t,x}, (t,x) \in S$.
Then the following \emph{consistency property} is true:
\begin{equation}
\label{eq12.17}
P_{S''|S'} = P_{S'}.
\end{equation}
The proof of~(\ref{eq12.17}) uses~(\ref{eq12.16}) and the argument in Arak and
Surgailis~\cite[Theorem~4.1]{arak89}.
Let $\chi_\uparrow, \chi_\nearrow, \chi_\searrow $ denote a vertical line, an
ascending line, a descending line, respectively, in $\tilde {\Z}^2 $, with
the last two having slopes $\pi/4, -\pi/4$, respectively.
Any such line $\chi$ partitions $\tilde {\Z}^2 $ into the left part $\tilde
{\Z}^2_{\chi,-} = \bigl\{(t,x)\in \tilde {\Z}^2: t\leqslant t', x\leqslant x'
\mbox{ for some } (t',x') \in \chi \bigr\}$ (which contains the line $\chi $
itself) and the right part $\tilde {\Z}^2_{\chi,+} = \tilde {\Z}^2 \backslash
\tilde {\Z}^2_{\chi,-}$.
Note it suffices to show~(\ref{eq12.17}) for
$$
S' = S'' \cap \tilde {\Z}^2_{\chi,\pm},
$$
for any line $\chi = \chi_\uparrow, \chi_\nearrow, \chi_\searrow $ of the above
types.

When $S'= S''\cap \tilde {\Z}^2_{\chi_\uparrow,-}$
(\ref{eq12.17}) follows from the construction: in this case,
$P_{S''|S'} $ is nothing else but the evolution $\eta_t, t_0\leqslant t\leqslant
t'$ observed up to the moment $t'\leqslant t_1, (x,t') \in \chi_\uparrow $, and
therefore coincides with  $P_{S'}$.

The case $S' = S''\cap \tilde {\Z}^2_{\chi_\nearrow,-} $ follows by the
observation that $\bar \eta_{t,x}, (t,x) \in \tilde S\cap \tilde
{\Z}^2_{\chi_\nearrow,+} $ do not participate in the definition of the
probability of $\bar \eta_{t,x}, (t,x) \in S\cap \tilde {\Z}^2_{\chi_\nearrow,
-}$: the evolution of the particles after they exit through $\chi_\nearrow $ has
no effect on the evolution before they exit this line.
The case $S' = S''\cap \tilde {\Z}^2_{\chi_\searrow, -} $ is analogous.

The remaining cases are $S' = S''\cap \tilde {\Z}^2_{\chi,+},\,\chi = \chi_\uparrow, \chi_\nearrow, \chi_\searrow $.
However, the `reversibility'~(\ref{eq12.16}) allows to exchange the right and
left directions by replacing $P_S$ by the `reversed' process $\hat P_S$ and
thus reducing the problem to the cases considered above.

By consistency~(\ref{eq12.17}), the evolution of particles defined in finite
hexagonal domains can be extended to the evolution on the whole lattice $\tilde
{\Z}^2 $.
Let $\Sigma= \Sigma_{\tilde {\Z}^2} $ be the set of all configurations $\eta =
(\eta^+_{t,x}, \eta^-_{t,x}, (t,x) \in \tilde {\Z}^2 \}$
satisfying~(\ref{eq12.5}) for each $(t,x) \in \tilde {\Z}^2 $.
Then there exists a (unique) probability measure $P= P_{\tilde {\Z}^2} $ on $\Sigma$ whose restriction
$P_{|S}$ to an arbitrary hexagonal domain $S $ coincides with $P_S$:
$$
P_{|S} = P_S.
$$
Furthermore, $P$ is invariant with respect to translations of $\tilde {\Z}^2 $.

\subsection{Discrete broken line process}

In this section we shall describe the construction of the broken line process in a finite discrete hexagonal domain $S \subset \tilde {\Z}^2 $, similar to the construction for the continuous Poisson model.

When $|y -y'| = \sqrt{2}$, let $e=\langle y,y'\rangle$ denote the edge between $y$ and $y'$ and let $\E(\tilde\Z^2)$ be the set of all edges in $\tilde\Z^2$.
Given a hexagonal domain $S\subset \tilde {\Z}^2 $, let
\begin{equation}
\label{eq1sbar}
\begin{array}{rcl}
\bar S &=& S \cup \bigl\{y \in \tilde {\Z}^2: \langle y,y'\rangle \in \E(\tilde\Z^2) \mbox{ for some }y' \in S \bigr\}
,\\
\partial\bar S&=&\bar S\backslash S
,\\
\E(\bar S)&=&\bigl\{\langle y,y'\rangle\in\E(\tilde\Z^2):y\in S\bigr\}
,
\end{array}
\end{equation}
so $\bar S$ consists of all points of $S$ plus the neighboring points and $\E (\bar S)$ is the set of edges inside $\bar S$.

Given $y=(t,x) \in S$, we denote by $e^\nearrow_{y}, e^\nwarrow_{y}, e^\searrow_{y}, e^\swarrow_{y} $ the edge of $\tilde {\Z}^2 $ incident with $y$ and lying in northeast, northwest, southeast, southwest direction, respectively, so that $e^\nearrow_{t,x} = e^\swarrow_{t+1,x+1}, e^\nwarrow_{t,x} = e^\searrow_{t-1,x+1}$.
We shall consider configurations $\bar \eta \in \bar \Sigma_S$ as
\begin{equation}
\label{eq12.20}
\bar \eta = \bigl(\eta (e), e\in \E (\bar S)\bigr),
\end{equation}
where $\eta (e)$ is the number of particles which pass through the edge $e$; $\eta (e) = \eta^+_{y}, \eta^-_{y}, \zeta^+_{y}, \zeta^-_{y} $ for $e = e^\nearrow_{y}, e^\swarrow_{y}, e^\swarrow_{y}, e^\nwarrow_{y}$,
respectively.
This field $\bar\eta$ is what we shall call \emph{flow field}.

With each configuration $\bar \eta = (\zeta^\circ, \eta) \in \bar \Sigma_S$, one can associate a finite partially ordered system $\{\gamma_j, j=1, \dots, M \} $ of broken lines in $\bar S$, such that for any point $y \in \tilde {\Z}^2 $, the relations
\begin{equation}
\nonumber
\nu (e) = \eta (e), \quad e\in \E(\bar S).
\end{equation}
hold, where $\nu (e)$ is the number of broken lines $\gamma_i$ which pass through a given edge $e$ of $\tilde {\Z}^2 $, and where $\eta^+_{y}, \eta^-_{y}, \zeta^+_{y}, \zeta^-_{y} $ are related by~(\ref{eq12.3})-(\ref{eq12.4}), and denote the respective numbers of outgoing and incoming particles to a given site $y \in S$, as in Section~\ref{sec1evolution}.

A tuple $\ell$ of the form $(y_0,e_1,y_1,e_2,y_2,\dots,e_n,y_n)$ will be called a \emph{broken trace} if $\ell$ starts at the bottom part of $\partial\bar S$, remains in $S$, ends at the top part of $\partial\bar S$, and satisfies
\begin{equation}
\nonumber
\begin{array}{l}
e_i=\langle y_{i-1},y_i\rangle \\
x_i = x_{i-1}+1\\
t_i = t_{i-1}\pm 1
\end{array}.
\end{equation}

The broken lines for a given configuration $\bar \eta \in \bar {\E}_S$ can be constructed as follows.
For each particle passing through a given edge $e\in E(\bar S)$, we define a label $p\in\{1,\dots,\eta(e)\}$, which is interpreted as the \emph{relative age} of that particle among all particles which pass through the same edge.
The particle whose relative age is $p=1$ is the oldest and that with $p=\eta(e)$ is the youngest.

This way, we order coherently moving particles.
Any particle is characterized by a pair
\begin{equation}
\nonumber
  (e, p), \quad e\in \E(\bar S), \,\,p=1,2, \dots, \eta (e).
\end{equation}
We now define a relation between labeled particles $(e_1,p_1), (e_2, p_2)$ on two adjacent edges $e_1, e_2 \in \E(\bar S)$, which we denote by
\begin{equation*}
(e_1,p_1) \sim (e_2,p_2).
\end{equation*}
If relation $(e_1,p_1)\sim(e_2,p_2)$ holds, we say that they are \emph{associated}, or \emph{belong to the same generation}.
Suppose $e_1, e_2$ are any two edges $e^\nearrow_{y}, e^\nwarrow_{y}, e^\searrow_{y}, e^\swarrow_{y} $ incident with some $y \in S$.
The relation $(e_1,p_1) \sim (e_2,p_2)$ is defined in the following cases:

Case 1:
$e_1 = e_y^\swarrow$, $e_2 = e_y^\nwarrow$;
$p_1,p_2 \leqslant \eta(e_1)\wedge\eta(e_2)$

Case 2:
$e_1 = e_y^\searrow$, $e_2 = e_y^\nwarrow$;
$\eta(e_2) > \eta (e^\swarrow_{y})$, $p_2 > \eta (e^\swarrow_{y})$.

Case 3:
$e_1 = e_y^\swarrow$, $e_2 = e_y^\nearrow$;
$\eta(e_1) > \eta (e^\nwarrow_{y})$, $p_1 > \eta (e^\nwarrow_{y})$.

Case 4:
$e_1 = e_y^\searrow$, $e_2 = e_y^\nearrow$;
$\eta(e_i)-\xi_y < p_i \leqslant \eta(e_i)$.

Namely, $(e_1,p_1) \sim (e_2,p_2)$ holds if and only if

Case 1:
$p_2 = p_1$;

Case 2:
$p_1 = p_2 - \eta(e_y^\swarrow)$;

Case 3: 
$ p_2 = p_1 - \eta(e_y^\nwarrow)$;

Case 4:
$ \eta(e_1) - p_1 = \eta(e_2) - p_2$.

According to the above rules, associated particles are either the particles which annihilate each other at $y$ (Case 1), or the particles born at $y$ and moving into opposite directions (Case 4), or, as in Cases 2 and 3, we associate a younger incoming particle which is not killed at $y$ with the corresponding older outgoing particle (both of them move in the same direction).

We define a \emph{broken line} as a tuple $\gamma = (y_0,e_1,p_1,y_1,e_2,p_2,y_2,\dots,e_n,p_n,y_n)$, $n\geqslant1$, such that $\ell(\gamma) := (y_0,e_1,y_1,e_2,y_2,\dots,e_n,y_n)$ is a broken trace.
Now, we say that $\gamma = (y_0,e_1,p_1,y_1,e_2,p_2,y_2,\dots,e_n,p_n,y_n)$ is \emph{a broken line in a given configuration} $\bar \eta $ if any two adjacent edges of this path are associated, that is, they belong to the same generation.
More precisely, this condition means that $(e_{j},p_{j}) \sim (e_{j+1},p_{j+1})$ for $j=0,\dots,n$.

Let $\{\gamma_j, j=1, \dots, M \} $ be the family of all broken lines in a given configuration $\bar \eta $.
The corresponding family $\{\ell_j,j=1,\dots, M\} $ is well ordered by the relation $\preceq $ defined above, with the possible exception of broken lines that exit $S$ at its left or right boundaries, in which case two lines may not be ordered.
This follows from the definition of broken line and the fact that different pairs of associated particles cannot `cross' each other: if $ (e'_1, p'_1) \sim (e'_2, p'_2), \,(e''_1, p''_1) \sim (e''_2, p''_2) $ and $e'_1,e'_2, e''_1, e''_2 $ are all incident with the same vertex $y $, then the paths $(e'_1,e'_2) $ and $(e''_1, e''_2)$ cannot cross each other, in other words, they cannot lie on different lines intersecting at $y$.

\section{The continuum broken line process}
\label{sec1continuumbl}

In this section we present a natural generalization of the geometric broken line process, which we call the continuum broken line process.
In the continuum framework, instead of having a certain number $\xi\in\Z_+$ of pairs of particles being born at each site, we have a mass $\xi\in\R_+$.
In this case a broken line is described not only by the sites it occupies, but it also has some thickness.
The continuum broken lines are not determined by sequentially associating pairs $(e,p)$, we rather need to associate mass, and they cannot be understood by just following the mass associations over and over, as this mass may well branch due to the association rules.
For this reason we always consider broken lines of finite size, which suffices for the description of the process.

Some applications to last passage percolation are shown in Section~\ref{sec1appl}, where the proofs of Theorem~\ref{theo1exp} and Theorem~\ref{theo1llngeo} illustrate how this process can be useful.

Notation will often be abused in the sense that a different symbol can refer to an object of a given type, or to the object of that type that is obtained from other related objects, etc., but this should not give rise to any confusion.

\subsection
{Flow fields and self-duality}

Let $S$ be a fixed hexagonal domain in $\tilde\Z^2$ and recall the definitions from~\eqref{eq1sbar}.
We call $S$ a \emph{rectangular domain} if it is a degenerate hexagonal domain, i.e., $\#\partial_\pm S=\#\partial^\pm S=1$.
It is convenient to define,
for a rectangular domain $S$, the boundaries
$\partial_\pm\bar S = \bigl\{(t,x)\in\partial\bar S:(t+1,x\mp1)\in S\bigr\}$ and
$\partial^\pm\bar S = \bigl\{(t,x)\in\partial\bar S:(t-1,x\mp1)\in S\bigr\}$.

Consider $\{\xi_y\}_{y\in S}$, $\xi_y\geqslant0\ \forall {y\in S}$, the
\emph{particle birth process} in $S$.
$\zeta^\circ=(\zeta^\circ_+,\zeta^\circ_-)$ is the \emph{boundary condition}, or the \emph{particle flow entering} $S$.
One takes $\eta_y^\pm \geqslant0,\ y\in S$, the particle flow inside $S$ and exiting $S$, defined by~(\ref{eq12.3})-(\ref{eq12.4}).

Define $\bar\eta = \bigl\{\eta(e)\geqslant 0,\ e\in{\E}(\bar S)\bigr\}$,
called the \emph{flow field} in $\E(\bar S)$ associated with
$(\zeta^\circ,\xi)$.
As in the discrete case, there is a 1-1 correspondence between
$(\zeta^\circ,\xi)$, $(\zeta^\circ,\eta)$, and $\bar\eta$.
We shall write $\bar\eta(\zeta^\circ,\xi)$ to denote the flow field $\eta$ corresponding to the birth process $\xi$ and the flow $\zeta^\circ$ entering the boundary of $S$ as defined in~\eqref{eq12.20}, and analogously for $\bar\eta(\zeta^\circ,\eta)$.

In general, any nonnegative field $\bar\eta$ that satisfies
\begin{equation}
 \label{eq1lcons}
 n^+-n^-=m^+-m^-
\end{equation}
for all vertex $y$, where $n^+=\eta(e_y^\swarrow)$, $n^-=\eta(e_y^\nwarrow)$,
$m^+=\eta(e_y^\nearrow)$ and $m^-=\eta(e_y^\searrow)$, is a flow field.
Such a flow field may be defined either on a hexagonal domain $\E(\bar S)$ or on
all $\E(\tilde\Z^2)$.
In the former case there always exists a unique pair $(\zeta^\circ,\xi)$ such
that $\bar\eta=\bar\eta(\zeta^\circ,\xi)$.
In the latter case such an expression does not make sense, although it is possible
to determine $\xi(\bar\eta)$ for a given flow field $\bar\eta$.

\paragraph{Self-duality}

In the previous section we showed that the distribution $P_S$ of the configuration $\bar\eta$ was self-dual when the distributions of $\zeta^+$, $\zeta^-$, and $\xi$ imply (\ref{eq12.15}).

More generally, denoting by $\pi_1,\pi_2,\pi_3$ the distributions of $\zeta^+$, $\zeta^-$, and $\xi$, respectively, self-duality is determined by the following relation
\begin{equation}
\label{eq1lrev}
 \begin{array}{c}
  \pi_1(\dd m^+)\pi_2(\dd m^-)q(\dd n^+,\dd n^-|m^+,m^-) = \\ = \pi_1(\dd
n^+)\pi_2(\dd n^-)q(\dd m^+,\dd m^-|n^+,n^-),
 \end{array}
\end{equation}
where $q$ is given by $q(n^+ \wedge n^-=\dd t|m^+,m^-)=\pi_3(\dd t)$ and $q\left(\{(n^+,n^-):\mbox{(\ref{eq1lcons}) holds}\}\big|m^+,m^-\right)=1$.

Thus any triple $\pi_1,\pi_2,\pi_3$ that satisfies (\ref{eq1lrev}) will define a family of measures $\{P_S:S\mbox{ hexagonal domain}\}$ which is consistent, i.e., satisfies~(\ref{eq12.17}).
In this case $\bar\eta$ can be consistently extended to ${\E}(\tilde \Z^2)$.
As another consequence, if one takes $(\zeta^+_y)_{y\in\partial_-S}$ i.i.d.\ distributed as $\pi_1$, $(\zeta^-_y)_{y\in\partial_+S}$ i.i.d.\ having law $\pi_2$, and $(\xi_y)_{y\in S}$ i.i.d.\ with law $\pi_3$, then $(\eta_y^+)_{y\in\partial^+S}$ will be i.i.d.\ with law $\pi_1$ and independent of $(\eta_y^-)_{y\in\partial^-S}$, which will be i.i.d.\ with law $\pi_2$.

Below we characterize the distributions that satisfy~(\ref{eq1lrev}).
Since the particle system always evolves by keeping the relation (\ref{eq1lcons}), we can parametrize the corresponding hyperplane in $\R_+^4$ defining $T:\R_+^3\to\R_+^4$ by
\begin{equation}
\nonumber
(m^+,m^-,n^+,n^-)=T(r,s,t):=(r,s,t+[r-s]^+,t+[r-s]^-).
\end{equation}
The joint distribution of $(m^+,m^-,n^+,n^-)$ is given by the left-hand side of (\ref{eq1lrev}), and it can be obtained by taking the image $T\mu$, where $\mu = \pi_1 \times \pi_2 \times \pi_3$.
Now (\ref{eq1lrev}) just means that $T\mu$ is preserved by the operator $L$ in $\R_+^4$ given by $L(x,y,z,w)=(z,w,x,y)$.
This is equivalent to the fact that
\begin{equation}
 \label{eq1lmpres}
  R\mu = \mu,
\end{equation}
where $R=T^{-1}LT$.
Writing $R$ explicitly gives
\[
(r',s',t')=R(r,s,t):=(t+[r-s]^+,t+[r-s]^-,r\wedge s).
\]

It is straightforward to check that~(\ref{eq1lmpres}) is satisfied when $\pi_i=\exp(\alpha_i)$ with $\alpha_3=\alpha_1+\alpha_2$ or $\pi_i=\mathrm{Geom}(\lambda_i)$ with $\lambda_3=\lambda_1\lambda_2$, of which the geometric broken line process described in the previous section is a particular case.
The following theorem says that these are the only non-trivial examples.
Denote the support of a Borel probability $\pi$ by $\mathrm{supp}(\pi)=\bigcap\{U:{U\subseteq \R \mbox{ closed }, \pi(U)=1}\}$.
\begin{theorem}
 \label{theo2dualgeoexp}
 Let $\pi_1,\pi_2,\pi_3$ be probability distributions in $\R_+$, such that $\mu = \pi_1 \times
\pi_2 \times \pi_3$ satisfies~\eqref{eq1lmpres}.
 Take $\tilde U_i=\mathrm{supp}(\pi_i)$, $t_0=\min\tilde U_3$, $U_i=\tilde U_i-t_0$ and $F_i(t)= \pi_i\big\{[t_0 + t,\infty)\big\}$.
 Then one of the following holds:
\begin{enumerate}
 \item  $U_3=\{0\}$.
 \item $U_1=U_2=U_3=\R_+$, $F_i(t)=\exp(-\alpha_i t)$ for some $\alpha_1,\alpha_2\in(0,\infty)$ and $\alpha_3=\alpha_1+\alpha_2$.
 \item $U_1=U_2=U_3=c\Z_+$ for some $c>0$, $F_i(cn)=\lambda_i^n$ for some $\lambda_1,\lambda_2\in(0,1)$ and $\lambda_3=\lambda_1\lambda_2$.
\end{enumerate}
\end{theorem}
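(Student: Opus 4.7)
\emph{Proof proposal.} The plan is to reformulate $R\mu=\mu$ as a classical characterization and then classify via the semigroup structure of the common support. The key change of variables is $(u,v,w,\sigma) := (t,\,r\wedge s,\,|r-s|,\,\mathrm{sgn}(r-s))$, under which a direct computation shows that $R$ acts simply as the swap $(u,v)\mapsto(v,u)$ while fixing $w,\sigma$. Since $u=t$ is independent of $(v,w,\sigma)$ under $\mu=\pi_1\times\pi_2\times\pi_3$, the invariance $R\mu=\mu$ is equivalent to the joint law of $(u,v,w,\sigma)$ being symmetric in $(u,v)$, which in turn is equivalent to the two conditions: (a) $v=X_1\wedge X_2\sim\pi_3$, i.e., $\bar F_3=\bar F_1\bar F_2$, and (b) $X_1\wedge X_2$ is independent of $X_1-X_2$, where $X_i\sim\pi_i$ are independent and $\bar F_i(x):=\pi_i([x,\infty))$. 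This brings the statement into the realm of classical characterizations of the exponential law.

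Restricting to $\{\sigma=+\}=\{X_1>X_2\}$ and using (b), I would derive the Radon-Nikodym identities
\begin{equation*}
\pi_3(dv)\,=\,c_1\,\bar F_1(v)\,\pi_2(dv)\,=\,c_2\,\bar F_2(v)\,\pi_1(dv),
\end{equation*}
with $c_1=1/P(X_1>X_2)$ and $c_2$ analogous. Together with (a) these force $\min\tilde U_1=\min\tilde U_2=\min\tilde U_3=t_0$. Since $R$ commutes with the diagonal shift $(r,s,t)\mapsto(r+t_0,s+t_0,t+t_0)$, I may shift everything by $-t_0$ and assume $t_0=0$. If $\pi_3=\delta_0$ we are in Case~1. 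Otherwise, substituting the Radon-Nikodym identities back into the full symmetry relation and canceling $\pi_2(du)\pi_2(dv)$ gives
\begin{equation*}
\bar F_1(u)\,\pi_1(v+dw)\,=\,\bar F_1(v)\,\pi_1(u+dw)\quad\text{on } w>0,\;\text{for $\pi_2$-a.e.\ }u,v.
\end{equation*}
Integrating in $w$ over $(y,\infty)$ and then letting $v$ approach $0$ along $\mathrm{supp}(\pi_2)$, one-sided continuity of $\bar F_1$ yields the memoryless identity
\begin{equation*}
\bar F_1(u+y)\,=\,\bar F_1(u)\,\bar F_1(y),\qquad u\in U:=\mathrm{supp}(\pi_2),\;y>0,
\end{equation*}
and likewise for $\bar F_2$.

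This memoryless identity says that for every $u\in U$ the conditional law of $X_1-u$ given $X_1\geqslant u$ equals $\pi_1$, forcing the self-similarity $U\cap[u,\infty)=u+U$; in particular $U$ is a closed additive subsemigroup of $[0,\infty)$ containing $0$, and the Radon-Nikodym identities give $\mathrm{supp}(\pi_1)=\mathrm{supp}(\pi_2)=\mathrm{supp}(\pi_3)=U$. A standard classification then shows $U\in\{\{0\},\,[0,\infty),\,c\Z_+\}$ for some $c>0$: if $\inf(U\setminus\{0\})=0$, finite sums of arbitrarily small elements of $U$ are dense in $[0,\infty)$ and closedness forces $U=[0,\infty)$; if $c:=\inf(U\setminus\{0\})>0$, self-similarity forces $U=c\Z_+$. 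On $[0,\infty)$ multiplicativity together with one-sided continuity of $\bar F_1$ yields $\bar F_1(x)=e^{-\alpha_1 x}$; on $c\Z_+$ it yields $\bar F_1(cn)=\lambda_1^n$; analogous statements hold for $\bar F_2,\bar F_3$, and the relations $\alpha_3=\alpha_1+\alpha_2$ and $\lambda_3=\lambda_1\lambda_2$ both come from $\bar F_3=\bar F_1\bar F_2$, with non-degeneracy $\alpha_i\in(0,\infty)$, $\lambda_i\in(0,1)$ following from $\pi_i$ being a probability measure distinct from $\delta_0$. The main obstacle I foresee is justifying the memoryless identity in full generality (allowing $\pi_i$ to have arbitrary atomic and continuous parts): one must verify that the Radon-Nikodym factorizations apply in the measure-theoretic setting and that the passage from a $\pi_2$-null-complement set to all of $U$ uses only one-sided continuity of survival functions rather than any smoothness, after which the semigroup classification and the derivation of the explicit forms of $\bar F_i$ are routine.
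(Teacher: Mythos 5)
Your change of variables $(u,v,w,\sigma)=(t,\,r\wedge s,\,|r-s|,\,\mathrm{sgn}(r-s))$ is correct --- one checks directly that $R$ becomes the transposition of $u$ and $v$ --- and the resulting equivalence of $R\mu=\mu$ with (a) $X_1\wedge X_2\sim\pi_3$ and (b) $X_1\wedge X_2$ independent of $X_1-X_2$ is also correct (integrate the symmetrized identity $\pi_3(du)P(dv,dw,d\sigma)=\pi_3(dv)P(du,dw,d\sigma)$ first over everything but $v$, then over $v$ alone). This is a genuinely different route from the paper's: the paper never passes through this probabilistic reformulation, but instead extracts the functional equation $f_1(s)F_2(r+s)f_3(0)=f_1(0)F_2(r)f_3(s)$ (and its $\epsilon$-smeared continuous analogue) directly from specific instances of $R\mu=\mu$, and separately bootstraps the support identities ($0\in U_i$, unboundedness, $U\cup(-U)$ a closed subgroup). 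Your reformulation buys conceptual clarity --- it identifies the statement as a Ferguson--Crawford-type characterization of the exponential/geometric laws --- at the cost of more measure-theoretic bookkeeping in the Radon--Nikodym step; the paper's version is more pedestrian but each step is an elementary manipulation of survival functions. The endgame (closed subsemigroup of $\R_+$ is $\{0\}$, $\R_+$ or $c\Z_+$; Cauchy's equation for a monotone function) is common to both.

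The one place where your sketch is genuinely short of a proof is the possibility of an atom at $0$ in the continuous case. Your passage to the memoryless identity sends $v\to0$ along $\mathrm{supp}(\pi_2)$ and uses $\bar F_1(v)\to1$; but $P(X_1\geqslant v)\to P(X_1>0)=1-\pi_1(\{0\})$ as $v\downarrow0$, so what you actually obtain is $\bar F_1(u)\,\bar F_1^{>}(y)=(1-\pi_1\{0\})\,\bar F_1^{>}(u+y)$ with $\bar F_1^{>}(x)=P(X_1>x)$, i.e.\ the multiplicative equation \emph{up to a constant}, whose monotone solutions are $q\,e^{-\alpha x}$ with $q\leqslant1$. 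A mixture of an atom at $0$ with an exponential is ``distinct from $\delta_0$'', so your closing claim that non-degeneracy follows from $\pi_i\neq\delta_0$ does not dispose of this case; the paper needs a separate argument here, namely $(1-p)(1-q)=\mu\{r=s\}=\mu\{t=0\}=1-pq$, forcing $p=q=1$. The same exclusion is available inside your framework --- atoms of both $\pi_1$ and $\pi_2$ at $0$ make the event $\{\sigma=0\}$ coincide with a sub-event of $\{\min=0\}$, violating the independence in (b) unless one of the atoms vanishes, and a single surviving atom is then ruled out by $P(\sigma=+\mid\min=0)=1\neq P(\sigma=+)$ --- but it must be carried out explicitly; the related $\geqslant$ versus $>$ discrepancy in your identity $\pi_3(dv)=c_1\bar F_1(v)\pi_2(dv)$ (which should read $P(X_1>v)$ on the event $\{X_1>X_2\}$) is part of the same issue. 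With that step supplied, the argument goes through.
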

\begin{proof}

It follows from~\eqref{eq1lmpres} that $\min(\tilde U_1 \cup \tilde U_2)= t_0$, and so we can assume without loss of generality that $t_0=0$ and $\tilde U_i=U_i$.
If $U_1=\{0\}$ or $U_2=\{0\}$, by~\eqref{eq1lmpres} $U_3=\{0\}$ and we are done.
Otherwise take $u_1,u_2>0$ in $U_1$ and $U_2$ respectively, and write $u_0=u_1\wedge u_2>0$.
It follows from~\eqref{eq1lmpres} that $u_0\in U_3$.

We show that $U_1$ and $U_2$ contain $0$ and are unbounded;
this together with~\eqref{eq1lmpres} implies that $U_1=U_2=U_3$, which we denote by $U$.
As $0\in U_1\cup U_2$, let us assume that $0\in U_1$.
To see that $U_2$ is unbounded, notice that $(0,u_2,u_0)\mapsto (u_0,u_2+u_0,0)$, thus $u_2+u_0 \in U_2$, and again $u_2+2u_0\in U_2$, and so on.
To see that $0\in U_2$, notice that $(u_0,u_2,0)\mapsto(0,[u_2-u_0]^+,u_0)$, thus $[u_2-u_0]^+\in U_2$, and again $[u_2-2u_0]^+\in U_2$, and so on.
Once $0\in U_2$, the previous argument implies that $U_1$ is unbounded.

Now notice that $(U\cup -U)$ is a subgroup of $(\R,+)$.
Indeed, if $u,v\in U$, then it follows from~\eqref{eq1lmpres} that $|u - v|\in U$ as $(u,v,0)\in U^3$, and it also follows that $u+v\in U$ as $(u,0,v)\in U^3$.
But $U$ is closed, so either $U=\R_+$ or $U=c\Z_+$ and without loss of generality $c=1$.

The case $U=\Z_+$ is simpler.
Write $f_i(n)=F_i(n)-F_i(n+1)=\pi_i(\{n\})>0$.
From~\eqref{eq1lmpres} we have $f_1(s)F_2(r+s)f_3(0)=f_1(0)F_2(r)f_3(s)$, so $\frac{F_2(r+s)}{F_2(r)}=\frac{f_1(0)f_3(s)}{f_1(s)f_3(0)}$ does not depend on $r$.
Therefore $F_2$ is exponential, and by symmetry the same is true for $F_1$.
The result follows since $F_3(t)=F_1(t)F_2(t)$ by~\eqref{eq1lmpres}.

Finally suppose $U=\R_+$.
Write $f_i^\epsilon(s)=F_i(s)-F_i(s+\epsilon)=\pi_i[s,s+\epsilon)>0$.
There is a dense set in $\R_+$ where the $F_i$ are continuous, which we call good points.
Let $s,t>\epsilon>0$.
From~\eqref{eq1lmpres} we get
$
  f_1^\epsilon(s) F_2(r+s+\epsilon) f_3(0)
  \leqslant
  f_1^\epsilon(0) F_2(r) f_3(s)
  \leqslant
  f_1^\epsilon(s) F_2(r+s-\epsilon) f_3(0),
$
so
$
  \frac {F_2(r+s+\epsilon)}{F_2(r)}
  \leqslant
  \frac {f_1^\epsilon(0) f_3(s)}{f_1^\epsilon(s) f_3(0)}
  \leqslant
  \frac {F_2(r+s-\epsilon)}{F_2(r)}.
$
The ratio $\frac {F_2(r+s)}{F_2(r)}$ is thus independent of $r$ as long as $r+s$ is a good point.
As $F_2$ is a decreasing function, there are $\alpha_2>0$ and $q\in(0,1]$ such that $F_2(r)=q\exp(\alpha_2r)$ for $r>0$.
Analogously, there are $\alpha_1>0$ and $p\in(0,1]$ such that $F_1(r)=p\exp(\alpha_1r)$ for $r>0$.
Finally notice that the only possible singleton of $\pi_i$ is $0$, so $(1-p)(1-q)=\mu\{(r,s,t):r=s\}=\mu\{(r,s,t):t=0\}=1-pq$; whose unique solution in $(0,1]$ is $p=q=1$.
\end{proof}

\subsection{Construction of the broken lines}

Let $S$ be a fixed hexagonal domain in $\tilde\Z^2$.
For a given flow field $\bar \eta$ in $\E(\bar S)$, we shall define a process of lines whose elementary constituents, called \emph{atoms}, are pairs of the form $(e,p),e\in \E(\bar S), p\in\bigl(0,\eta(e)\bigr]$.

We associate two atoms and write $(e_1,p_1)\sim(e_2,p_2)$ according to the
following rules:

Case 1: $e_1 = e_y^\swarrow$, $e_2 = e_y^\nwarrow$;
$p_1\in\bigl(0,\eta(e_1)\wedge\eta(e_2)\bigr],\ p_2 = p_1$;

Case 2: $e_1 = e_y^\searrow$, $e_2 = e_y^\nwarrow$;
$p_2\in\bigl(\eta(e_y^\swarrow),\eta(e_2)\bigr],\ p_1 = p_2 -
\eta(e_y^\swarrow)$;

Case 3: $e_1 = e_y^\swarrow$, $e_2 = e_y^\nearrow$;
$p_1\in\bigl(\eta(e_y^\nwarrow),\eta(e_1)\bigr],\ p_2 = p_1 -
\eta(e_y^\nwarrow)$;

Case 4: $e_1 = e_y^\searrow$, $e_2 = e_y^\nearrow$;
$p_i\in\bigl(\eta(e_i)-\xi_y,\eta(e_i)\bigr],\ \eta(e_1) - p_1 = \eta(e_2) -
p_2$;

Notice that, given a vertex $y$, each atom standing at an edge incident to $y$
from above is associated with exactly one atom standing at another edge incident
to $y$ from below and vice versa.
Notice also and that `$\sim$' is not transitive.

By $J$ we will always denote an interval of the form $(p,p']$ and $|J|=p'-p$.
When $p\geqslant p'$, $J=\emptyset$ and $|J|=0$.
We associate two intervals of atoms standing at adjacent edges and write
$(e_1,J_1)\sim(e_2,J_2)$ if for all $p_1\in J_1$ there is $p_2\in J_2$ such that
$(e_1,p_1)\sim(e_2,p_2)$ and vice versa.
Notice that in this case $|J_1|=|J_2|$.

We define a \emph{broken line} as a tuple $\gamma =
(y_0,e_1,J_1,y_1,e_2,J_2,y_2,\dots,e_n,J_n,y_n)$, $n\geqslant1$, such that
\begin{equation}
 \label{eq1lbroktr}
\begin{array}{l}
e_i=\langle y_{i-1},y_i\rangle \\
x_i = x_{i-1}+1\\
t_i = t_{i-1}\pm 1
\end{array},
\end{equation}
and $y_i=(t_i,x_i)$ for $i=1,\dots,n$, and such that $|J_1|=|J_2|=\cdots=|J_n|$.
If $J_i=\emptyset$ we identify $\gamma=\emptyset$.

We define next the \emph{weight} of a broken line $w(\gamma)$.
If $\gamma\ne\emptyset$, we put $w(\gamma) = |J_1|>0$, otherwise we let
$w(\gamma)=0$.

For $\gamma = (y_0,e_1,J_1,y_1,e_2,J_2,y_2,\dots,e_n,J_n,y_n)$ we define what is
called its \emph{trace} by $\ell(\gamma) = (y_0,e_1,y_1,e_2,y_2,\dots,e_n,y_n)$.
In general, any tuple $\ell$ satisfying (\ref{eq1lbroktr}) will be called a
\emph{broken trace}.
Since either $(y_0,\dots,y_n)$ or $(e_1,\dots,e_n)$ are sufficient to determine
$\ell$, we shall refer to any of these representations without distinction.

The domain of a broken line $\gamma$ or a broken trace $\ell$ satisfying~\eqref{eq1lbroktr} is given by $D(\gamma)=D(\ell)=\{x_0,\dots,x_n\}$.
Since for each $x \in D(\gamma)$ there is a unique  $t$ such that $(t,x) \in \gamma$, we denote such $t$ by $t(x)$.
We set $I(\ell)=\{t(x)\colon x \in D(\ell)\}$.
It follows from~\eqref{eq1lbroktr} that $D(\ell)$ and $I(\ell)$ are convex.
(Notice that we abuse the symbol `$\in$' since $\gamma$ is a tuple instead of a set.)

We write $\ell\subseteq \bar S$ if $y_0\in S\cup\partial_- \bar S\cup\partial^-
\bar S$, $y_n\in S\cup\partial_+ \bar S\cup\partial^+ \bar S$ and
$y_1,\dots,y_{n-1}\in S$, or, equivalently, if $e_1,\dots,e_n\in\E(\bar S)$.
We say that $\ell$ \emph{crosses} $S$ if $\ell\subseteq \bar S$ and
$y_0,y_n\in\partial\bar S$. Let $C(S)=\{\ell\subseteq\bar S:\ell \mbox{ crosses
} S\}$.
We write $\gamma\subseteq\bar S$ if $\ell(\gamma)\subseteq\bar S$ and we say
that $\gamma$ \emph{crosses} $S$ if $\ell(\gamma)$ crosses $S$.

We say that the broken line $\gamma\subseteq\bar S$ is associated with a flow
field $\bar\eta$ defined on $\E(\bar S)$ if $(e_{i-1},J_{i-1})\sim(e_i,J_i)$ for
$i=1,\dots,n$. $B(\bar\eta)$ will denote the set of all broken lines associated
with $\bar\eta$.

For $V\subseteq\tilde\Z^2$, and $\ell=(y_0,\dots,y_n)$, define $\ell\cap V =
\{y_i:y_i\in V,i=0,\dots,n\}$.

For a given broken line $\gamma$ associated with the field
$\bar\eta(\zeta^\circ,\xi)$, its left corners $(t,x)$ correspond to part of the
particle birth $\xi_{t,x}$ at $(t,x)$.
Given a broken trace $\ell$, we denote by $L(\ell)$ the set of left corners of
$\ell$, i.e., the points $(t,x)\in \ell$ such that $(t+1,x\pm1)\in\ell$.
Also let $L(\gamma)=L(\ell(\gamma))$.

We define the fields $\xi(\ell)$ and $\xi(\gamma)$ in $\tilde\Z^2$ by
\begin{equation}
 \label{eq1lxidef}
 [\xi(\ell)]_y =\I_{L(\ell)}(y)
\end{equation}
and $\xi(\gamma) = w(\gamma) \I_{L(\gamma)}$.

In the same fashion, the extremal points of a broken line that crosses $S$
correspond to a particle flow entering or exiting $S$.
So we also define
$$
  [\zeta^+(\ell)]_{t,x} = 
  \begin{cases}
    1, & (x,t)\in \ell\cap S \mbox{ and } (t-1,x-1)\in\ell\cap\partial\bar S, \\ 
    0, & \mbox{otherwise},
  \end{cases}
$$
$$
  [\zeta^-(\ell)]_{t,x} = 
  \begin{cases}
   1, & (x,t)\in \ell\cap S \mbox{ and }(t-1,x+1)\in\ell\cap\partial\bar S,  \\ 
   0, & \mbox{otherwise},
  \end{cases}
$$
the corresponding $\zeta^\circ$, and denote it by $\zeta^\circ(\ell)$.
Define $\zeta^\circ(\gamma) = w(\gamma)\zeta^\circ(\ell(\gamma))$.
Also define $\bar\eta(\ell)$ by $\bar\eta(\ell)(e)=\I_{e\in\ell}$ and
$\bar\eta(\gamma)=w(\gamma)\bar\eta(\ell(\gamma))$.
Notice that by construction $\bar\eta(\ell)=\bar\eta\bigl(\zeta^\circ(\ell),\xi(\ell)\bigr)$ for $\ell\in
C(S)$, and analogously for $\gamma$.

Let $\bar\eta$ be given and fix some
$\ell=(y_0,e_1,y_1,e_2,y_2,\dots,e_n,y_n)\subseteq\bar S$.
There is one maximal broken line that has trace $\ell$ and is associated with
the field $\bar\eta$, which will be denoted $\gamma(\ell)$, the dependence on
$\bar\eta$ is omitted.
By this we mean that there exist unique $J_1, J_2, \dots,J_n$ such that
$\gamma(\ell)=(y_0,e_1,J_1,y_1,\dots,e_n,J_n,y_n) \in B(\bar\eta)$ and such that
any $p_1,p_2,\dots,p_n$ with property
$(e_1,p_1)\sim(e_2,p_2)\sim\cdots\sim(e_n,p_n)$ must satisfy $p_i\in J_i,\
i=1,\dots,n$.
The proof of this fact is shown in Appendix~\ref{sec1maximal}.

Let $w(\ell)$ denote the maximum weight of a broken line in $B(\bar\eta)$ that
has trace $\ell$, which is given by $w\bigl(\gamma(\ell)\bigr)$.
The dependence on the field $\bar\eta$ is omitted when it is clear which field
is being considered, otherwise we shall write $w_{\eta}(\ell)$.
Notice that $D\bigl(\gamma(\ell)\bigr) = D(\ell)$ if $w_\eta(\ell)>0$, and
$D\bigl(\gamma(\ell)\bigr) =\emptyset$ otherwise.

We write $\ell\subseteq\ell'$ if $D(\ell)\subseteq D(\ell')$ and $t(x)=t'(x)$ for all $x\in D(\ell)$.

Notice that $w(\ell_1) \geqslant w(\ell_2)$ when $\ell_1\subseteq\ell_2$.
This is due to the successive branching caused by birth/collision that makes
longer lines become thinner.
It is even possible that for
$\ell_1\subseteq\ell_2\subseteq\cdots\subseteq\ell_n\subseteq\cdots$ we have
$w(\ell_n)\downarrow0$ and the limiting object could have one atom but null
weight.

For broken traces $\ell,\ell'$, we write $\ell \succeq \ell'$ if $\ell$ is to
the right of $\ell'$.
This means that $t(x)\geqslant t'(x)$ for all $x\in D(\ell)\cap D(\ell')$ and
that $t(x)\geqslant t'(x')$ for some $x\in D(\ell),x'\in D(\ell')$.
(The last condition makes sense in case $D(\ell)\cap D(\ell')=\emptyset$.)
In general this relation is neither antisymmetric nor transitive.
Write $\ell \succ \ell'$ if $\ell \succeq \ell'$ and $\ell \ne \ell'$.

\begin{lemma}
\label{lemma1lorder}
 If $S$ is a rectangular domain, the following assertions hold:
\begin{enumerate}
 \item \label{item1partorder}
  Relation $\succeq$ restricted to $\bigl\{\ell \in C(S)\bigr\}$ is a partial
  order.
 \item \label{item1ellextreme}
  The elements of $C(S)$ are extremal in the following sense.
  If $\ell\in C(S)$, $\ell'\subseteq\bar S$ and $\ell\subseteq\ell'$, then
$\ell=\ell'$.
 \item \label{item1useless}
  Relations $\subseteq$ and $\succeq$ have the following concavity property.
  Suppose that $\ell\subseteq\ell_1$ and $\ell\subseteq\ell_3$ for some
$\ell\subseteq\bar S$ and $\ell_1\preceq\ell_2\preceq\ell_3\in C(S)$; then
$\ell\subseteq\ell_2$.
\end{enumerate}
\end{lemma}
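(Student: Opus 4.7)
The plan is to pass to rotated coordinates $(u,v)=((t+x)/2,(x-t)/2)$, in which the rectangular $S$ becomes an axis-aligned rectangle $[u_{\min},u_{\max}]\times[v_{\min},v_{\max}]$ and a broken trace becomes a monotone lattice path stepping by $(+1,0)$ or $(0,+1)$. The key structural observation I will rely on throughout is that every vertex $y\in\partial\bar S$ has exactly one edge of $\E(\bar S)$ incident to it, namely the one stepping into $S$. This is verified by inspecting the four borders of $\bar S$ separately, using the definition $\E(\bar S)=\{\langle y,y'\rangle:y\in S\text{ or }y'\in S\}$ to rule out edges whose endpoints both lie in $\partial\bar S$. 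Consequently $y$ can only occur as the starting or ending vertex of a broken trace contained in $\bar S$; the left and bottom borders host only starts (the unique $\E(\bar S)$-edge steps into $S$ in the forward direction of the trace), and the right and top borders host only ends.

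From this observation Part~\ref{item1ellextreme} is immediate: if $\ell\in C(S)$ and $\ell\subseteq\ell'\subseteq\bar S$, then $\ell'$ cannot extend $\ell$ beyond either endpoint on $\partial\bar S$, so $D(\ell)=D(\ell')$ and hence $\ell=\ell'$ via $t=t'$ on the common domain. For Part~\ref{item1partorder}, reflexivity is immediate, and antisymmetry uses the same device: $\ell\succeq\ell'\succeq\ell$ forces $t=t'$ on $D(\ell)\cap D(\ell')$, and a strict inclusion $\min D(\ell)<\min D(\ell')$ would put $\ell'$'s starting vertex as an interior vertex of $\ell$, contradicting the observation.

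The heart of the proof is the sub-claim, used both for transitivity and for Part~\ref{item1useless}: if $\ell_2$ lies between $\ell_1,\ell_3\in C(S)$ in the partial order, then $D(\ell_1)\cap D(\ell_3)\subseteq D(\ell_2)$. I prove it for the order $\ell_1\preceq\ell_2\preceq\ell_3$ by contradiction. Suppose $x_0\in D(\ell_1)\cap D(\ell_3)$ with $x_0<\alpha_2:=\min D(\ell_2)$, and write $\ell_2$'s starting vertex in rotated coordinates as $(a_2,b_2)\in\partial\bar S$. Evaluating $\ell_1\preceq\ell_2$ at $x'=\alpha_2$ and using $u_i+v_i=\alpha_2=a_2+b_2$ yields $u_1(\alpha_2)\leq a_2$, while $\ell_2\preceq\ell_3$ at $x'=\alpha_2$ yields $u_3(\alpha_2)\geq a_2$. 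If $(a_2,b_2)$ lies on the left border (so $a_2=u_{\min}-1$), the bound $u_1(\alpha_2)\leq u_{\min}-1$ combined with $u_1\geq u_{\min}-1$ (from $\ell_1\subseteq\bar S$) forces equality, so $\ell_1$ at $x'$ equals $(a_2,b_2)$; if $(a_2,b_2)$ lies on the bottom border, the dual computation places $\ell_3$ at $x'$ at $(a_2,b_2)$. In either case the structural observation forces that boundary vertex to be a start or end of the trace in question; ``end'' is ruled out because left and bottom borders host only starts, and ``start'' forces $\alpha_1=\alpha_2$ or $\alpha_3=\alpha_2$, contradicting $x_0\geq\alpha_1,\alpha_3$. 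The case $x_0>\omega_2$ is symmetric, using the right and top borders.

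Granted the sub-claim, transitivity in Part~\ref{item1partorder} follows by chaining $t_1\geq t_2\geq t_3$ on $D(\ell_1)\cap D(\ell_3)\subseteq D(\ell_2)$ to verify condition~(a) in the definition of $\succeq$; condition~(b) holds because this intersection is nonempty. For Part~\ref{item1useless}, the inclusions $D(\ell)\subseteq D(\ell_1)\cap D(\ell_3)\subseteq D(\ell_2)$ combined with the squeeze $t_1\leq t_2\leq t_3$ on $D(\ell_2)$ and the equalities $t_1=t=t_3$ on $D(\ell)$ force $t_2=t$ on $D(\ell)$, so $\ell\subseteq\ell_2$. I expect the main obstacle to be the sub-claim's case analysis: verifying that the inequalities forced by $\preceq$ at $\alpha_2$ really do collapse into an equality pinning $\ell_1$ or $\ell_3$ to the boundary starting vertex $(a_2,b_2)$.
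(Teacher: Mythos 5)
Your rotated-coordinate setup and the degree-one observation for boundary vertices of $\bar S$ are correct for rectangular domains, and they give clean proofs of Item~\ref{item1ellextreme}, of antisymmetry when the domains meet, and of Item~\ref{item1useless} (which the paper actually omits, so your sub-claim is a genuine addition). The route is different from the paper's: the paper characterizes $\ell'\succeq\ell$ as containment in a union of forward cones $A(\ell)$ and gets transitivity from monotonicity of $A$, whereas you work directly with the inequalities at $x'=\min D(\ell_2)$.

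There is, however, a concrete gap: every step of your argument for Item~\ref{item1partorder} silently assumes that the relevant domains intersect, while the definition of $\succeq$ explicitly covers the case $D(\ell)\cap D(\ell')=\emptyset$ (the paper remarks that the clause ``$t(x)\geqslant t'(x')$ for some $x\in D(\ell)$, $x'\in D(\ell')$'' is there precisely for this case). Such pairs do occur in $C(S)$: in your coordinates, for $S=[0,N_u]\times[0,N_v]$ with $N_v>N_u+2$, a trace entering at the bottom and exiting on the right near $v=0$ has $x$-range disjoint from that of a trace entering on the left near $v=N_v$ and exiting at the top. For antisymmetry with disjoint domains, condition~(a) is vacuous and $\ell\succeq\ell'\succeq\ell$ reduces to the two time-ranges $I(\ell),I(\ell')$ overlapping; ruling this out is exactly the paper's Claim~1 ($D(\ell)\cap D(\ell')=\emptyset$ implies $I(\ell)\cap I(\ell')=\emptyset$), which needs its own geometric argument and does not follow from your boundary-degree observation. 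The same issue hits transitivity (if $D(\ell_1)\cap D(\ell_3)=\emptyset$ you still must produce \emph{some} pair with $t_1(x)\geqslant t_3(x'')$, and your chaining gives nothing) and the sub-claim itself (evaluating $\ell_1\preceq\ell_2$ at $x'=\alpha_2$ requires $\alpha_2\in D(\ell_1)$, i.e.\ $D(\ell_1)\cap D(\ell_2)\neq\emptyset$). You need either to prove the analogue of Claim~1 or to adopt something like the paper's cone characterization to close these cases; the rest of your argument stands.
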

It should be clear that the items above hold true.
The rigorous proof of this lemma consists of straightforward but tedious
verifications and is postponed until Appendix~\ref{sec1lorder}.

\begin{lemma}
 \label{lemma1comparable}
 Let $S$ be a rectangular domain.
If $w_\eta(\ell)>0$ and $w_\eta(\ell')>0$ for some flow field $\bar\eta$, then
$\ell$ and $\ell'$ are comparable, that is, $\ell\succeq\ell'$ or
$\ell'\succeq\ell$.
\end{lemma}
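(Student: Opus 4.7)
The plan is by contradiction. Suppose $w_\eta(\ell), w_\eta(\ell') > 0$ but $\ell, \ell'$ are incomparable. Inspecting the definition of $\succeq$, incomparability forces $D(\ell)\cap D(\ell')\neq\emptyset$ (otherwise the existential clause already guarantees $\ell\succeq\ell'$ or $\ell'\succeq\ell$), and then $\delta(x):=t(x)-t'(x)$ to take both positive and negative values on this common domain. Since $\delta$ is even-valued and changes by $0$ or $\pm 2$ between consecutive $x$, I can choose $x_0$ and an integer $m\geqslant 1$ with $\delta(x_0)>0$, $\delta(x_0+m+1)<0$, and $\delta\equiv 0$ on $[x_0+1,x_0+m]$. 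Writing $y^*_j=(t(x_0+j),x_0+j)$, the lines $\ell$ and $\ell'$ coincide on the ``plateau'' $y^*_1,\dots,y^*_m$: line $\ell$ enters $y^*_1$ from the SE while $\ell'$ enters from the SW, and at $y^*_m$ line $\ell$ exits to the NW while $\ell'$ exits to the NE.

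When $m=1$, the single shared vertex $y^*$ already yields the contradiction: $\gamma(\ell)$ performs the SE$\to$NW transition, governed by Case~2, which requires $\eta(e^\nwarrow_{y^*})>\eta(e^\swarrow_{y^*})$, while $\gamma(\ell')$ performs SW$\to$NE, governed by Case~3, which requires the opposite inequality.

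When $m\geqslant 2$, I track the relative $p$-order of the maximal atom intervals $J_\ell$ and $J_{\ell'}$ along the shared edges. A short case check at $y^*_1$ --- split according to whether the shared outgoing edge is NE (so $\ell$ uses Case~4 and $\ell'$ uses Case~3) or NW (so $\ell$ uses Case~2 and $\ell'$ uses Case~1) --- shows that on this edge $J_\ell$ lies in the upper $p$-subrange while $J_{\ell'}$ lies in the lower one, according to the partition induced by the two operative cases. At each intermediate vertex $y^*_j$, the lines $\ell$ and $\ell'$ share both the incoming and the outgoing edge, hence undergo the same transition case, and each of Cases~1--4 sends $p_{\mathrm{in}}\mapsto p_{\mathrm{out}}$ by an affine shift (identity, $\pm\eta(e^{\cdots})$, or $\eta(e^\nearrow)-\eta(e^\searrow)$). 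The ``$J_\ell$ above $J_{\ell'}$'' invariant therefore survives up to $y^*_m$. At $y^*_m$ the two lines diverge, and a symmetric case check shows that $\ell$'s exit to NW forces $J_\ell$ into the lower subrange of the shared incoming edge (Case~1 if that edge is SW, Case~2 if SE), while $\ell'$'s exit to NE forces $J_{\ell'}$ into the upper subrange (Case~3 or Case~4 respectively). This directly contradicts the invariant and completes the proof.

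The main obstacle is the plateau argument: precisely identifying the upper/lower partition of each shared edge that is induced by the operative cases at the plateau's endpoints, and verifying monotonicity for all four case maps. Both are routine but finite case analyses, which I would package into a small auxiliary claim on ``order preservation along shared edges'' before running the contradiction.
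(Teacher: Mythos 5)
Your proposal is correct and runs on essentially the same engine as the paper's proof: an invariant tracking both the relative position $t(x)$ versus $t'(x)$ and, on shared edges, the relative $p$-order of the maximal atom intervals, propagated through the four association cases (which are order-preserving shifts). The paper phrases this as a direct induction outward from a point where $t(x_0)\ne t'(x_0)$, while you take the contrapositive and localize the contradiction at a maximal plateau where $t-t'$ changes sign; this is a repackaging rather than a genuinely different route.
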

Lemma~\ref{lemma1comparable} is a consequence of the way the association rules
have been defined.
In order to prove it, one keeps applying such association
rules and the result follows by induction -- see Appendix~\ref{sec1comparable}.

The theorem below is fundamental.
It says we can decompose a given flow field in many other smaller fields, each
one corresponding to one of the maximal broken lines that cross $S$ and is
associated to the field.
In this case the original flow field and all of its features, namely the birth
process, the boundary conditions and the weight it attributes to broken traces,
are additive in the sense that each of these is obtained by summing over the
smaller fields.
On the other hand it tells that, given an ordered set of broken lines, it is
possible to combine the corresponding fields and thereby determine the flow
field that is associated to them.

\begin{theorem}
\label{theo1etaell}
Let $S$ be a rectangular domain.

Given $\ell_1\prec\cdots\prec\ell_n\in C(S)$ and $w_1,\dots,w_n>0$, there is a
unique flow field $\bar\eta$ in $\E(\bar S)$ such that
\begin{equation}
  \label{eq1lbtcarac}
  w_\eta(\ell) =
  \begin{cases} 
     w_j, & \mbox{if } \ell=\ell_j \mbox{ for some } j \\ 0, & \mbox{otherwise}
  \end{cases}
  \qquad\quad
  \mbox{for any $\ell\in C(S)$.}
\end{equation}
Moreover, for $\zeta^\circ,\xi$ such that $\bar\eta=\bar\eta(\zeta^\circ,\xi)$,
the following decompositions hold:
\begin{equation}
 \label{eq1ldecomp}
\bar\eta = \sum_jw_j\bar\eta_j, \quad \zeta^\circ = \sum_jw_j\zeta^\circ_j,
\quad \xi = \sum_jw_j\xi_j,
\end{equation}
where $\bar\eta_j=\bar\eta(\ell_j)$, $\zeta^\circ_j = \zeta^\circ(\ell_j)$ and
$\xi_j = \xi(\ell_j)$. Furthermore,
\begin{equation}
  \label{eq1lweightcalc}
  w_\eta(\ell) = \sum_{\ell'\in C(S)} w_\eta(\ell') \I_{\ell\subseteq\ell'}
 \qquad\quad
 \mbox{for any } \ell\subseteq\bar S.
\end{equation}

Conversely, given a flow field $\bar\eta$, there are unique sets $\{\ell_j\}$
and $\{w_j>0\}$ that satisfy~(\ref{eq1lbtcarac}).
The set $\{\ell_j\}$ is totally ordered by the relation `$\succeq$' and
(\ref{eq1ldecomp})-(\ref{eq1lweightcalc}) hold in this case.
\end{theorem}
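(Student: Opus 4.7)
The plan is to establish the two directions of the correspondence between ordered families of crossing broken lines and flow fields, then derive the weight formula~\eqref{eq1lweightcalc}.

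For the forward direction I would set $\bar\eta := \sum_{j=1}^n w_j \bar\eta(\ell_j)$, $\zeta^\circ := \sum_j w_j \zeta^\circ_j$, and $\xi := \sum_j w_j \xi_j$. A direct check of the four pass-through patterns at an interior vertex of a single broken line shows that $\bar\eta(\ell_j)$ satisfies the conservation law~\eqref{eq1lcons}, and since this identity is linear, $\bar\eta$ is itself a flow field and~\eqref{eq1ldecomp} is immediate from the definitions of $\zeta^\circ(\ell_j)$ and $\xi(\ell_j)$.

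The core step is to verify~\eqref{eq1lbtcarac}. I would argue vertex by vertex. At each $y$, every $\ell_{i_r}$ in the family crossing $y$ uses one of four patterns (entry SW or SE, exit NW or NE). The key structural observation is that the patterns SW-to-NE (straight ascending) and SE-to-NW (straight descending) cannot coexist at the same $y$, since two such lines would cross at $y$, contradicting Lemma~\ref{lemma1comparable}. Combined with the fact that the $\prec$-order forces SW-entries to precede SE-entries and NW-exits to precede NE-exits at $y$, this lets me partition the atoms on each of the four edges incident to $y$ into consecutive intervals of lengths $w_{i_r}$, stacked in $\prec$-order. A case-by-case verification of the four association rules then shows the intervals pair correctly (Case~1 for right corners, Case~4 for left corners, Cases~2 and~3 for straight passes); the no-coexistence observation is precisely what forces $\min(\eta(e^\swarrow_y),\eta(e^\nwarrow_y))$ to equal the total weight of SW-to-NW lines at $y$, with analogous identities for the other cases. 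Since each edge of $\ell_j$ is the shared edge of two consecutive vertices along $\ell_j$, the interval assigned to $\ell_j$ agrees at both endpoints, so the broken line $\gamma(\ell_j)$ is globally well-defined with constant window of length $w_j$; maximality and the exhaustion of the atom budget give $w_\eta(\ell_j) = w_j$ and $w_\eta(\ell) = 0$ for any other $\ell \in C(S)$.

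Identity~\eqref{eq1lweightcalc} then follows because any subtrace $\ell$ of some $\ell_j$ inherits its weight and distinct $\ell_j$'s contribute additively. For the converse, given a flow field $\bar\eta$ I would iteratively peel off crossing lines: pick any $\ell \in C(S)$ with $w_\eta(\ell) > 0$ and replace $\bar\eta$ by $\bar\eta - w_\eta(\ell)\bar\eta(\ell)$; nonnegativity is preserved by the maximality defining $w_\eta(\ell)$, and conservation by linearity. The total mass $\sum_{e} \bar\eta(e)$ strictly decreases at each step by $w_\eta(\ell)$ times the (bounded-below) length of $\ell$, so the procedure terminates after finitely many steps. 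Lemma~\ref{lemma1comparable} forces the extracted set to be totally ordered by $\succeq$, and~\eqref{eq1lbtcarac} gives uniqueness.

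The main obstacle I anticipate is the vertex-level case analysis above: while the geometric picture of ordered lines stacking their atoms in $\prec$-order is clean, translating it into a rigorous verification across all combinations of the four patterns at $y$ is notationally heavy and depends essentially on the no-coexistence of SW-to-NE and SE-to-NW patterns, which is exactly the content of Lemma~\ref{lemma1comparable}.
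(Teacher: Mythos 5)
Your forward direction (define $\bar\eta:=\sum_j w_j\bar\eta(\ell_j)$ and verify locally that the association rules stack the lines' atom intervals in $\prec$-order on each edge) is a genuinely different route from the paper, which instead builds a ``brick diagram'': a height function $p_{t,x}$ obtained by integrating the flow, whose level strips $(q_{j-1},q_j]$ \emph{are} the broken lines. Your local superposition argument looks workable, and the case analysis you flag as heavy is indeed the price of avoiding the global height-function picture. But note that the paper's construction handles both directions of the theorem simultaneously: the chain of equivalent representations $\bar\eta\leftrightarrow p\leftrightarrow(A,k)\leftrightarrow t\leftrightarrow\{\ell_j\}$ proves \eqref{eq1lweightcalc} for an \emph{arbitrary} flow field first, and then \eqref{eq1lbtcarac} and \eqref{eq1ldecomp} in both directions are read off from it. You only obtain \eqref{eq1lweightcalc} for fields you have already built as superpositions, so the entire burden of the converse falls on your peeling procedure.

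That peeling procedure is where the genuine gap lies. First, termination: ``the total mass strictly decreases at each step'' does not give finiteness --- strictly positive decrements can sum to a finite total without the process ever reaching zero. Finiteness of $C(S)$ would save you only if you also knew that each peel permanently kills the peeled trace and never reactivates a trace whose weight was zero. Second, and more fundamentally, you never relate $w_{\eta'}$ to $w_\eta$ for $\eta'=\bar\eta-w_\eta(\ell)\bar\eta(\ell)$. The association rules depend nonlinearly on $\bar\eta$ through the thresholds $\eta(e_y^\swarrow)\wedge\eta(e_y^\nwarrow)$, $p>\eta(e_y^\swarrow)$, etc., so subtracting one line's flow can in principle reroute the chains of every other line. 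Without a lemma asserting that $w_{\eta'}(\ell)=0$ and $w_{\eta'}(\ell')=w_\eta(\ell')$ for all other $\ell'\in C(S)$, you cannot conclude that the procedure terminates, that it ends at the zero field, or --- crucially --- that the weights you extract at later stages (computed in partially peeled fields) are the weights $w_{\bar\eta}(\ell_j)$ of the \emph{original} field, which is what \eqref{eq1lbtcarac} asserts. Proving that invariance lemma is essentially equivalent to proving the strip decomposition itself, which is why the paper takes the height-function route. You also use Lemma~\ref{lemma1comparable} to order the extracted traces, which is fine, but existence of some $\ell\in C(S)$ with $w_\eta(\ell)>0$ whenever $\bar\eta\neq0$ (needed to start each peel) deserves at least the remark that the association at each vertex exhausts all atoms on the lower edges, so a positive-measure chain can always be extended to a crossing trace.
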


\begin{proof}
We start by the converse part, first proving that (\ref{eq1lweightcalc}) holds
for any flow field $\bar\eta$, which is the most laborious work.
As we shall see, the proof of~(\ref{eq1lweightcalc}) is a formalization of the construction below,
whereas (\ref{eq1lbtcarac})~and~(\ref{eq1ldecomp}) are immediate consequences,
as discussed afterwards.

Then it will suffice to show that, given any pair of sets
$\{\ell_1\prec\ell_2\prec\cdots\prec\ell_M\}$ and $\{w_1,\dots,w_M>0\}$,
there is some flow field $\bar\eta$ satisfying~(\ref{eq1lbtcarac}).
Uniqueness of such flow field follows from the converse part.
Again, by the converse part,~(\ref{eq1ldecomp})~and~(\ref{eq1lweightcalc}) will
hold as well, completing the proof of the theorem.
In order to prove that~(\ref{eq1lbtcarac}) holds we basically have to see that
the construction below can be reversed.

Though the construction looks simple, several equivalent representations of a
flow field may be seen on the same picture.
The theorem will be deduced from these representations.
We remind that the sole condition for a field $\eta(e)\geqslant0$, $e\in\E(\bar
S)$ to be a flow field is the conservation law below:
\begin{equation}
 \label{eq1lcons2}
  \eta(e_y^\nwarrow) + \eta(e_y^\nearrow) = \eta(e_y^\swarrow) +
 \eta(e_y^\searrow)\qquad\forall\ y\in S.
\end{equation}
The construction we start describing now strongly relies on this fact.

First we plot on a horizontal line two adjacent intervals whose lengths
correspond to the flow $\eta$ on the two topmost edges of $\E(\bar S)$, i.e.,
the two edges incident from above to the topmost site $y\in S$.
Since the intervals are adjacent we may do that by marking three points on this
line.
On the next line (parallel to and below the first one), we plot four adjacent
intervals having lengths corresponding to the flow on the four edges incident
from above to the two sites on the 2nd row of $S$.
It follows from~(\ref{eq1lcons2}) that one can position these intervals in a way
that its 2nd and 4th points stay exactly below the 1st and 3rd points of the
first line.
By linking these two pairs of points we get a ``brick'' that corresponds to the
topmost site $y$ of $S$.
The width of this brick is equal to the quantity expressed in~(\ref{eq1lcons2}),
its top face is divided into two subintervals having lengths
$\eta(e_y^\nwarrow)$ and $\eta(e_y^\nearrow)$ and its bottom face is divided
into intervals of lengths $\eta(e_y^\swarrow)$ and $\eta(e_y^\searrow)$ -- see
Figure~\ref{fig1brick}.

We carry on this procedure for the 3rd horizontal line, then getting two bricks
that correspond to the two sites on the second row of $S$.
We keep doing this construction until we mark intervals corresponding to the two
bottommost edges of $\E(\bar S)$.
In the final picture we have one brick corresponding to each site of $S$.
Each pair of bricks on consecutive levels that have a common (perhaps
degenerate) interval on their boundaries corresponds to adjacent sites $y$ and
$y'$ in $S$; the length of this common interval equals $\eta\bigl(\langle
y,y'\rangle\bigr)$.
Once all the intervals have been plotted at the appropriate position, forming
all the bricks, we draw a dotted vertical line passing by each point that was
delimiting these intervals.
By doing this we divide the whole diagram into strips, completing the
construction of the \emph{brick diagram} as shown in Figure~\ref{fig1brick}.
\begin{figure}[!htb]
\tiny
\psfrag{eta(e_y^se)}{$\eta(e_y^\searrow)$}
\psfrag{eta(e_y^sw)}{$\eta(e_y^\swarrow)$}
\psfrag{eta(e_y^ne)}{$\eta(e_y^\nearrow)$}
\psfrag{eta(e_y^nw)}{$\eta(e_y^\nwarrow)$}
 \centering
 \includegraphics[width=120mm]{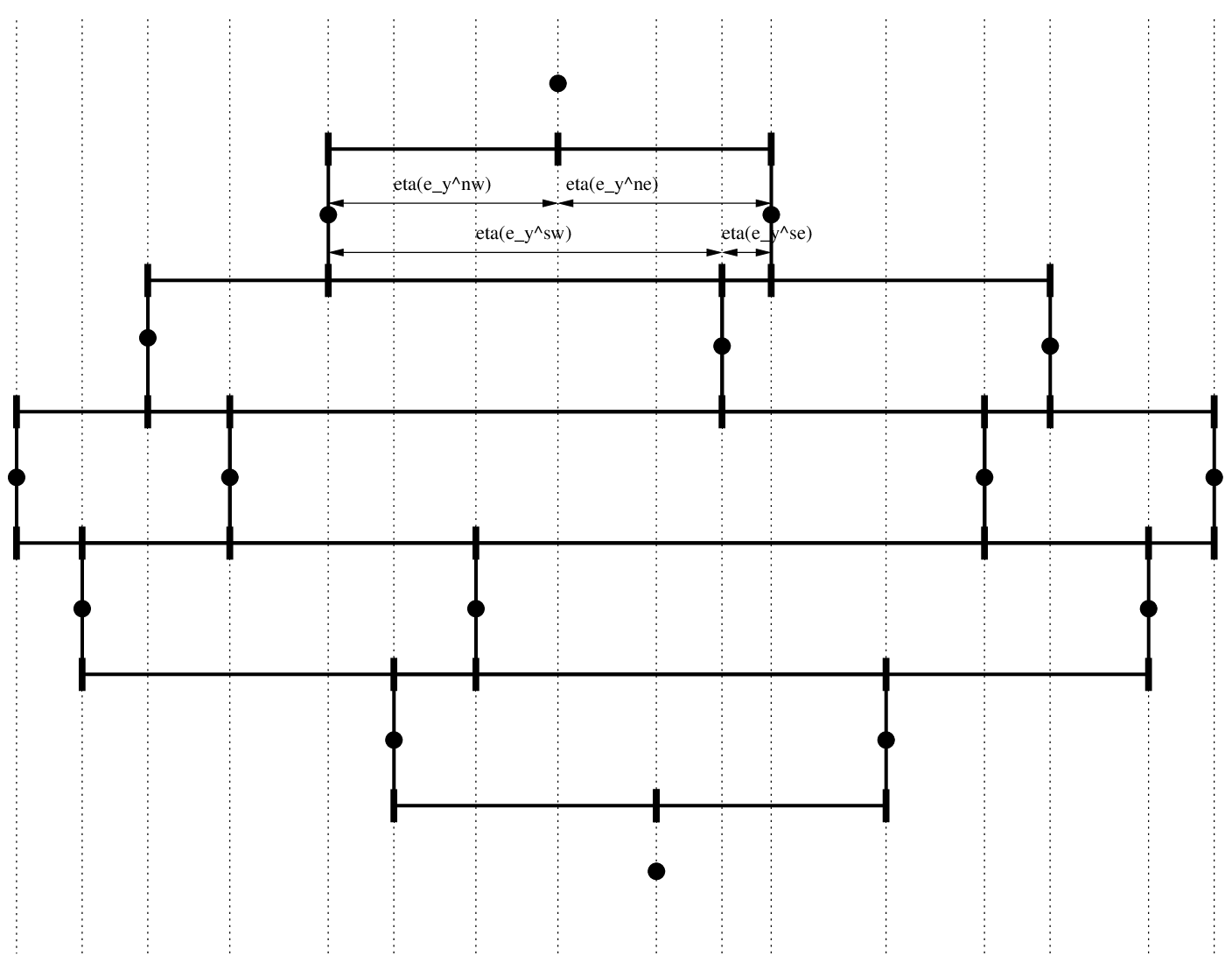}
\caption{construction of the \emph{brick diagram}.
 In this example $S$ is a $3\times3$ rectangular domain.
 There are 6 horizontal lines with a total of 24 intervals forming 9 bricks.
 The diagram is divided into 15 strips.}
\label{fig1brick}
\end{figure}

Each strip corresponds to a maximal broken line that crosses $S$ and the weight
of this broken line equals the width of the strip.
The sites/edges that compound the broken line correspond to the bricks/intervals
the strip intersects.
Given a broken trace $\ell\subseteq\bar S$, we can determine the maximal broken
line $\gamma(\ell)$ that passes through $\ell$ by looking which strips pass by
all the sites/edges of $\ell$ (i.e., their corresponding bricks/intervals); the
weight of this broken line is obtained by summing the width of such strips.
See Figure~\ref{fig1domainlines}.
\begin{figure}[!htb]
 \centering
 \includegraphics[width=75mm]{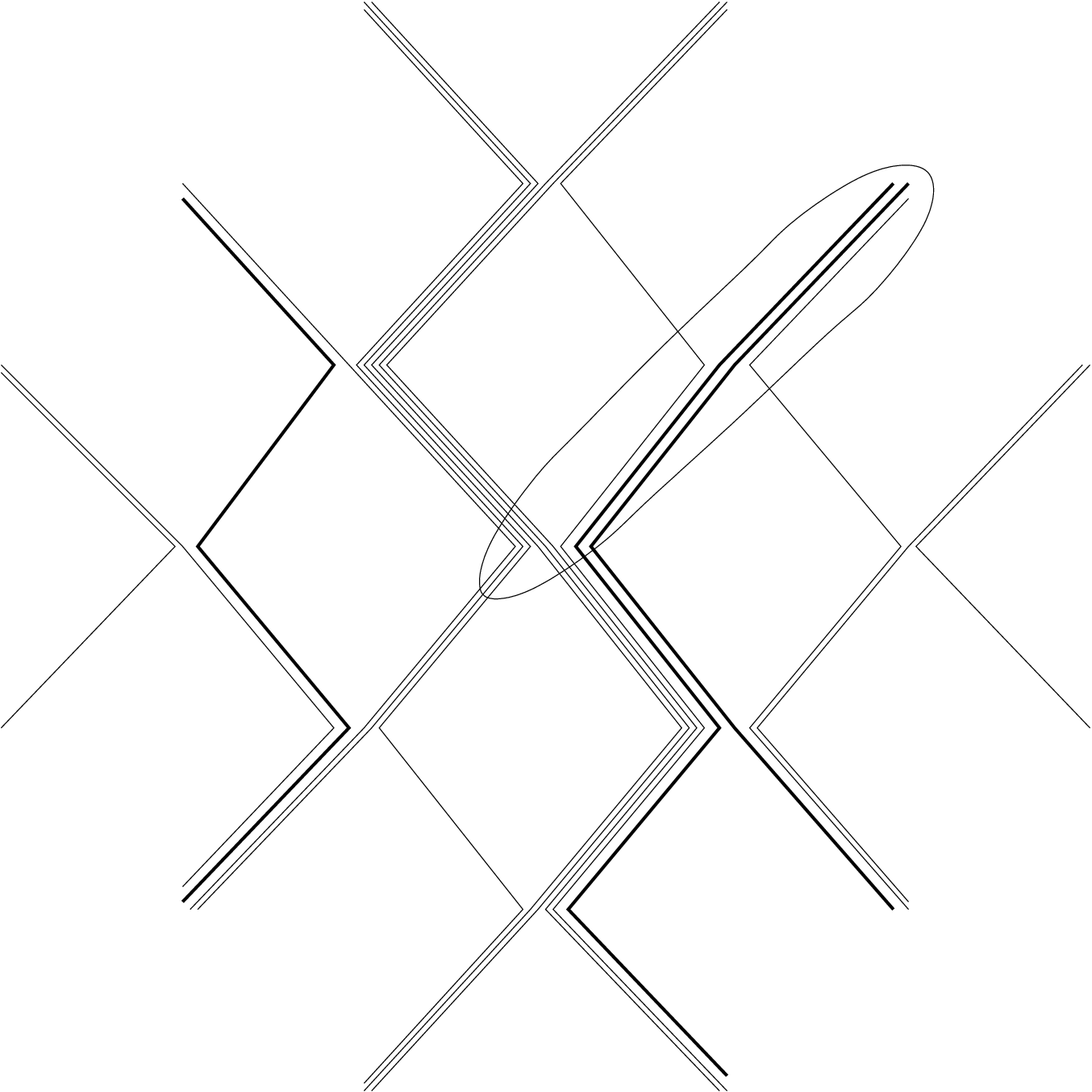}
 \hspace{0.4cm}
 \includegraphics[width=65mm]{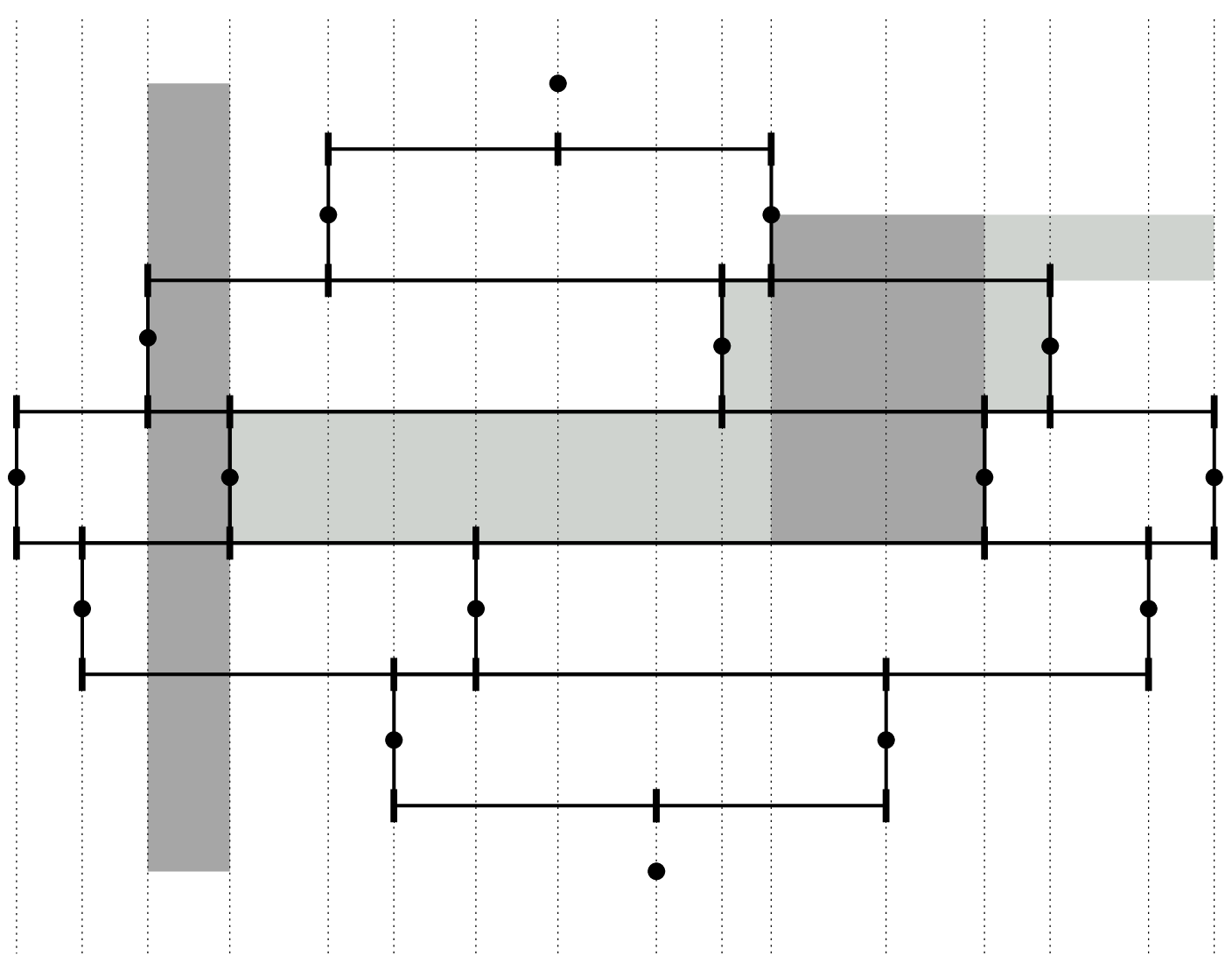}
\caption{broken lines that cross $S$ and calculation of the weight of a given
broken trace.
On the left we have a configuration of broken lines on the $3\times3$
domain $S$ and on the right we have the brick diagram that originated them.
The 3rd strip in the brick diagram is highlighted and the corresponding
broken line appears in bold on the left.
Also, for a broken trace $\ell$ that starts at the center of $S$, goes
northeast, and then exits $S$ going northeast again, we have determined which
broken lines in $C(S)$ contain $\ell$; they are the 11th and 12th ones and they
also appear in bold.
In this case $w(\ell)$ equals the sum of the widths of the 11th and 12th
strips.
There are a total of 15 broken lines crossing $S$, corresponding to the 15 strips.
}
\label{fig1domainlines}
\end{figure}

So, given a flow field $\bar\eta$ we construct the brick diagram from which the
broken line configuration is deduced, with the desired property that such broken
lines satisfy~(\ref{eq1lbtcarac}) and~(\ref{eq1lweightcalc}). 

On the other hand, let a set of well ordered broken lines, that is,
$\{\ell_1\prec\ell_2\prec\cdots\prec\ell_M\}$ and $\{w_1,\dots,w_M>0\}$, be
given.
One can consider the corresponding broken line diagram, from which one
constructs the brick diagram and the latter gives a flow field
satisfying~(\ref{eq1lbtcarac}).

In a first reading, one is encouraged to understand the above description with
the corresponding figures.
The more interested reader will find a detailed proof in
Appendix~\ref{sec1etaell}.

Since~(\ref{eq1lweightcalc}) has been proven for any flow filed, uniqueness of $\{\ell_j\}$ and $\{w_j\}$ becomes trivial by definition.
Well ordering follows from Lemma~\ref{lemma1comparable}.

Finally, (\ref{eq1ldecomp}) holds because of~(\ref{eq1lweightcalc}) when we
write each process in terms of weights of certain broken lines.
For $\zeta^\circ,\xi,\eta$ such that
$\bar\eta=\bar\eta(\zeta^\circ,\xi)=\bar\eta(\zeta^\circ ,\eta)$ we have
$\zeta^-_y = \eta(e_y^\nwarrow)$, $\zeta^+_y = \eta(e_y^\swarrow)$, $\eta^-_y =
\eta(e_y^\searrow)$, $\eta^+_y = \eta(e_y^\nearrow)$, and
$\xi_y=\eta_y^+\wedge\eta_y^-$. Given $e=\langle y,y'\rangle\in\E(\bar S)$, take
$\ell(e)=(y,e,y')$. Then of course $\eta(e)=w_\eta\bigl(\ell(e)\bigr)$.
Also, given $y\in S$, take $\ell^<(y) =(y_-,e_-,y,e_+,y_+)$, where $y=(t,x)$, $y_{\pm}=(t+1,x\pm1)$ and $e_\pm=\langle y,y_\pm\rangle$ and notice that $w_\eta\bigl(\ell^<(y)\bigr)=\xi_y$.
Also notice that $\zeta^-_y(\ell)=\I_{e_y^\nwarrow\in\ell}=
\I_{\ell(e_y^\nwarrow)\subseteq\ell}$.
Now we put all the pieces together to get $\zeta_y^-=\eta(e_y^\nwarrow) =
w_\eta\bigl(\ell(e_y^\nwarrow)\bigr)=\sum_jw_j\I_{
\ell(e_y^\nwarrow)\subseteq\ell_j}= \sum_jw_j(\zeta_j^-)_y$, i.e.,
$\zeta^-=\sum_jw_j\zeta_j^-$. The other equalities are deduced similarly.

This completes the proof of the converse part and, as discussed above, of the
theorem.
\end{proof}

\begin{corollary}
 Let $S$ be a rectangular domain and let $\bar\eta(\zeta^\circ,\xi)$ be given.
Then
\begin{equation}
 \label{eq1lhzx}
 \left[ \sum_{y\in\partial_-S} \zeta_y^+  + \sum_{y\in\partial^-S} \eta_y^-
\right] = \left[ \sum_{y\in\partial_+S} \zeta_y^- + \sum_{y\in\partial^+S}
\eta_y^+ \right] = \sum_{\ell\in C(S)}w(\ell).
\end{equation}
\end{corollary}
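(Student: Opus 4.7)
The plan is to apply Theorem~\ref{theo1etaell} to decompose the flow field as $\bar\eta=\sum_j w_j\bar\eta(\ell_j)$ over the finitely many maximal crossing broken lines, and then observe that each $\ell_j$ contributes $w_j$ to exactly one term in each of the two bracketed sums of~\eqref{eq1lhzx}.

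By Theorem~\ref{theo1etaell}, there exist $\ell_1\prec\dots\prec\ell_M\in C(S)$ with $w_j=w_\eta(\ell_j)>0$ and $\bar\eta=\sum_j w_j\bar\eta(\ell_j)$, while every other trace in $C(S)$ has zero weight; in particular $\sum_{\ell\in C(S)}w(\ell)=\sum_j w_j$. Using $\bar\eta(\ell_j)(e)=\I_{e\in\ell_j}$ together with $\zeta^+_y=\eta(e_y^\swarrow)$ and $\eta^-_y=\eta(e_y^\searrow)$, the left bracket in~\eqref{eq1lhzx} becomes
\[
\sum_j w_j\Bigl(\sum_{y\in\partial_-S}\I_{e_y^\swarrow\in\ell_j}+\sum_{y\in\partial^-S}\I_{e_y^\searrow\in\ell_j}\Bigr),
\]
and the right bracket rewrites analogously in terms of $e_y^\nwarrow$ with $y\in\partial_+S$ and $e_y^\nearrow$ with $y\in\partial^+S$.

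The key point is that the parenthesised sum equals $1$ for every $j$. By the definition of $C(S)$, the initial vertex $y_0$ of $\ell_j$ lies in $\partial_-\bar S\cup\partial^-\bar S$ with $y_1\in S$: in the first case $\langle y_0,y_1\rangle=e_{y_1}^\swarrow$ with $y_1\in\partial_-S$, and in the second case $\langle y_0,y_1\rangle=e_{y_1}^\searrow$ with $y_1\in\partial^-S$. No edge of either listed form can occur elsewhere along $\ell_j$: the ``outside'' endpoint of such an edge sits in $\partial\bar S$, so its appearance at a position $0<i<n$ would put some $y_{i-1}$ or $y_i$ in $\partial\bar S$, contradicting $y_1,\dots,y_{n-1}\in S$; and the constraint $x_i=x_{i-1}+1$ from~\eqref{eq1lbroktr} rules out such an edge being traversed in reverse as the final edge. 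Hence exactly one indicator in the parenthesis is $1$.

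Summing over $j$ gives that the left bracket equals $\sum_j w_j=\sum_{\ell\in C(S)}w(\ell)$, and the right bracket is treated identically using the terminal vertex $y_n\in\partial_+\bar S\cup\partial^+\bar S$. I do not expect a real obstacle: once Theorem~\ref{theo1etaell} is in hand the corollary is essentially bookkeeping, the only point meriting care being the observation that a crossing broken trace touches the south and the north boundaries of $\bar S$ exactly once each, which is built into the definition of $C(S)$ together with the $x$-monotonicity in~\eqref{eq1lbroktr}.
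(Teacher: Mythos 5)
Your proposal is correct and follows essentially the same route as the paper: decompose $\bar\eta$ via Theorem~\ref{theo1etaell} and observe that each crossing trace contributes its weight exactly once to each bracketed sum, since it meets the entry and exit boundaries of $\bar S$ in exactly one edge each. The only difference is that you spell out the ``exactly one boundary edge'' claim (which the paper asserts without proof) and argue from the initial vertex for the left bracket where the paper argues from the terminal edge and says the other case is similar.
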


\begin{proof}
For each $\ell\in C(S)$ there is exactly one $e\in\ell$ such that
$y\in\partial^+S,y'\in \partial^+\bar S$ or $y\in\partial_+S,y'\in
\partial_+\bar S$, where $e=\langle y,y'\rangle$.
So
$1 =
\sum_{(t,x)\in\partial_+S} \I_{\langle(t,x),(t-1,x+1)\rangle\in\ell}
+
\sum_{(t,x)\in\partial^+S} \I_{\langle(t,x),(t+1,x+1)\rangle\in\ell}
=
\sum_{y\in\partial_+S}\zeta_y^-(\ell)
+
\sum_{y\in\partial^+S}\eta_y^+(\ell)$.
Multiplying by $w(\ell)$ and summing over all $\ell\in C(S)$ we get
$\sum_{y\in\partial_+S} \zeta_y^- + \sum_{y\in\partial^+S} \eta_y^+ =
\sum_{\ell\in C(S)}w(\ell)$.
The proof of $\sum_{y\in\partial_-S} \zeta_y^+  + \sum_{y\in\partial^-S}
\eta_y^- = \sum_{\ell\in C(S)}w(\ell)$ is similar.
\end{proof}

Denote by $H_S(\zeta^\circ,\xi)$ the quantity expressed in (\ref{eq1lhzx}) and
write $H_S(\xi)$ when $\zeta^\circ=0$.
For a birth field $\xi\geqslant0$ on $S$ we define the {directed last passage
percolation value} $G_S(\xi)$ as the maximum of $\sum_{y\in\pi} \xi_y$ over all
$\pi\in\Pi_S$, where
$$
 \Pi_S=\{\pi=(y_0,\dots,y_m):y_0\in\partial_0 S, y_m\in\partial_1 S,
t_{i+1}=t_i+1, x_{i+1}=x_i\pm1\}.
$$
When it is clear which rectangular domain we are referring to we shall drop the
subscript $S$ of $H_S$, $G_S$ and $\Pi_S$.

The next proposition illustrates the connection between broken lines and passage
time.
Furthermore, its proof gives an explicit algorithm for determining the optimal
path (which is a.s. unique when $\xi$ has continuous distribution).
\begin{proposition}
 \label{prop1llpp}
 Let $S$ be a rectangular domain.
The last passage percolation value $G_S(\xi)$ is given by the sum of the weights of the broken lines associated to the corresponding birth field: $G_S(\xi)=H_S(\xi)$.
\end{proposition}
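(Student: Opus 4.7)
The plan is to apply Theorem~\ref{theo1etaell} to the flow field $\bar\eta(0,\xi)$ (with zero boundary data), decomposing $\xi=\sum_j w_j\,\I_{L(\ell_j)}$ for a chain $\ell_1\prec\cdots\prec\ell_M\in C(S)$ with weights $w_j>0$; the preceding corollary then gives $H_S(\xi)=\sum_j w_j$. For any up-path $\pi\in\Pi_S$ the passage value becomes
\[
\sum_{y\in\pi}\xi_y \;=\; \sum_j w_j\,\bigl|\pi\cap L(\ell_j)\bigr|,
\]
so it suffices to establish two matching bounds: $|\pi\cap L(\ell_j)|\le 1$ for every $\pi$ and every $j$, and the existence of some $\pi^*\in\Pi_S$ with $|\pi^*\cap L(\ell_j)|=1$ for every $j$.

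For the upper bound I would argue via a Lipschitz clash. Parametrising $\pi$ by $t$ gives a $1$-Lipschitz unit-step function $x=x_\pi(t)$, while parametrising the broken trace $\ell_j$ by $x$ gives a $1$-Lipschitz unit-step function $t=t_{\ell_j}(x)$. If two distinct left corners $(t_1,x_1)\neq(t_2,x_2)$ of $\ell_j$ both belonged to $\pi$, combining the two Lipschitz estimates forces $|x_2-x_1|=|t_2-t_1|=t_2-t_1$ after reordering so that $t_1<t_2$; this in turn forces every step of $\ell_j$ between $x_1$ and $x_2$ to be an up-step, whence $t_{\ell_j}(x_2-1)=t_2-1$. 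But $(t_2,x_2)\in L(\ell_j)$ demands $t_{\ell_j}(x_2-1)=t_2+1$, a contradiction. Summing over $j$ then yields $\sum_{y\in\pi}\xi_y\le H_S(\xi)$.

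For the lower bound I would construct an explicit optimal up-path. First, $L(\ell_j)\neq\emptyset$ for every $j$: since $\zeta^\circ=0$, any $\ell_j\in C(S)$ can enter $S$ only through $\partial^-\bar S$, and the supporting flow at the entry vertex $y_1$ reduces algebraically to the birth value $\xi_{y_1}$, making $y_1$ a left corner of $\ell_j$. Then $\pi^*$ would be built by a greedy algorithm starting from some $y_0\in\partial_0 S$: at each vertex the NE-versus-SE choice of step is made so that $\pi^*$ is positioned to cross the next pending line $\ell_{k+1}$ of the chain at one of its V-vertices (the sites supported by Case~4 of the association rules). The total ordering $\ell_1\prec\cdots\prec\ell_M$ together with Lemma~\ref{lemma1comparable} keeps the choices consistent and guides $\pi^*$ all the way to $\partial_1 S$, collecting exactly one left corner per broken line and so attaining $\sum_{y\in\pi^*}\xi_y=H_S(\xi)$. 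The main obstacle lies in verifying this greedy construction: showing that the direction choice is always unambiguously well-defined, that the path remains inside $S$, and that after all $M$ crossings it terminates on $\partial_1 S$. This is essentially the algorithmic content of the proposition, yielding an explicit optimal path for directed last passage percolation driven by the broken line decomposition.
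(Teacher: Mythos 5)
Your overall strategy coincides with the paper's: decompose $\bar\eta(0,\xi)$ via Theorem~\ref{theo1etaell} into the chain $\ell_1\prec\cdots\prec\ell_M$ with weights $w_j$, use the corollary to get $H_S(\xi)=\sum_j w_j$, rewrite $\sum_{y\in\pi}\xi_y=\sum_j w_j\,|\pi\cap L(\ell_j)|$, and then prove the two bounds $|\pi\cap L(\ell_j)|\leqslant 1$ and the existence of a $\pi^*$ achieving equality for every $j$. Your upper bound via the Lipschitz clash is correct and in fact more explicit than the paper, which merely asserts that an oriented path cannot meet two left corners of the same trace (do note that your step ``every step between $x_1$ and $x_2$ is an up-step, whence $t_{\ell_j}(x_2-1)=t_2-1$'' tacitly assumes $x_1<x_2$; the case $x_2<x_1$ needs the mirror statement $t_{\ell_j}(x_2+1)=t_2-1$, clashing with the corner condition at $x_2+1$). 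Also, your justification that $L(\ell_j)\neq\emptyset$ is slightly off: the entry vertex $y_1$ need not itself be a left corner, since the flow on $e^{\searrow}_{y_1}$ is $\eta^-_{y_1}=\xi_{y_1}+[\zeta^-_{y_1}-\zeta^+_{y_1}]^+$, not $\xi_{y_1}$. The clean argument is that with $\zeta^\circ=0$ a positive-weight crossing line must enter through $\partial^-\bar S$ (travelling northwest) and exit through $\partial^+\bar S$ (travelling northeast), so it must turn somewhere, and that turn is a left corner; alternatively, $\xi_j=0$ and $\zeta^\circ_j=0$ would force $\bar\eta(\ell_j)=0$, contradicting $\bar\eta(\ell_j)(e)=1$ on $e\in\ell_j$.

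The genuine gap is the lower bound. The step you flag as ``the main obstacle'' --- defining the direction choice unambiguously, keeping the path in $S$, and ensuring it collects one left corner from each of the $M$ lines --- is precisely the content of the proposition, and your forward ``aim at the next pending line's V-vertex'' rule is not a well-defined algorithm (it presupposes knowing where you will meet $\ell_{k+1}$, and it is unclear what to do while between two consecutive lines or when several lines share a site). The paper resolves this with a different and genuinely local rule, run \emph{backwards} from $y_m\in\partial_1 S$: set $t_{i-1}=t_i-1$ and $x_{i-1}=x_i-1$ if $\eta(e^{\swarrow}_{y_i})\geqslant\eta(e^{\nwarrow}_{y_i})$, and $x_{i-1}=x_i+1$ otherwise --- i.e., always step down the incoming edge carrying more flow. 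This rule never needs the line decomposition, only the flow field, and the verification (Appendix~\ref{sec1intersect} plus the short argument in the proof) shows that the \emph{first} vertex $y_n$ at which $\pi^*$ meets a given $\ell_j$ is forced to be a left corner: if, say, $x_{n-1}=x_n+1$ and $(t_n-1,x_n-1)\in\ell_j$, the Case~3 association would force $\eta(e^{\swarrow}_{y_n})>\eta(e^{\nwarrow}_{y_n})$, contradicting the choice of direction. To complete your proof you would need either to adopt this flow-comparison rule or to supply a comparably precise rule together with the inductive verification that it crosses every line at a left corner; as written, the existence of the optimal path is asserted rather than proved.
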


\begin{proof}
The proof consists on formalizing the following argument: an oriented path $\pi$
connecting $\partial_0S\to\partial_1S$ can cross at most one left corner of each
broken line, and on the other hand it is possible to assemble the path
backwards, following a local rule that does not miss any broken line, this is
possible because they never cross each other. See Figure~\ref{fig1lpp}.
\begin{figure}[!htb]
\psfrag{t}{$t$}
\psfrag{x}{$x$}
 \centering
 \includegraphics[width=5.0cm]{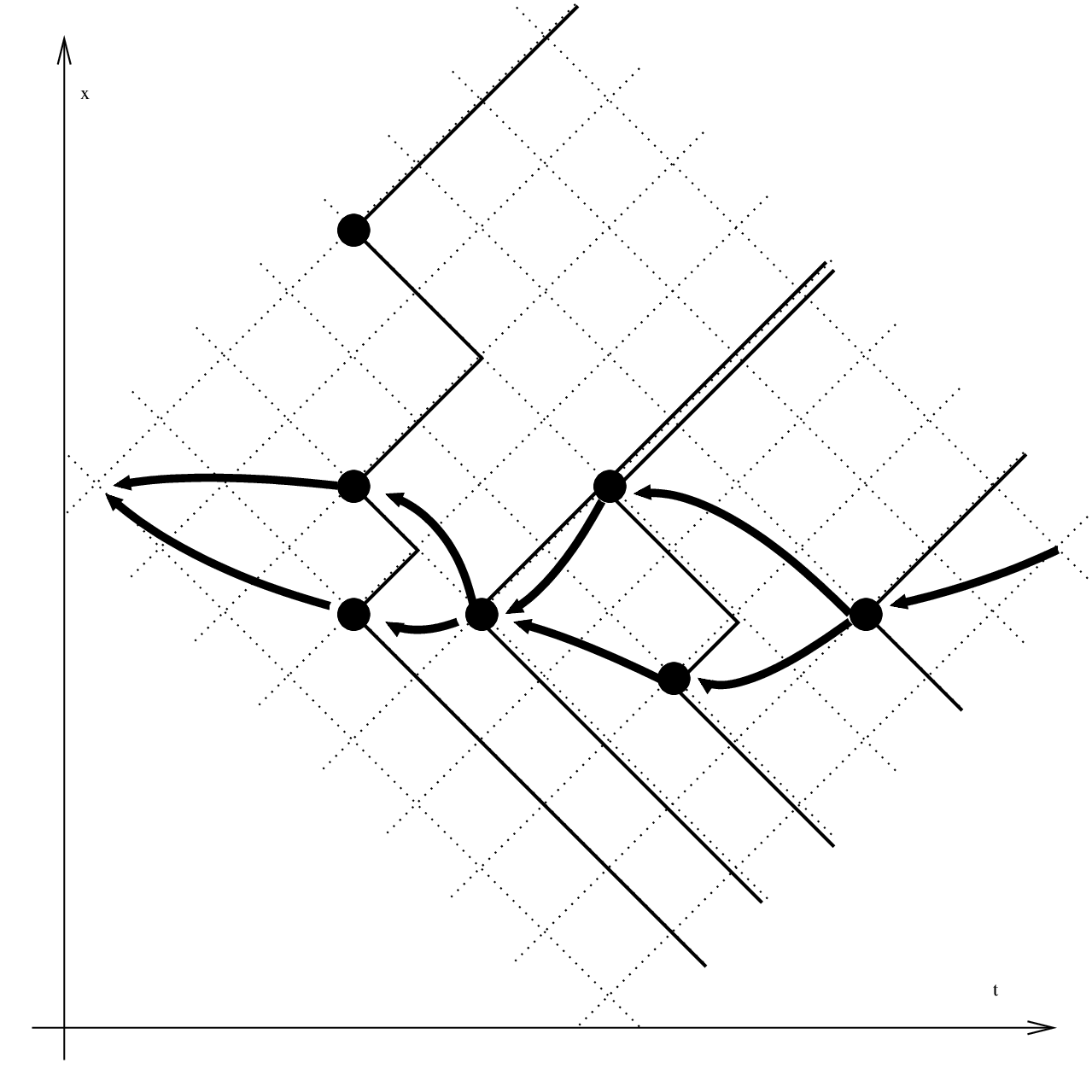}
\caption{construction of the maximal path. From $\xi$ one constructs the flow
field and by following algorithm~(\ref{eq1algorithm}) one gets an optimal path.
The theory developed for broken lines and Theorem~\ref{theo1etaell} guarantee
this is indeed optimal. The algorithm consists on forbidding the path to cross
any broken line; for that purpose it suffices to require that it does not
`cross' the flow field.}
\label{fig1lpp}
\end{figure}

Take $\bar\eta = \bar\eta(\zeta^\circ=0,\xi)$ and write $\{\ell\in
C(S):w_\eta(\ell)>0\}=\{\ell_1\prec\ell_2\prec\cdots\prec\ell_M\}$. Now by
(\ref{eq1lxidef}) and (\ref{eq1ldecomp}) it follows that $G(\xi)$ is the maximum
over all $\pi\in\Pi$ of
$$
  \sum_{y\in\pi} \xi_y = \sum_{y\in\pi} \sum_j w(\ell_j)[\xi(\ell_j)]_y = 
  \sum_j w(\ell_j) \sum_{y\in\pi} \I_{L(\ell_j)}(y).
$$
Since the paths $\pi$ are oriented, they cannot intersect more than one left
corner of each broken line, hence $\sum_{y\in\pi} \I_{L(\ell_j)}(y) \leqslant 1$
for each $j$.
We shall exhibit an algorithm for constructing a path $\pi^*$ that satisfies
\begin{equation}
 \label{eq1intersect}
 \sum_{y\in\pi^*} \I_{L(\gamma_j)}(y) \geqslant 1,
\end{equation}
which completes the proof.

The path $\pi^*$ is constructed by the following rule.
Let $y_m \in \partial_1 S$. For $i = m,m-1,m-2\dots,2,1$, let $t_{i-1}=t_i-1$
and
\begin{equation}
 \label{eq1algorithm}
  x_{i-1} = 
  \begin{cases}
     x_i-1,& \eta(e_{y_i}^\swarrow)\geqslant\eta(e_{y_i}^\nwarrow), \\ 
     x_i+1, & \mbox{otherwise}.
  \end{cases}
\end{equation}

It remains to show~(\ref{eq1intersect}),
i.e., that $\pi^*$ intersects $L(\ell_j)$ for each $j=1,\dots,M$.
Fix $j$ and write $\ell$ for $\ell_j$.
Assume without proof that $\pi^*$ intersects $\ell$; a complete proof is shown
in Appendix~\ref{sec1intersect}.
Take $n=\min\{i:y_i\in\ell\}$. If $y_0\in\ell$ we have $y_0\in L(\ell)$ due to
the fact that $\zeta^\pm_{y_0}=0$.
So suppose $n>0$. By construction $x_{n-1}=x_n\pm1$; assume for simplicity
$x_{n-1}=x_n+1$. Now $t_{n-1}=t_n-1$ and $n$ is minimal, so
$(t_n-1,x_n+1)\not\in\ell$.
Since $y_n\not\in\partial\bar S$, $x_n+1\in D(\ell)$ and thus
$(t_n+1,x_n+1)\in\ell$.
Now if $(t_n-1,x_n-1)$ were in $\ell$ there would be
$p_1\in(0,\eta(e_{y_n}^\swarrow)]$, $p_2\in(0,\eta(e_{y_n}^\nearrow)]$
associated by Case~3, which implies
$\eta(e_{y_n}^\swarrow)>\eta(e_{y_n}^\nwarrow)$, contradicting the choice of
$x_{n-1}=x_n+1$, so $(t_n-1,x_n-1)\not\in\ell$. But also $x_n-1\in D(\ell)$,
thus $(t_n+1,x_n-1)\in\ell$ and therefore $y_n\in L(\ell)$.
\end{proof}

\section{
Geometric and exponential last passage percolation}
\label{sec1appl}


It follows from super-additivity that the last passage percolation model satisfies a law of large numbers.
However it is interesting that for the two-dimensional model and for the special case of i.i.d.\ (geometric or exponential) passage time distributions there is an explicit expression for the limiting constant in the oriented case.
For the exponential distribution it was found by Rost~\cite{rost81} and for geometric case by Jockusch, Propp, and Shor~\cite{jockusch95}.
Large deviations were studied by Johansson~\cite{johansson00} and by Seppäläinen~\cite{seppalainen98}.
Fluctuations were studied in~\cite{johansson00}.

With the aid of the broken line theory developed in the previous sections it is possible to re-obtain the explicit constants for the law of large numbers.
We also prove exponential decay for the probability of deviations.
What we present in this section is an alternative proof that could give some
geometric insight of the model.
Besides, the broken-line approach provides an explicit, linear algorithm for
determining the maximal path (see the proof of Proposition~\ref{prop1llpp}).
In the proof we show that the boundary conditions give no asymptotic
contribution to the total flow of broken lines that cross a given domain when
they have this suitable distribution.

In the same spirit, O'Connell~\cite{oconnell00} also devises such constants by simple probabilistic arguments.
We note that our approach is self-contained, except for using of Cramér's theorem for large deviations of i.i.d. sums.
A proof of Burke's theorem is implicitly contained in our considerations of reversibility.

The construction consists on first choosing the appropriate distributions of the
boundary conditions that (i) make the broken line process reversible and (ii)
provide the correct asymptotic behavior,
and then dropping the boundary conditions afterwards.

For $N,M\in\N$, we define the last passage percolation value on the square
$\{1,\dots,N\}\times\{1,\dots,M\}$ as the random number $G(N,M)$ given by the
maximum sum of $\xi_{y_j}$ over all oriented paths $(y_1,\dots,y_{N+M-1})$ from
$(1,1)$ to $(N,M)$. $G(N,M)$ is random because so are the $\xi_y$'s.

\begin{theorem}
 \label{theo1exp}
 Suppose $\xi_y$, 
 $y\in\N^2$,
 are i.i.d and distributed as $\exp(\alpha)$ and let $\beta>0$ be fixed.
 Then a.s.
$$
 \lim_{N\to\infty} \frac1NG(N,\lfloor\beta N\rfloor) =
 \frac{\left(1+\sqrt\beta\right)^2}\alpha.
$$
For each $\delta>0$, there exists $c=c(\delta)>0$ such that
\begin{equation}
 \label{eq1concentexp}
 P\left\{\left| \frac1NG(N,\lfloor\beta N\rfloor) -
\frac{\left(1+\sqrt\beta\right)^2}\alpha \right| > \delta\right\} \leqslant
e^{-cN}
\end{equation}
for all $N\in\N$.
\end{theorem}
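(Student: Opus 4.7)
My plan is to attach reversible boundary data to the rectangular domain $S$ corresponding to $\{1,\ldots,N\}\times\{1,\ldots,\lfloor\beta N\rfloor\}$, exploit the self-duality of Theorem~\ref{theo2dualgeoexp} (Burke's property in this framework) to compute $E[H_S]$ explicitly, and then sandwich $G(N,M)$ using the decomposition of Theorem~\ref{theo1etaell} and the algorithm of Proposition~\ref{prop1llpp}. Fix $\alpha_1,\alpha_2>0$ with $\alpha_1+\alpha_2=\alpha$. On top of the interior $\xi\sim\exp(\alpha)$ field, place i.i.d.\ entering flows $\zeta^+\sim\exp(\alpha_1)$ on $\partial_-\bar S$ and $\zeta^-\sim\exp(\alpha_2)$ on $\partial_+\bar S$, independent of $\xi$. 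By Theorem~\ref{theo2dualgeoexp}, this triple satisfies~(\ref{eq1lmpres}), so the broken-line process is self-dual; the continuum analogue of~(\ref{eq12.16}) gives the Burke-type statement that the exit flows $(\eta^+_y)_{y\in\partial^+\bar S}$ and $(\eta^-_y)_{y\in\partial^-\bar S}$ are again i.i.d.\ $\exp(\alpha_1)$ and $\exp(\alpha_2)$ and mutually independent. Combined with the corollary~(\ref{eq1lhzx}), this yields $E[H_S(\zeta^\circ,\xi)]=N/\alpha_1+M/\alpha_2$, and Cram\'er's theorem applied to each of the two independent exponential sums in~(\ref{eq1lhzx}) gives two-sided exponential concentration of $H_S/N$ around $1/\alpha_1+\beta/\alpha_2$.

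The upper bound on $G(N,M)$ is then immediate. Using the decomposition $\xi_y=\sum_j w_j\,\I_{L(\ell_j)}(y)$ of Theorem~\ref{theo1etaell} and the geometric fact that any oriented path from $(1,1)$ to $(N,M)$ crosses each $L(\ell_j)$ in at most one point, the argument of Proposition~\ref{prop1llpp} extends without change to yield $G(N,M)\leq H_S(\zeta^\circ,\xi)$ for any $\zeta^\circ\geq 0$. Minimising $1/\alpha_1+\beta/\alpha_2$ subject to $\alpha_1+\alpha_2=\alpha$ is attained at $\alpha_1=\alpha/(1+\sqrt\beta)$, $\alpha_2=\alpha\sqrt\beta/(1+\sqrt\beta)$ and equals $(1+\sqrt\beta)^2/\alpha$; combined with the concentration of $H_S$ above, this gives the upper half of~(\ref{eq1concentexp}).

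For the matching lower bound I would exploit the remark preceding the theorem that under the stationary boundary data the boundary contributes nothing asymptotically to $H_S$. Split the maximal broken lines of Theorem~\ref{theo1etaell} into those with an interior left corner and the \emph{boundary-only} lines (those with $L(\ell_j)=\emptyset$). The algorithm of Proposition~\ref{prop1llpp}, run with $\zeta^\circ\neq 0$, still produces an oriented path crossing $L(\ell_j)$ for every line in the first class; hence $G(N,M)\geq H_S(\zeta^\circ,\xi)-\sum_{j\colon L(\ell_j)=\emptyset}w_j$. The main obstacle is to show that the total boundary-only weight is $o(N)$ with exponentially small failure probability at the optimal $(\alpha_1,\alpha_2)$: heuristically, under stationarity almost every unit of entering flow is absorbed by some interior $\xi$-birth before exiting $S$, and a careful use of reversibility (together with the fact that the entering and exiting marginals have the same intensities) should make this quantitative. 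Combined with the concentration of $H_S$, this yields the lower tail of~(\ref{eq1concentexp}), and the almost sure law of large numbers follows by Borel--Cantelli.
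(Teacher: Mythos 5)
Your upper bound is the same as the paper's: attach reversible boundary data, use $G=H_S(\xi)\leqslant H_S(\zeta^\circ,\xi)$, express $H_S(\zeta^\circ,\xi)$ via the corollary as a sum of i.i.d.\ exponentials, and optimize over $\alpha_1+\alpha_2=\alpha$. That part is fine (you do not even need the Burke-type output statement there, since the corollary already writes $H_S(\zeta^\circ,\xi)$ in terms of the \emph{entering} flows, which are i.i.d.\ by construction).

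The lower bound, however, has a genuine gap, and it is not only the absorption estimate you flag as the ``main obstacle''. The inequality you build on, $G(N,M)\geqslant H_S(\zeta^\circ,\xi)-\sum_{j\colon L(\ell_j)=\emptyset}w_j$, is false. The algorithm of Proposition~\ref{prop1llpp} run on $\bar\eta(\zeta^\circ,\xi)$ with $\zeta^\circ\neq0$ does \emph{not} produce a path in $\Pi_S$: the local rule~(\ref{eq1algorithm}) drives the walker out of $S$ through the western boundary (it produces the optimal path of the \emph{enlarged} domain $S'$, not of $S$), and no single oriented path from $\partial_0S$ to $\partial_1S$ need meet a left corner of every line having one. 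Concretely, take $N=M=2$ with sites $a=(0,0)$, $b=(1,1)$, $c=(1,-1)$, $d=(2,0)$, all $\xi\equiv1$, and a single large entering mass $\zeta^+_c=W$ with the other boundary flows zero. One checks that the crossing lines with nonempty left-corner sets have their (unique) left corners at $a$, $b$, $c$ and $d$ respectively, each with weight $1$, so $\sum_{j\colon L(\ell_j)\neq\emptyset}w_j=4$, while any $\pi\in\Pi_S$ visits only three of these four sites and $G=3$; the boundary-only weight is carried entirely by the monotone line of weight $W-2$, so your claimed bound would read $3\geqslant 4$. The point is that entering mass which \emph{is} absorbed by an interior birth produces a line whose left corner sits ``behind'' the corners of other lines, forcing the maximizing path to choose; the quantity you must control is the total weight of lines whose corners the best path is forced to miss, which is exactly $H_S(\zeta^\circ,\xi)-G(N,M)$ itself --- so the argument becomes circular. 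This loss is $\Theta(N)$ for non-characteristic $(\alpha_1,\alpha_2)$, so it cannot be dismissed by a soft stationarity remark even at the optimal parameters. The paper handles this by a different mechanism: it converts $\zeta^\circ$ into births on an enlarged domain $S'$ so that $H_{S'}(\xi')=H_S(\zeta^\circ,\xi)$, decomposes the optimal path of $S'$ as a boundary prefix of length $n$ plus an optimal path in a sub-rectangle $\tilde S$ (relations~(\ref{eq1somenopt})--(\ref{eq1allnopt})), and then shows via the strict concavity inequality~(\ref{eq1comparetoolate}) and the large-deviation bound~(\ref{eq1llemma}) that a long boundary prefix makes it ``too late'' to recover, while a short prefix contributes only $o(N)$. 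Some argument of this type, quantifying over the exit point $n$ from the boundary, is needed to close your lower bound.
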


\begin{proof}
The central idea of the proof may be hidden among all the calculations,
basically it consists on the following argument.

By Proposition~\ref{prop1llpp} and the laws of large numbers for i.i.d. exponential r.v.'s,
$$
   G(N,\beta N) =  H_S(\xi) 
  \leqslant   H_S(\zeta^\circ,\xi) =  \sum_{y\in\partial_-S} \zeta_y^+ + 
  \sum_{y\in\partial^-S} \eta_y^- \approx \frac N {\alpha_+} + \frac {\beta N}
{\alpha_-},
$$
where $S$ is a rectangular domain with $N\times \beta N $ sites.
Here $\alpha_+$ and $\alpha_-$ can be any pair of positive numbers that make
$\alpha=\alpha_++\alpha_-$ and therefore the broken line process reversible when
the $\zeta^\pm$ are distributed as $\exp(\alpha_\pm)$ and the $\xi$ are
distributed as $\exp(\alpha)$.
As a consequence we have
\[
 \lim \frac1NG(N,\beta N) 
 \leqslant \inf_{\alpha_+,\alpha_-} \left[ \frac 1 {\alpha_+} + \frac \beta
{\alpha_-} \right ]
  = \frac{\left(1+\sqrt\beta\right)^2}\alpha,
\]
the infimum being attained for $\alpha_\pm = \alpha/(1+\beta^{\pm1/2})$.
Now we want to have the opposite inequality. We argue that
$H_S(\zeta^\circ,\xi)$ cannot be much bigger than $H_S(\xi)$. To compare both,
consider, instead of  boundary conditions $\zeta^\pm$ in $\partial S$, a
slightly enlarged domain $S'$ without boundary conditions but where to each
extra site we associate $\xi'$ corresponding to the previous $\zeta^\pm$, so
that $H_{S'}(\xi')=H_S(\zeta^\circ,\xi)$. Now for $H_{S'}(\xi')$ to be
considerably bigger than $H_S(\xi)$, it must be the case that the $\xi'$-optimal
path in $S'$ occupies a positive fraction $\epsilon$ of the boundary
$\partial_+S'$ and then takes a $\xi$-optimal path 
in the remaining $(N\times(1-\epsilon)\beta N)$-rectangle. But in fact it
happens that an oriented walker, after visiting $\epsilon\beta N$ sites in
$\partial_+ S$, looks ahead and realizes it is far too late to perform the
$\xi'$-optimal path, as the following equation shows:
\begin{equation}
\label{eq1comparetoolate}
 \epsilon\beta \frac{1+\sqrt{1/\beta}}\alpha +
\frac{(1+\sqrt{(1-\epsilon)\beta})^2}\alpha 
 = \frac{(1+\sqrt\beta)^2}{\alpha} - \frac{2\sqrt\beta}\alpha f(\epsilon).
\end{equation}
Here
\[
 f(\tau) = 1 - \tau/2 - \sqrt{1-\tau}
\]
is positive and increasing in $(0,\infty)$. Therefore the $\xi'$-optimal path in
$S'$ cannot stay too long at the boundary of $S'$ and thus 
$H_S(\xi) \approx H_{S'}(\xi')=H_{S}(\zeta^\circ,\xi)$.

Now let us move to the proper mathematical proof.

We shall use the following basic fact.
Given $\alpha_1,\alpha_2,\alpha_3,\rho,\delta,K>0$, there exist positive
constants $c,C>0$ such that if $(X_j)$, $(Y_j)$ and $(Z_j)$ are sequences of
i.i.d. r.v.'s distributed respectively as $\exp(\alpha_1)$, $\exp(\alpha_2)$ and
$\exp(\alpha_3)$, then for all $N\in\N$,
\begin{equation}
 \label{eq1llemma}
 P\left\{
 \begin{array}{r}
 \exists l,m,n\in \{0,\dots,\lceil\rho
N\rceil\}:\left|\sum_{j=1}^lX_j+\sum_{j=1}^mY_j + 
 \right.\\ \left. 
 + \sum_{j=1}^nZ_j-\frac l{\alpha_1} - \frac m{\alpha_2} - \frac n{\alpha_3}
\right|
 \geqslant \delta N - K
 \end{array}
  \right\} \leqslant Ce^{-cN},
\end{equation}
regardless of the joint distribution of $(X,Y,Z)$.

We first map our problem in $\Z^2$ to the space $\tilde\Z^2$, where the theory
of broken lines was developed. We do so by considering the rectangular domain
$S(N,M)$ given by $S(N,M)=\{(t,x)\in\tilde\Z^2:0\leqslant t+x \leqslant 2(M-1),
0\leqslant t-x \leqslant 2(N-1)\}$ and the obvious mapping between $S(N,M)$ and
$\{1,\dots,N\}\times\{1,\dots,M\}$. We write $S$ for $S(N,M)$.

In order to define a reversible broken line process in $S(N,M)$ with creation
$\xi\sim\exp(\alpha)$ we can choose $\alpha_+$ and $\alpha_-$ such that
$\alpha_++\alpha_-=\alpha$ and let $\zeta^+\sim\exp(\alpha_+)$,
$\zeta^-\sim\exp(\alpha_-)$. Take $M= M(N) = \lfloor\beta N\rfloor$. Choosing
$\alpha_+ = \alpha/(1+\beta^{1/2})$ and $\alpha_- = \alpha/(1+\beta^{-1/2})$
gives
\begin{equation}
  \label{eq1alphaopt}
  \frac 1 {\alpha_+} + \frac \beta {\alpha_-} =
\frac{\left(1+\sqrt\beta\right)^2}\alpha.
\end{equation}

By Proposition~\ref{prop1llpp},
\[
 G(N,M) = H_S(\xi) \leqslant H_S(\zeta^\circ,\xi) = \sum_{y\in\partial_-S}
\zeta_y^+
  + \sum_{y\in\partial^-S} \eta_y^-,
\]
and it follows from~(\ref{eq1llemma},\ref{eq1alphaopt}) that
\begin{equation}
 \label{eq1concentg}
 P\left\{ \frac1NG(N,M) > \frac{\left(1+\sqrt\beta\right)^2}\alpha +
\delta\right\} \leqslant C_0e^{-c_0N}.
\end{equation}

Now let us prove the lower bound to complete the concentration inequality above.
Consider 
$S'(N,M)=\{(t,x)\in\tilde\Z^2:-2\leqslant t+x \leqslant 2(M-1), -2\leqslant t-x
\leqslant 2(N-1)\}$ and for $0\leqslant n\leqslant\beta N$, take \\
$\tilde S(N,M-n)=\{(t,x)\in\tilde\Z^2: 2n\leqslant t+x \leqslant 2(M-1),
0\leqslant t-x \leqslant 2(N-1)\}$, \\
$\tilde S'(N,M-n)=\{(t,x)\in\tilde\Z^2:
2n-2\leqslant t+x \leqslant 2(M-1), -2\leqslant t-x \leqslant 2(N-1)\}$, \\
$\tilde S(N-n,M)=\{(t,x)\in\tilde\Z^2: 0\leqslant t+x \leqslant 2(M-1),
2n\leqslant t-x \leqslant 2(N-1)\}$, \\ 
$\tilde S'(N-n,M)=\{(t,x)\in\tilde\Z^2: -2\leqslant t+x \leqslant 2(M-1),
2n-2\leqslant t-x \leqslant 2(N-1)\}$.

For $(\zeta^\circ,\xi)$ defined on $S$ take $\xi'$ on $S'$ given by
\[
  \xi'_{t,x} = 
  \begin{cases}
     \xi_{t,x}, & y\in S \\
     \zeta^-_{t+1,x-1}, & t+1,x-1\in \partial_+S \\ 
     \zeta^+_{t+1,x+1}, & t+1,x+1\in \partial_-S \\ 
     0 & \mbox{otherwise}.
  \end{cases}
\]
For $(\tilde\zeta^\circ,\xi)$ defined on $\tilde S$ take $\tilde\xi'$ on $\tilde S'$ given by the analogous formulae and notice that $H_S(\zeta^\circ,\xi)=H_{S'}(\xi')$ and $H_{\tilde S}(\tilde\zeta^\circ,\xi)=H_{\tilde S'}(\tilde\xi')$.

The two facts below will be important:
\begin{eqnarray}
 \label{eq1somenopt}
 \mbox{For some } n ,&&
 H_{S'}(\xi') = \sum_{j=0}^{n}\xi'_{j-1,j+1} + H_{\tilde S(N,M-n)}(\xi) \mbox{
or }\\&&\nonumber
 H_{S'}(\xi') = \sum_{j=0}^{n}\xi'_{j-1,-j-1} + H_{\tilde S(N-n,M)}(\xi).  \\
 \label{eq1allnopt}
 \mbox{For all } n ,&& H_{S}(\xi) \geqslant   H_{\tilde S(N,M-n)}(\xi) \mbox{
and } \\&&\nonumber
 H_{S}(\xi) \geqslant   H_{\tilde S(N-n,M)}(\xi).
\end{eqnarray}
Given any $0<\delta'<\delta$, by putting (\ref{eq1somenopt}) and
(\ref{eq1allnopt}) together we see that for
\begin{equation}
 \label{eq1smalllpp}
 G(N,M) \leqslant N\left[ \frac{\left(1+\sqrt\beta\right)^2}\alpha - \delta
\right]
\end{equation}
to hold, we must have either
\begin{equation}
 \label{eq1ldpreversiblebl}
 H_{S'}(\xi') \leqslant N\left[ \frac{\left(1+\sqrt\beta\right)^2}\alpha -
\delta' \right],
\end{equation}
or, for some $n\in\{1,\dots,M-n\}$,
\begin{eqnarray}
 & \left( \sum_{j=0}^n \zeta_{j,j}^- \right ) + H_{\tilde S(N,M-n)}(\tilde\xi)
\geqslant 
 N\left[ \frac{\left(1+\sqrt\beta\right)^2}\alpha - \delta' \right]
 \label{eq1largewayaround} \\
 & \sum_{j=0}^n \zeta_{j,j}^- \geqslant N[\delta-\delta']
 \label{eq1largerecovering},
\end{eqnarray}
or, for some $n\in\{1,\dots,N-n\}$,
\begin{eqnarray}
 & \left( \sum_{j=0}^n \zeta_{j,-j}^+ \right ) + H_{\tilde S(N-n,M)}(\tilde\xi)
\geqslant 
 N\left[ \frac{\left(1+\sqrt\beta\right)^2}\alpha - \delta' \right]
 \label{eq1largewayaround2} \\
 & \sum_{j=0}^n \zeta_{j,-j}^+ \geqslant N[\delta-\delta']
 \label{eq1largerecovering2}.
\end{eqnarray}

The probability of (\ref{eq1ldpreversiblebl}) decays exponentially fast and
this can be shown exactly as was done for (\ref{eq1concentg}).

We consider now the other possibility,
(\ref{eq1largewayaround},\ref{eq1largerecovering}).
The case (\ref{eq1largewayaround2},\ref{eq1largerecovering2}) is treated in a
completely analogous way and is thus omitted. Let $\delta>0$ be fixed, take
$\epsilon_0 = {\delta\alpha}/[{2\beta(1+\sqrt{1/\beta})}]$ and $\delta' =
\frac\delta3 \wedge [\frac{\sqrt{\beta}}{\alpha} f(\epsilon_0)]$. With this
choice of parameters 
\begin{equation}
 \label{eq1epssmallrecov}
 \epsilon\beta\frac{1+\sqrt{1/\beta}}\alpha \leqslant \frac
 \delta2 < \delta-\delta'
\end{equation} 
holds for $\epsilon\leqslant\epsilon_0$ and
\begin{eqnarray}
 \begin{array}{c}
 \label{eq1longsum}
 \displaystyle  \epsilon\beta\frac{1+\sqrt{1/\beta}}\alpha + 
 (\beta-\epsilon\beta)\frac{1+(\beta-\epsilon_0\beta)^{-1/2}}\alpha +
\frac{1+\sqrt{\beta-\epsilon_0\beta}}\alpha 
  \leqslant \\ \qquad \qquad \qquad \qquad \qquad \qquad 
  \displaystyle \leqslant \frac{(1+\sqrt\beta)^2}\alpha - 2 \delta'
 \end{array}
\end{eqnarray}
holds for $\epsilon\geqslant\epsilon_0$.

Consider the event that (\ref{eq1largerecovering}) happens for some $0\leqslant
n\leqslant \lceil \epsilon_0\beta N \rceil = M_0$.
Since $\zeta_j^- =_d \exp\big(\alpha/(1+\sqrt{1/\beta})\big)$, it follows from~(\ref{eq1epssmallrecov}) and~(\ref{eq1llemma}) that the probability of this event decays exponentially fast in $N$.

It remains to consider $n \geqslant M_0$ and show that in this case it is the
probability of~(\ref{eq1largewayaround}) that decays exponentially fast. Now
\begin{eqnarray*}
&&
P\left\{
 \exists n \in\{M_0,\dots,M\}: 
 \sum_{j=0}^n \zeta_{j,j}^- + H_{\tilde S}(\xi) \geqslant 
 N \frac{\left(1+\sqrt\beta\right)^2}\alpha - N\delta'
\right\}
\leqslant \\&&
P\left\{
 \exists n \in\{M_0,\dots,M\}: 
 \sum_{j=0}^n \zeta_{j,j}^- + H_{\tilde S'}(\tilde\xi') \geqslant 
 N \frac{\left(1+\sqrt\beta\right)^2}\alpha - N\delta'
\right\}
= \\&&
P\left\{
 \exists n : 
 \sum_{j=0}^n \zeta_{j,j}^-  + \sum_{j=n}^{M-1} \tilde\zeta_{j,j}^- 
 + \sum_{j=0}^{N-1} \tilde\eta^+_{M-1+j,M-1-j} \geqslant 
 N \frac{\left(1+\sqrt\beta\right)^2}\alpha - N\delta'
\right\},
\end{eqnarray*}
where $\tilde\zeta^\pm$ are distributed as
$\exp(\alpha/[1+(\beta-\epsilon_0\beta)^ {\pm1/2}])$ so that the broken line
process on $\tilde S$ with $\xi$ distributed as $\exp(\alpha)$ is reversible,
and therefore the $\tilde\eta^\pm$ are also distributed as the
$\tilde\zeta^\pm$.

By~(\ref{eq1longsum}) the right-hand side of the inequality in the last line is
greater than
\[
  n \frac{1+\sqrt{1/\beta}}\alpha + 
  (M-n)\frac{1+(\beta-\epsilon_0\beta)^{-1/2}}\alpha +
  N\frac{1+\sqrt{\beta-\epsilon_0\beta}}\alpha + N \delta'
\]
and by~(\ref{eq1llemma}) the last probability above decays exponentially fast.
The proof is finished.
\end{proof}

\begin{theorem}
 \label{theo1llngeo}
 Suppose $\xi_y$,  
 $y\in\N^2$,
 are i.i.d and distributed as ${\rm Geom}(\lambda)$, $\lambda\in(0,1)$ and let
$\beta>0$ be fixed. Then a.s.
$$
  \lim_{N\to\infty} \frac1NG(N,\lfloor\beta N\rfloor) = \frac{
\left(1+\sqrt{\beta \lambda}\right)^2} {1-\lambda} -1.
$$
For each $\delta>0$, there exists $c=c(\delta)>0$ such that
\[
 P\left\{\left| \frac1NG(N,\lfloor\beta N\rfloor) - \left( \frac{
\left(1+\sqrt{\beta \lambda}\right)^2} {1-\lambda} -1\right) \right| >
\delta\right\} \leqslant e^{-cN}
\]
for all $N\in\N$.
\end{theorem}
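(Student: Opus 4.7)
The plan is to follow the same three-step strategy as in the proof of Theorem~\ref{theo1exp}, replacing the exponential distribution by the geometric one. First I would tune the boundary conditions so that the broken line process is reversible, obtain the upper bound via a law of large numbers on the boundary, and finally argue that the boundary contribution is the asymptotically dominant one, yielding the matching lower bound. Throughout, the one-dimensional large-deviation input~(\ref{eq1llemma}) can be invoked with geometric summands (e.g.\ via Cram\'er's theorem), so that concentration transfers automatically.

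More concretely, map $\{1,\dots,N\}\times\{1,\dots,M\}$ into a rectangular domain $S=S(N,M)\subset\tilde\Z^2$ and let $M=\lfloor\beta N\rfloor$. By Theorem~\ref{theo2dualgeoexp} the process is reversible when $\zeta^\pm\sim\mathrm{Geom}(\lambda_\pm)$ with $\lambda_+\lambda_-=\lambda$. The associated means are $\E[\zeta^\pm]=\lambda_\pm/(1-\lambda_\pm)$, so the constrained minimization
\[
 \inf_{\lambda_+\lambda_-=\lambda}\Bigl[\frac{\lambda_+}{1-\lambda_+}+\frac{\beta\lambda_-}{1-\lambda_-}\Bigr]
\]
is solved by
\[
 \lambda_+=\frac{\lambda+\sqrt{\beta\lambda}}{1+\sqrt{\beta\lambda}},\qquad
 \lambda_-=\frac{\lambda+\sqrt{\lambda/\beta}}{1+\sqrt{\lambda/\beta}},
\]
with optimum value $\frac{(1+\sqrt{\beta\lambda})^2}{1-\lambda}-1$, which is the claimed constant. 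Proposition~\ref{prop1llpp} then gives
\[
 G(N,M)=H_S(\xi)\leqslant H_S(\zeta^\circ,\xi)=\sum_{y\in\partial_-S}\zeta_y^+ + \sum_{y\in\partial^-S}\eta_y^-,
\]
and, since reversibility guarantees that the exiting flows $\eta^-$ on $\partial^-S$ are i.i.d.\ $\mathrm{Geom}(\lambda_-)$, the geometric version of~(\ref{eq1llemma}) yields the upper bound of the concentration inequality, just as in~(\ref{eq1concentg}).

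For the lower bound I would transplant boundary conditions into an enlarged domain $S'$ by extending $\xi$ to $\xi'$, using $\xi'=\zeta^\pm$ on the new boundary sites, so that $H_S(\zeta^\circ,\xi)=H_{S'}(\xi')$. The same dichotomy as in~(\ref{eq1somenopt})-(\ref{eq1largerecovering2}) applies: either $H_{S'}(\xi')$ is already close to its expected value (handled by the upper bound argument), or the $\xi'$-optimal path spends a fraction $\epsilon\geqslant\epsilon_0$ of its length on the enlarged boundary and still achieves the reversible value on the remaining $N\times(1-\epsilon)\beta N$ sub-rectangle. The geometric analog of the numerical identity~(\ref{eq1comparetoolate}) is the key computation: one checks that the map
\[
 \epsilon\,\longmapsto\,\epsilon\beta\,\frac{\lambda_-}{1-\lambda_-}+\Bigl[\frac{(1+\sqrt{(1-\epsilon)\beta\lambda})^2}{1-\lambda}-1\Bigr]
\]
is strictly smaller than $\frac{(1+\sqrt{\beta\lambda})^2}{1-\lambda}-1$ by a positive amount $2\delta'(\epsilon_0)$ for $\epsilon\geqslant\epsilon_0$; this is the geometric counterpart of the function $f$ in the exponential proof. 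Splitting on whether $n\leqslant\epsilon_0\beta N$ or $n\geqslant\epsilon_0\beta N$, the small-$n$ case is killed by a direct application of~(\ref{eq1llemma}) to $\sum_{j=0}^n\zeta^-_{j,j}$, while the large-$n$ case is killed by using reversibility on an intermediate domain $\tilde S$ (with slightly different geometric parameters $\tilde\lambda_\pm$ matching the reduced aspect ratio $\beta-\epsilon_0\beta$) together with the strict gap $2\delta'$ obtained above.

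The main obstacle I expect is not conceptual but computational: the algebra giving the optimum $\lambda_\pm$ and, above all, the verification of the strict concavity-type gap that replaces the exponential identity~(\ref{eq1comparetoolate}) requires some care, and the constant $-1$ in the statement means that one must track cleanly the shift between the mean $\lambda_\pm/(1-\lambda_\pm)$ and the cleaner quantity $1/(1-\lambda_\pm)$ that naturally appears in the generating function of geometric sums. Once this identity is established, all the remaining steps --- the reversible decomposition, the extension $S\subsetneq S'$, the case analysis on the fraction $\epsilon$, and the application of~(\ref{eq1llemma}) --- are mechanical translations of the exponential proof.
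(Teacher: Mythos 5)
Your proposal is correct and follows essentially the same route as the paper, which itself treats this theorem as a verbatim translation of the exponential case: the same reversible boundary tuning $\lambda_+\lambda_-=\lambda$ with $\lambda_+=(\lambda+\sqrt{\beta\lambda})/(1+\sqrt{\beta\lambda})$, the same upper bound via Proposition~\ref{prop1llpp} and the geometric version of~(\ref{eq1llemma}), and the same enlarged-domain dichotomy with the gap identity involving $f(\epsilon)$, which matches the paper's displayed replacement for~(\ref{eq1comparetoolate}) exactly. No gaps.
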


\begin{proof}
The proof is absolutely identical to that of the previous theorem, so we just
highlight which equations should be replaced by their analogous.

In the heuristic part take $\lambda_+=[(\lambda+\sqrt{\beta
\lambda})/(1+\sqrt{\beta \lambda})]\in(\lambda,1)$ and
$\lambda_-=\lambda/\lambda_+\in(\lambda,1)$, so that
$\lambda=\lambda_+\lambda_-$, the process is reversible for
$\zeta^\pm\sim\exp(\lambda_\pm)$ and 
$$
  \frac{\lambda_+}{1-\lambda_+}+\beta\frac{\lambda_-}{1-\lambda_-} = 
  \frac{ \left(1+\sqrt{\beta \lambda}\right)^2} {1-\lambda} -1.
$$
Instead of (\ref{eq1comparetoolate}) consider
\[
 \epsilon\beta\frac{\lambda_-}{1-\lambda_-} +
\frac{(1+\sqrt{(1-\epsilon)\beta\lambda})^2}{1-\lambda}-1
 = \frac{(1+\sqrt{\beta \lambda})^2}{1-\lambda}-1 - \frac{2\sqrt{\beta
\lambda}}{1-\lambda}f(\epsilon).
\]

The proof of
\[
 P\left\{ \frac1NG(N,\lfloor\beta N\rfloor) > \frac{ \left(1+\sqrt{\beta
\lambda}\right)^2} {1-\lambda} -1 + \delta\right\} \leqslant e^{-cN}
\]
is analogous to the proof of (\ref{eq1concentg}).

For the opposite inequality, we take
$\epsilon_0=\delta(1-\lambda)/[2(\lambda\beta+\sqrt{\lambda\beta})]$, $\delta' =
\frac\delta3 \wedge [\frac{\sqrt{\beta\lambda}}{1-\lambda} f(\epsilon_0)]$ so
that instead of (\ref{eq1epssmallrecov}) and (\ref{eq1longsum}) the following
estimates will hold, respectively for $\epsilon\leqslant\epsilon_0$ and
$\epsilon\geqslant\epsilon_0$:
\[
 \epsilon\beta \frac{\beta\lambda+\sqrt{\beta\lambda}}{\beta(1-\lambda)} 
 \leqslant \frac \delta2 < \delta-\delta',
\]
\begin{eqnarray*}
 \epsilon\beta \frac{\beta\lambda+\sqrt{\beta\lambda}}{\beta(1-\lambda)} +
 (\beta-\epsilon\beta)\frac{(\beta-\epsilon_0\beta)
\lambda+\sqrt{(\beta-\epsilon_0\beta)\lambda}}{
(\beta-\epsilon_0\beta)(1-\lambda)}  +
 \frac{\lambda+\sqrt{(\beta-\epsilon_0\beta)\lambda}}{1-\lambda}
 \leqslant \\ \leqslant \frac{(1+\sqrt{\beta\lambda})^2}{1-\lambda} - 1 - 2
\delta'.
\end{eqnarray*}
The rest of the proof is the same.
\end{proof}

\appendix

\section{Existence of the maximal broken line}
\label{sec1maximal}

Here we prove that given a flow field and a broken trace there is a maximal
broken line associated to that field and having that trace.

We claim that there exist unique $J_1=(a_1,b_1], J_2=(a_2,b_2],
\dots,J_n=(a_n,b_n]$ such that 
\begin{equation}
 \label{eq1gammaell}
 (e_1,J_1)\sim(e_2,J_2)\sim\cdots\sim(e_n,J_n),
\end{equation}
and such that any $p_1,p_2,\dots,p_n$ with property
\begin{equation}
 \label{eq1pgammaell}
 (e_1,p_1)\sim(e_2,p_2)\sim\cdots\sim(e_n,p_n)
\end{equation} must satisfy $p_i\in J_i,\ i=1,\dots,n$.
If $J_1\ne\emptyset$ we define $\gamma(\ell) =
(y_0,e_1,J_1,y_1,\dots,e_n,J_n,y_n)$ and by~(\ref{eq1gammaell}) we have
$\gamma(\ell) \in B(\bar\eta)$; otherwise $\gamma(\ell)=\emptyset$.

To prove the claim start by observing some consequences of the association
rules.
\\ 1. If $(e_1,p_1)\sim(e_2,p_2)$ then $(e_1,p_1-\delta)\sim(e_2,p_2-\delta)$
for some $\delta>0$.
\\ 2. If $(e_1,p_1^k)\sim(e_2,p_2^k)\ \forall\ k\in\N$ with $p_1^k\uparrow p_1$
then $p_2^k\uparrow p_2$ and $(e_1,p_1)\sim(e_2,p_2)$.
\\ 3. If $(e_1,p_1)\sim(e_2,p_2)$ and $(e_1,p_1')\sim(e_2,p_2')$ with $p_1'>p_1$
then $p_1'-p_2'=p_1-p_2$ and $(e_1,(p_1,p_1'])\sim(e_2,(p_2,p_2'])$.

Now let $A$ be the set of $p_1\in(0,\eta(e_1)]$ for which it is possible to find
$p_2,p_3,\dots,p_n$ such that~(\ref{eq1pgammaell}) holds. Suppose that
$A\ne\emptyset$ and take $a_1=\inf A$, $b_1=\sup A$. (When $A=\emptyset$ we take
$J_i=\emptyset$ and the desired properties hold trivially.) Consider a sequence
$(p_1^k)\subseteq A$ with $p_1^k\uparrow b_1$. By Property~2 above we have
$(e_1,b_1)\sim\cdots\sim(e_n,b_n)$ and $b_1\in A$. It follows from Property~1
that $a_1<b_1$ and we can take another sequence $(p_1^k)\subseteq A$ with
$b_1>p_1^k\downarrow a_1$, $(e_1,p_1^k)\sim\cdots\sim(e_n,p_n^k)$. By property~3
$(e_1,(p_1^k,b_1])\sim\cdots\sim(e_n,(p_n^k,b_n])$ and $b_i>p_i^k\downarrow a_i$
for $i=1,\dots,n$; thus $(e_1,(a_1,b_1])\sim\cdots\sim(e_n,(a_n,b_n])$ and
$b_1-a_1=\cdots=b_n-a_n$. If it were the case that $a_1>0$ and $a_1\in A$, by
Property~1 it would hold that $a_1-\delta\in A$ contradicting $a_1=\inf A$.
Therefore $A=(a_1,b_1]$. Suppose $p_1,\dots,p_n$ satisfy (\ref{eq1pgammaell});
by definition $p_1\in A=(a_1,b_1]$ and by Property~3 we have $b_i-p_i=b_1-p_1\in
[0,b_1-a_1)=[0,b_i-a_i)$, that is, $p_i\in(a_i,b_i]$. As a consequence we have
that $J_i'\subseteq(a_i,b_i], i=1,\dots,n$ whenever
$(e_1,J_1')\sim\cdots\sim(e_n,J_n')$, from which uniqueness follows.

\section{Proof of Lemma~\ref{lemma1lorder}}
\label{sec1lorder}

We start proving Item~\ref{item1partorder}.
Relation $\succeq$ is obviously reflexive.
We now show that it is antisymmetric. Let $\ell\succeq\ell'\succeq\ell$, we want
to prove that $\ell=\ell'$. First suppose $D(\ell)\cap D(\ell')\ne\emptyset$ and
take $x_0\in D(\ell)\cap D(\ell')$. Write
$D(\ell)=\{x_{-n},\dots,x_0,\dots,x_m\}$,
$D(\ell')=\{x_{-n'},\dots,x_0,\dots,x_{m'}\}$ and $D(\ell)\cap
D(\ell')=\{x_{-\tilde n},\dots,x_0,\dots,x_{\tilde m}\}$ with $x_{i+1}=x_i+1$
and $\tilde m = m\wedge m', \tilde n = n\wedge n'$.
Since $\ell\succeq\ell'\succeq\ell$ we have $t(x_i)=t'(x_i)$ for $i=-\tilde
n,\dots,\tilde m$. All we need to show is that $n=n'$ and $m=m'$.
Suppose $\tilde m=m'$. Then $(t'(x_{\tilde m}),x_{\tilde m})\in\partial_+\bar
S\cup\partial^+\bar S$; and since $t(x_{\tilde m})=t'(x_{\tilde m})$ we cannot
have $m>\tilde m$, thus $m=\tilde m$. By the same argument, if $\tilde m=m$ we
conclude $m'=\tilde m$, therefore $m=\tilde m=m'$. Similarly we show that
$n=n'=\tilde n$.
It remains to consider the case $D(\ell)\cap D(\ell')\ne\emptyset$, which is
ruled out by the following claim.
\begin{claim}
 \label{claim1empty}
 If $\ell,\ell'\in C(S)$ and $D(\ell)\cap D(\ell')=\emptyset$ then $I(\ell)\cap
 I(\ell')=\emptyset$.
\end{claim}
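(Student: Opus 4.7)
The plan is to straighten the geometry via a $45^\circ$ change of variables: set $u=(t+x)/2$ and $v=(t-x)/2$. In these coordinates the rectangular domain $S$ becomes an axis-aligned rectangle which, after translation, we may write as $[0,A]\times[0,B]$, and $\bar S$ is this rectangle enlarged by one unit on every side with its four outermost corners removed. A step $x_i=x_{i-1}+1,\ t_i=t_{i-1}\pm1$ of a broken trace translates into either $(u,v)\mapsto(u+1,v)$ (when $t$ goes up) or $(u,v)\mapsto(u,v-1)$ (when $t$ goes down), so a trace is precisely a monotone lattice path that at each step moves one unit right or one unit down. Matching the defining conditions of the four pieces of $\partial\bar S$ to the four sides of the rectangle, one checks that $\partial_-\bar S,\ \partial^-\bar S,\ \partial_+\bar S,\ \partial^+\bar S$ correspond, respectively, to $\{u=-1\}$, $\{v=B+1\}$, $\{v=-1\}$, and $\{u=A+1\}$. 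Hence any $\ell\in C(S)$ enters through the left or top side and exits through the bottom or right side.

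Given $\ell,\ell'\in C(S)$ with $D(\ell)\cap D(\ell')=\emptyset$, assume without loss of generality $\max D(\ell)<\min D(\ell')$, i.e.\ $x_n=u_n-v_n<u'_0-v'_0=x'_0$, where subscripts $0$ and $n$ denote the starting and ending vertices of each trace. The key step is a case analysis over the four combinations of (exit side of $\ell$)$\times$(entry side of $\ell'$). Substituting the coordinate restrictions imposed by each side into the inequality $x_n<x'_0$ rules out two of them: ``$\ell$ exits on the right, $\ell'$ enters on the top'' forces $u'_0+v_n>A+B+2$, which is impossible since $u'_0\leqslant A$ and $v_n\leqslant B$; ``$\ell$ exits on the bottom, $\ell'$ enters on the left'' gives $x_n\geqslant1>-1\geqslant x'_0$. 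Only two genuine cases remain: either $\ell$ exits on the bottom and $\ell'$ enters on the top, forcing $u'_0\geqslant u_n+B+3$, or $\ell$ exits on the right and $\ell'$ enters on the left, forcing $v_n\geqslant A+v'_0+3$.

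From here monotonicity makes the conclusion automatic. Every point $(u,v)$ on the trace $\ell$ satisfies $u_0\leqslant u\leqslant u_n$ and $v_n\leqslant v\leqslant v_0$, so $t=u+v\in[u_0+v_n,\,u_n+v_0]$; the analogous inclusion holds for $\ell'$. In the first case, substituting $v_0\leqslant B+1$ and $v'_{n'}\geqslant-1$ gives $\max I(\ell)\leqslant u_n+B+1<u'_0-1\leqslant\min I(\ell')$, so $I(\ell)\cap I(\ell')=\emptyset$; the second case is handled by the symmetric estimate in the $v$-coordinate. The only real obstacle is the case-by-case bookkeeping; once the coordinate change is in place, the bounds on $I(\ell)$ and $I(\ell')$ are one-line consequences of monotonicity of lattice paths in a rectangle.
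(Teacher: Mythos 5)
Your proof is correct, and it takes a genuinely different route from the paper's. The paper argues by contraposition: it picks $\tilde t\in I(\ell)\cap I(\ell')$ attained at heights $x''<x'$, uses the fact that both traces must reach the top corner of the rectangular domain to force the lower trace up and the higher trace down, and exhibits the explicit midpoint $\tfrac{x'+x''}{2}$ as a common element of $D(\ell)\cap D(\ell')$. You instead argue directly: after the $45^\circ$ rotation $u=(t+x)/2$, $v=(t-x)/2$ (under which traces become monotone right/down staircases and the four pieces of $\partial\bar S$ become the four sides of an axis-aligned rectangle), you reduce the hypothesis $\max D(\ell)<\min D(\ell')$ to inequalities on the entry/exit coordinates, eliminate two of the four side-combinations outright, and in the surviving two cases separate $I(\ell)$ and $I(\ell')$ as intervals via the trivial bounds $t=u+v\in[u_0+v_n,\,u_n+v_0]$. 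I checked the coordinate identifications ($\partial_-\bar S\leftrightarrow\{u=-1\}$, $\partial^-\bar S\leftrightarrow\{v=B+1\}$, $\partial_+\bar S\leftrightarrow\{v=-1\}$, $\partial^+\bar S\leftrightarrow\{u=A+1\}$) and the arithmetic in all four cases; everything is consistent with the definitions in the paper. Both arguments use the same underlying fact (a crossing trace must span the rectangle, so its diagonal extents are coupled), and both implicitly require $S$ rectangular, as does the paper's proof via ``the topmost site of $S$.'' What your version buys is a more systematic and checkable computation --- the rotation turns the slope conditions into monotonicity and makes the interval bounds one-liners --- at the cost of an explicit case enumeration; the paper's version is shorter on paper but leans on a geometric picture (which lines the trace ``must cross'') whose verification is precisely the kind of bookkeeping you carry out explicitly.
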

So let us prove the claim. We'll show that $I(\ell)\cap I(\ell')\ne\emptyset$
implies $D(\ell)\cap D(\ell')\ne\emptyset$. Let $\tilde t\in I(\ell)\cap
I(\ell')$ and take $x''\in D(\ell), x'\in D(\ell')$ such that
$t(x'')=t'(x')=\tilde t$. Assume for simplicity $x''<x'$ and let $(\bar t,\bar
x)$ denote the topmost site of $S$. Since $(\tilde t,x')\in\bar S$ we have $\bar
x-\bar t\geqslant x'-\tilde t-2$ and $\bar x +\bar t\geqslant x'+\tilde t-2$.
Now $\ell$, after passing by $(\tilde t,x'')$ when going upwards, must cross
either of the lines $\{(x,t):x-t=\bar x-\bar t\}$ or $\{(x,t):x+t=\bar x+\bar
t\}$, because $\ell\in C(S)$. After crossing either of these lines there will be
$(t_*,x_*)\in\ell$ with $(x_*-1)-(t_*+1)=\bar x - \bar t$ or
$(x_*-1)+(t_*-1)=\bar x + \bar t$, respectively. Therefore $x^*\geqslant
x'-|t'-t_*|$. But by~(\ref{eq1lbroktr})  we have $|t'-t_*|\leqslant|x_*-x''|$,
so $x'-x_*\leqslant |x_*-x''|$. Assuming $x_*\leqslant x'$ (the other
possibility trivially implies the desired result), one has
$|x'-x_*|\leqslant|x''-x_*|$ thus $x_*\geqslant \frac{x'+x''}2$ and therefore
$\frac{x'+x''}2\in D(\ell)$. Analogously we show that $\frac{x'+x''}2\in
D(\ell')$ and the proof is done.

Finally let us see that $\succeq$ is transitive. For a given point
$y_*=(t_*,x_*)\in\bar S$, define $A(t_*,x_*)=\{(t,x)\in \tilde\Z^2: t-x\geqslant
t_*-x_*,t+x\geqslant t_*+x_*\}=\{(t,x)\in \tilde\Z^2: t\geqslant t_*+|x-x_*|\}$ 
and for $\ell\subseteq\bar S$, define $A(\ell)=\cup_{y\in\ell}A(y)$.
Notice that $A(y)\subseteq A(\tilde y)$ iff $y\in A(\tilde y)$, so $A(\ell')\subseteq A(\ell)$ is equivalent to $\ell'\subseteq A(\ell)$. Now let $\ell\succeq\ell'\succeq\ell''$.
It follows from these observations and from the
claim below that $\ell\succeq\ell''$.
\begin{claim}
 Let $\ell,\ell'\in C(S)$.
 Then $\ell'\succeq\ell$ if and only if $\ell'\subseteq
A(\ell)$.
\end{claim}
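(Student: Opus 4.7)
The plan is to prove the two implications separately. For $\ell'\subseteq A(\ell)\Rightarrow\ell'\succeq\ell$, I will use that any two points of a broken trace obey the triangle inequality $|t(x)-t(x_*)|\leqslant|x-x_*|$, which is immediate from~\eqref{eq1lbroktr}. Given $x\in D(\ell)\cap D(\ell')$ and a witness $y_*=(t_*,x_*)\in\ell$ for $(t'(x),x)\in A(\ell)$, this yields $t(x)\leqslant t_*+|x-x_*|\leqslant t'(x)$, giving the first clause of $\succeq$. The second clause follows either from the first (when the domains meet) or, in the disjoint case, by comparing any $y'=(t',x')\in\ell'$ with its witness $y_*\in\ell$ via $t'\geqslant t_*$.

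The reverse direction $\ell'\succeq\ell\Rightarrow\ell'\subseteq A(\ell)$ is more delicate. Fixing $y'=(t',x')\in\ell'$, the plan is to exhibit $y_*\in\ell$ with $y'\in A(y_*)$. If $x'\in D(\ell)$, then $y_*=(t(x'),x')$ works directly by $t'(x')\geqslant t(x')$. If $x'>\max D(\ell)=:x_n$ (the case $x'<\min D(\ell)$ being symmetric), let $y_n=(t_n,x_n)$ be the east endpoint of $\ell$, which lies in $\partial_+\bar S\cup\partial^+\bar S$. When $y_n\in\partial_+\bar S$, so $(t_n+1,x_n-1)\in S$, the point $y_n$ attains the maximum of $x-t$ over all of $\bar S$; hence any $y'\in\bar S$ satisfies $x'-t'\leqslant x_n-t_n$, which is precisely equivalent, for $x'>x_n$, to the cone inequality $t'\geqslant t_n+(x'-x_n)$, so $y_*=y_n$ serves as a witness.

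The main obstacle is the sub-case $y_n\in\partial^+\bar S$, in which no element of $\ell$ can witness $y'\in A(\ell)$ for $y'\in\bar S$ with $x'>x_n$: in this sub-case every $(t_*,x_*)\in\ell$ has $x_*-t_*\leqslant x_n-t_n$, while $y'\in\bar S$ with $x'>x_n$ necessarily violates the corresponding bound. My plan is to argue that this sub-case is vacuous under $\ell'\succeq\ell$. I will split on whether $x_n\in D(\ell')$: if yes, the inequality $t'(x_n)\geqslant t_n$ together with $\ell'\subseteq\bar S$ and the characterization of $\partial^+\bar S$ as the set where $x+t$ attains its maximum on $\bar S$ forces $\ell'$ to pass through $y_n$; but no edge outgoing from $y_n$ in an $x$-increasing direction lies in $\E(\bar S)$, so $y_n$ must be the east endpoint of $\ell'$, contradicting $x'>x_n$. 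If $x_n\notin D(\ell')$ instead, then $D(\ell)\cap D(\ell')=\emptyset$, whence Claim~\ref{claim1empty} gives $I(\ell)\cap I(\ell')=\emptyset$, and the second clause of $\succeq$ forces $\min I(\ell')>\max I(\ell)$. However, the west endpoint of $\ell'$ on $\partial\bar S$ must then have $x$-coordinate exceeding $x_n$, which forces its $t$-coordinate to be very negative; combined with the lower bound $\max I(\ell)\geqslant t_n$, this contradicts $\min I(\ell')>\max I(\ell)$, completing the argument.
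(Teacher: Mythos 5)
Your proof is correct and follows essentially the same route as the paper's: the ``if'' direction via the triangle inequality $|t(x)-t(x_*)|\leqslant|x-x_*|$ along a broken trace, and the ``only if'' direction by splitting according to whether $x'$ lies in $D(\ell)$ or beyond it and then classifying the relevant endpoint of $\ell$ as lying on $\partial_+\bar S$ (whose cone swallows the whole half-space) or on $\partial^+\bar S$ (where $\ell'$ cannot extend further), with Claim~\ref{claim1empty} handling the disjoint-domain configuration. The only loose point is the phrase ``forces its $t$-coordinate to be very negative'': this tacitly assumes the starting point of $\ell'$ lies on $\partial_-\bar S$ rather than on $\partial^-\bar S$ (where $t$ would instead be large and positive); that assumption is justified because its $x$-coordinate exceeds $x_n\geqslant\min\{x:(t,x)\in\partial^+\bar S\}>\max\{x:(t,x)\in\partial^-\bar S\}$, but this should be stated explicitly.
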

We start proving the `only if' part of the claim. Let $\ell'\succeq\ell\in
C(S)$. Suppose $D(\ell)\cap D(\ell')=\emptyset$. By Claim~\ref{claim1empty}
$I(\ell)\cap I(\ell')=\emptyset$ and, since $t'(x')>t(x)$ for some $x'\in
D(\ell')$, $x\in D(\ell)$, we have $t'(x')>t(x)$ for all $x'\in D(\ell')$, $x\in
D(\ell)$. Consider the case $x>x'\ \forall\ x'\in D(\ell')$, $x\in D(\ell)$; the
other situation is analogous.
Writing $D(\ell)=\{x_0<\cdots<x_n\}$ and $t_0=t(x_0)$, we must have
$(t_0,x_0)\in\partial_-\bar S$ or $(t_0,x_0)\in\partial^-\bar S$ and the latter
is ruled out since there is $x'\in D(\ell')$ with $x'<x,t'(x')>t_0$ in $\bar S$.
Now as $(t_0,x_0)\in\partial_-\bar S$ we have $t+x\geqslant t_0+x_0\ \forall\
(t,x)\in\bar S$ and, as $x'<x_0,t'(x')>t_0\ \forall\ x'\in D(\ell')$ we have
$t'-x'>t_0-x_0$ for all $(t',x')\in\ell'$. Therefore $\ell'\subseteq
A(t_0,x_0)\subseteq A(\ell)$.
Suppose on the other hand that $D(\ell)\cap D(\ell')\ne\emptyset$,
take $x_0\in D(\ell)\cap D(\ell')$ and write
$D(\ell)=\{x_{-n},\dots,x_0,\dots,x_m\}$,
$D(\ell')=\{x_{-n'},\dots,x_0,\dots,x_{m'}\}$ and $D(\ell)\cap
D(\ell')=\{x_{-\tilde n},\dots,x_0,\dots,x_{\tilde m}\}$ with $x_{i+1}=x_i+1$
and $\tilde m = m\wedge m', \tilde n = n\wedge n'$. For $x\in D(\ell)\cap
D(\ell')$ we have $t'(x)\geqslant t(x)$ and of course $(t'(x),x)\in A(t(x),x)$.
Take $t_m=t(x_m)$, by definition $(t_m,x_m)\in\partial_+\bar S$ or
$(t_m,x_m)\in\partial^+\bar S$.
In the latter case it must be that $m\geqslant m'$, for if we suppose that
$m'\geqslant m$, then as $t'(x_m)\geqslant t(x_m)$, we must have
$t'(x_m)=t(x_m)$, thus $(t'(x_m),x_m)\in\partial^+\bar S$ and $m'=m$. In the
former case we have $\{(t,x)\in\bar S:x\geqslant x_m\}\subseteq A(t_m,x_m)$.
Therefore $(t'(x),x)\in A(\ell)$ for $x\in\{x_0,\dots,x_{m'}\}$. Analogous
arguments show that $(t'(x),x)\in A(\ell)$ for $x\in\{x_{-n'},\dots,x_0\}$.
The `if' part is shorter. Suppose $\ell'\subseteq A(\ell)$. Take some $x'\in
D(\ell')$, there is $(t,x)\in\ell$ such that $(t'(x'),x')\in A(t,x)$ and thus
$t'(x')\geqslant t+|x-x'|\geqslant t$. Now for $x'\in D(\ell')\cap D(\ell)$,
there is some $\tilde x\in D(\ell)$ such that $(t'(x'),x')\in A(t(\tilde
x),\tilde x)$. We want to show that $t'(x')\geqslant t(x')$ and it follows from
the fact that $\ell\subseteq A^-(t,x)$ for any $(t,x)\in\ell$, where
$A^-(t_*,x_*)=\{(t,x)\in \tilde\Z^2: t\geqslant t_*+|x-x_*|\}$.

Item~\ref{item1ellextreme} is easy.
If $\ell=(y_a,y_{a+1},\dots,y_b)\subseteq\ell'=(y_{a'},y_{a'+1},\dots,y_b')$
with $a'\leqslant a<b\leqslant b'$ for $\ell\in C(S)$, $\ell'\subseteq\bar S$,
then $a=a'$, $b=b'$ and therefore $\ell=\ell'$; for if $a'<a$ we would have
$y_a\in S$, contradicting $\ell\in C(S)$, same for $b>b'$.
Item~\ref{item1useless} is not used in this work and we omit its proof.

\section{Proof of Lemma~\ref{lemma1comparable}}
\label{sec1comparable}
Let $\ell,\ell'\subseteq\bar S$ such that $w(\ell),w(\ell')>0$.
If $t(x)=t'(x)\ \forall x\in D(\ell)\cap D(\ell')$ the result is trivial.
So suppose there is $x_0$ such that $t(x_0)\ne t'(x_0)$ and assume for
simplicity that $t(x_0)<t'(x_0)$. Write $D(\ell)\cap D(\ell') = \{x_{-\tilde
m},\dots,x_{-1},x_0,x_1,x_{\tilde n}\}$,
$\gamma(\ell)=\{y_{-m},e_{-m+1},J_{-m+1},y_{-m+1},\dots,e_0,J_0,y_0,\dots,e_m,
J_m,y_m\}$ and
\\
$\gamma(\ell')=\{y'_{-m'},e'_{-m'+1},J'_{-m'+1},y'_{-m'+1},\dots,e'_0,J'_0,y'_0,
\dots,e'_{m'},J'_{m'},y'_{m'}\}$. We want to show that $t(x_i)\leqslant t'(x_i)$
for $i=0,1,\dots,\tilde n$, the proof for $i=0,-1,\dots,-\tilde m$ is analogous.

We claim that, for $i=0,1,\dots,\tilde n-1$, the following facts hold:
$t(x_i)\leqslant t'(x_i)$, $t(x_{i+1})\leqslant t'(x_{i+1})$ and, in case
$t(x_i) = t'(x_i)$, $t(x_{i+1}) = t'(x_{i+1})$ we have $J_{i+1}\prec J'_{i+1}$,
i.e., $p<p'$ for all $p\in J_{i+1}$, $p'\in J'_{i+1}$. Let us prove the claim by
induction.
For $i=0$ the result is obvious because of~(\ref{eq1lbroktr}) and
$t(x_0)<t'(x_0)$.
Suppose the claim is true for $i=k-1$. We have three possibilities.
Case~A: $t(x_k)=t'(x_k)$ and $t(x_{k-1})=t'(x_{k-1})$.
Case~B: $t(x_k)=t'(x_k)$ and $t(x_{k-1})<t'(x_{k-1})$.
Case~C: $t(x_k)<t'(x_k)$.
In Case~C the claim holds for $i=k$ for the same reason it holds for $i=0$. In
Case~A $J_k\prec J'_k$ and, because of the association rules, we have
$t(x_{k+1})<t'(x_{k+1})$ or $t(x_{k+1})=t'(x_{k+1})$ with $J_{k+1}\prec
J'_{k+1}$; either way the claim holds for $i=k$.
In case~B, since $t(x_{k-1})<t'(x_{k-1})$, by the association rules we cannot
have $t(x_{k+1})>t'(x_{k+1})$ and if $t(x_{k+1})=t'(x_{k+1})$ we must have
$J_{k+1}\prec J'_{k+1}$, so the claim holds.

The proof is complete.

\section{Formalization of the brick diagram}
\label{sec1etaell}
In this appendix we formalize the construction of the brick diagram described in
the proof of Theorem~\ref{theo1etaell}. Then we show that~(\ref{eq1lweightcalc})
holds for any flow field in the converse part of the theorem and
that~(\ref{eq1lbtcarac}) holds for some flow field in its direct part.

As mentioned in the proof of the theorem, there are several equivalent
representations of a flow field and this proof relies on them. We describe how
to obtain from each representation the next one, and we mention the properties
of each representation that guarantee it is possible to come back to the
previous setting. Statements will be made without proof when their verification
is a bare tedious routine.
To simplify the presentation, assume that $S$ has the form
$S=\{(t,x)\in\tilde\Z^2:|t|+|x|\leqslant N\}$ for some $N\in2 \Z_+$.

Let $S^* = \{y\in\Z^2:\exists y'\in S:|y-y'|=1\}$. Notice that
$S^*\subseteq(\tilde\Z^2)^*=\Z^2\backslash\tilde\Z^2$.
The first alternative representation of a flow field is $p_{t,x}, (t,x)\in S^*$,
defined below.

Take $W_{-N-1}=0$, for $i=-N,\dots,0$ take $W_i = W_{i-1} +
\eta(e_{i,N+i}^\nwarrow)$ and for $i=1,\dots,N+1$ take $W_i = W_{i-1} +
\eta(e_{i-1,N-i+1}^\nearrow)$.
Let $p_{0,N+1} = W_0$.

For each $x=N,N-1,\dots,1,0,-1,\dots,-N,-N-1$, set
\[
 p_{t,x} = p_{t+1,x+1} - \eta(e_{t+1,x}^\nwarrow) 
\]
for $t=-(N+1-x),-(N+1-x)+2,\dots,(N+1-x)-2$ and
\[
 p_{t,x} = p_{t-1,x+1} + \eta(e_{t-1,x}^\nearrow)
\]
for $t=N+1-x$.

With successive uses of (\ref{eq1lcons2}) it is not hard to see that $p_{t,x}$
has the following properties:
\begin{eqnarray*}
  && p_{-N-1,0}=0,
  \\ &&
  p_{t,x} \leqslant p_{t+1,x\pm1}\qquad \mbox{whenever they belong to }S^*
\end{eqnarray*}
and that it is possible to re-obtain $\bar\eta$ from the $p(t,x)$ (writing $y=(t,x)$) by
\begin{equation}
\label{eq1etap}
\begin{array}{l}
  \eta(e_y^\swarrow) = p_{t,x-1} - p_{t-1,x}
\\
  \eta(e_y^\searrow) = p_{t+1,x} - p_{t,x-1}
\\
  \eta(e_y^\nwarrow) = p_{t,x+1} - p_{t-1,x}
\\
  \eta(e_y^\nearrow) = p_{t+1,x} - p_{t,x+1}.
\end{array}
\end{equation}

Now consider the set of all $p_{t,x}$ and reorder it by taking
$A=\{q_0,\dots,q_M\}=\{p_{t,x}: (t,x)\in S^*\}$ with $q_0<q_1<\cdots<q_M$. Then
$q_0=W_{-N-1}=p_{-N-1,0}=0$ and $q_M=W_{N+1}=p_{N+1,0}$.

For $(t,x)\in S^*$, take $k(t,x)\in\{0,\dots,M\}$ as the unique subindex $k$
that satisfies $p_{t,x}=q_{k}$.
Then
\begin{eqnarray}
  && k(-N-1,0)=0 
  \nonumber \\ &&
  k(N+1,0)=M
  \nonumber \\ &&
  k(t,x) \leqslant k(t+1,x\pm1)\qquad \mbox{whenever they belong to }S^*
  \label{eq1kconsist} \\ &&
 \mbox{For all } k=0,\dots,M \mbox{ there is } y\in S^* \mbox{ such that }
k(y)=k.
  \nonumber
\end{eqnarray}
Of course it is possible to re-obtain $p_{t,x}$ from $A$ and $k(t,x)$:
\[
  p_{t,x} = q_{k(t,x)}.
\]

For $y=(t,x)\in \bar S$, take $k_\pm(y)\in\{0,\dots,M\}$ as
\[
 k_-(t,x) =
 \begin{cases}
   k(t-1,x),& y\in S\cup\partial^+\bar S\cup\partial^-\bar S \\ 
   k(t,x-1),& y\in\partial_+(S) \\
   k(t,x+1),& y\in\partial_-(S).
 \end{cases}
\]
\[
 k_-(t,x) = 
 \begin{cases}
   k(t+1,x),& y\in S\cup\partial_+\bar S\cup\partial_-\bar S \\ 
   k(t,x-1),& y\in\partial^+(S) \\
   k(t,x+1),& y\in\partial^-(S).
 \end{cases}
\]
Then
\begin{eqnarray*}
  && k_-(-N,0)=0
  \\ &&
  k_+(N,0) = M
  \\ &&
  k_+(t,x) \leqslant k_+(t+1,x\pm1)\qquad \mbox{whenever they belong to } \bar S
  \\ &&
  k_-(t,x) \leqslant k_-(t+1,x\pm1)\qquad \mbox{whenever they belong to } \bar S
  \\ &&
  k_+(t,x) = k_-(t+2,x)\qquad \mbox{whenever they belong to } \bar S
  \\ &&
  \mbox{For all } k=1,\dots,M \mbox{ there is } y\in S \mbox{ such that }
k_+(y)=k
  \\ &&
  \mbox{For all } k=0,\dots,M-1 \mbox{ there is } y\in S \mbox{ such that }
k_-(y)=k.
\end{eqnarray*}
To obtain $k$ from $k_\pm$, take, for $(t,x)\in S^*$,
\[
 k(t,x) = 
 \begin{cases}
   0 ,& t=-N-1 \\ 
   M ,& t=N+1 \\ 
   k_-(t+1,x)=k_+(t-1,x), & \mbox{otherwise}
 \end{cases}.
\]

We may also consider $t(j,x)$, $j \in \{1,\dots,M\}, x\in\{-N,\dots,N\}$.
For such $(j,x)$, take
\[
 t(j,x)=
 \begin{cases} 
 t, & (t,x)\in\bar S \mbox{ and } k_-(t,x)<j\leqslant k_+(t,x) \\ 
 -\infty,&  j\leqslant k_-(t,x) \mbox{ for all $t$ such that } (t,x)\in\bar S
\\ 
 +\infty,&  j > k_+(t,x) \mbox{ for all $t$ such that } (t,x)\in\bar S.
 \end{cases}
\]
When $|t(j,x)|\ne\infty$, $(t(j,x),x)$ is an element of $\bar S$.
Given a fixed $j$, if $t(j,x_0)=+\infty$ for some $x_0>0$ (resp. $<0$) then
$t(j,x)=+\infty$ for all $x\geqslant x_0$ (resp. $\leqslant x_0$); same for
$-\infty$. For $x=0$ it is always the case that $t(j,0)\in\{-N,\dots,N\}$.
Also, $t(j,x)\leqslant t(j+1,x)$ for $j\in\{1,\dots,M-1\}, x\in\{-N,\dots,N\}$.
For all $j\in\{1,\dots,M-1\}$ there is $x\in\{-N,\dots,N\}$ such that $t(j,x)<
t(j+1,x)$. Also $t(j,x+1) = t(j,x)\pm 1$ for all $j\in\{1,\dots,M\},
x\in\{-N,\dots,N-1\}$ such that $|t(j,x+1)|,|t(j,x)|\ne\infty$.
If $|t(j,x)|<\infty$ and $|t(j,x+1)|=\infty$ or $|t(j,x-1)|=\infty$ then
$(t(j,x),x)\in\partial\bar S$.

To relate $t(j,x)$ with $k_\pm(t,x)$ take $t(0,x)=-\infty$, $t(M+1,x)=+\infty$
and
\[
  k_-(t,x) = \max\{k:0\leqslant k\leqslant M, t(k,x)<t\},
\]
\[
  k_+(t,x) = \min\{k:0\leqslant k\leqslant M, t(k+1,x)>t\}.
\]

Now for $j\in\{1,\dots,M\}$ take $a = \min\{x:t(j,x)\ne\pm\infty\}, b =
\max\{x:t(j,x)\ne\pm\infty\}$ and define $ y_i = (t(j,x),x) $
for $a\leqslant x\leqslant b$.
Take $\ell_j = (y_a,e_{a+1},y_{a+1},\dots,e_b,y_b)$, $e_j=\langle
y_{j-1},y_j\rangle$. Then $\ell_1,\dots,\ell_M\in C(S)$ and
\begin{equation}
 \label{eq1ellconstrord}
 \ell_1\prec\ell_2\prec\cdots\prec\ell_M.
\end{equation}

Given the set $\ell_1\prec\cdots\prec\ell_M$, write
$D(\ell_j)=\{x^-_j,x^-_j+1,\dots,-1,0,1,\dots,x^+_j-1,x^+_j\}$ and
$y=(t_j(x),x)$ for $y\in\ell_j$.
Then $(t_j(x^\pm_j),x^\pm_j)\in\partial_\pm\bar S\cup\partial^\pm\bar S$. Take
\[
 t(j,x)=
 \begin{cases} 
    t_j(x), & x_j^-\leqslant x\leqslant x_j^+ \\
    +\infty, & x>x_j^+ \mbox{ and } t_j(x_j^+)>0 \mbox{ or }
               x<x_j^- \mbox{ and } t_j(x_j^-)>0 \\
    -\infty, & x>x_j^+ \mbox{ and } t_j(x_j^+)<0 \mbox{ or }
               x<x_j^- \mbox{ and } t_j(x_j^-)<0.
  \end{cases}
\]

This completes the set of equivalences:
\[
 \bar\eta \leftrightarrow p \leftrightarrow (A,k), \quad
  k \leftrightarrow k_\pm \leftrightarrow t \leftrightarrow \ell.
\]

For $y=(t,x)\in \bar S$, take $K(y)=\{k_-(y)+1,\dots,k_+(y)\}$. It follows from
the definition of $t(j,x)$ that
\begin{equation}
 \label{eq1tjequiv}
 \mbox{for all }(t,x)\in\bar S,j\in\{1,\dots,M\},
 \qquad j\in K(t,x) \Leftrightarrow t(j,x)=t.
\end{equation}

For $e\in\E(\bar S)$, take $k_-(e)=k_-(y) \vee k_-(y')$ and $k_+(e) = k_+(y)
\wedge k_+(y')$, where $e=\langle y,y' \rangle$. Also take
$K(e)=\{k_-(e)+1,\dots,k_+(e)\}=K(y)\cap K(y')$.
Yet for $\ell=\left( y_0,e_1,y_1,e_2,y_2,\dots,e_n,y_n \right)\subseteq\bar S$,
take $K(\ell)=K(y_0)\cap\cdots\cap K(y_n)=K(e_1)\cap\cdots\cap K(e_n)$.

\begin{claim}
 \label{claim11}
 Let $\ell\subseteq\bar S$ and $j\in\{1,\dots,M\}$. Then
 $\ell\subseteq\ell_j$ if and only if $j\in K(\ell)$.
\end{claim}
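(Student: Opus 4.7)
The plan is to reduce the claim to the pointwise equivalence~\eqref{eq1tjequiv}, which already encodes exactly what we need: membership of $j$ in $K(t,x)$ is the same as the statement $t(j,x)=t$. Given this, both directions amount to unpacking the definition of $\ell \subseteq \ell_j$ and applying~\eqref{eq1tjequiv} one vertex at a time, together with the fact that a broken trace is determined by its sequence of sites via the step constraint~\eqref{eq1lbroktr}.

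For the forward direction, I would assume $\ell = (y_0,e_1,y_1,\dots,e_n,y_n) \subseteq \ell_j$. By the definition of $\subseteq$ for broken traces this means $D(\ell) \subseteq D(\ell_j)$ and $t(x) = t_j(x)$ for every $x \in D(\ell)$. Writing $y_i = (t_i, x_i)$, this gives $t_i = t_j(x_i) = t(j, x_i)$, and then~\eqref{eq1tjequiv} yields $j \in K(y_i)$. Intersecting over $i = 0, \dots, n$ gives $j \in K(\ell)$, as required.

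For the converse, I would assume $j \in K(\ell) = K(y_0) \cap \cdots \cap K(y_n)$ and apply~\eqref{eq1tjequiv} to each vertex to get $t(j, x_i) = t_i$; in particular $t(j, x_i)$ is finite. By the construction of $\ell_j$ as the path whose domain is $\{x : |t(j,x)| \ne \infty\}$ and whose $t$-coordinate at $x$ is $t(j,x) = t_j(x)$, each $x_i$ lies in $D(\ell_j)$ and $t_j(x_i) = t_i = t(x_i)$. Since $D(\ell)$ consists of consecutive integers it is then automatically a sub-interval of $D(\ell_j)$, and we conclude $\ell \subseteq \ell_j$.

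I do not expect any genuine obstacle: the heavy lifting has already been done in setting up~\eqref{eq1tjequiv} and defining $\ell_j$ as the level set of $t(j, \cdot)$. The only care needed is bookkeeping at the endpoints --- verifying that having each individual $x_i$ with $|t(j, x_i)| \ne \infty$ indeed places $x_i$ in the interval $[x_j^-, x_j^+]$, which is immediate from the characterisation of $D(\ell_j)$ recorded just after~\eqref{eq1tjequiv}.
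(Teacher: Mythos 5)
Your proposal is correct and follows essentially the same route as the paper's own proof: both directions reduce to the pointwise equivalence~\eqref{eq1tjequiv} together with the construction of $\ell_j$ as the path $x \mapsto (t(j,x),x)$ on $\{x : |t(j,x)| \ne \infty\}$. The endpoint bookkeeping you flag is handled in the paper exactly as you anticipate, via the interval structure of $\{x : |t(j,x)|\ne\infty\}$ recorded among the listed properties of $t(j,\cdot)$.
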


To prove the claim, take $\ell\subseteq\ell_j$. Write
$\ell=(y_{a'},e_{a'+1},y_{a'+1},\dots,y_{b'})$ and
$\ell_j=(y_{a},e_{a+1},y_{a+1},\dots,y_{b})$ with $a\leqslant a'<b'\leqslant b$.
By construction of $\ell_j$, for all $a\leqslant i\leqslant b$, $t(j,x_i)=t_i$
and, by~(\ref{eq1tjequiv}), $j\in K(y_i)$. Therefore $j\in K(a')\cap\cdots \cap
K(b') = K(\ell)$.
Now suppose $j\in K(\ell)$ for some $\ell\subseteq\bar S$. Write
$\ell=(y_0,e_1,y_1,\dots,y_n)$ and, for $i=0,\dots,n$, $y_i=(t_i,x_i)$. By
definition $j\in K(y_i)$ for all $i$, by~(\ref{eq1tjequiv}) this implies
$t(j,x_i)=t_i$ and by construction of $\ell_j$ we have $y_i\in\ell_j\ \forall\
i$; it follows from this last fact that $\ell\subseteq\ell_j$, thus proving the
claim.

Let $J^j=(q_{j-1},q_j],\ j\in\{1,\dots,M\}$. Notice that
\begin{equation}
 \label{eq1intrvldisjoint}
 J^j \cap J^{j'} = \emptyset\mbox{ when } j\ne j'.
\end{equation}
For $y\in S$, take $ J(y) = \bigcup_{j\in K(y)}J^j $ and for $\ell\subseteq\bar
S$ take $J(\ell) = \bigcup_{j\in K(\ell)}J^j$. Given $e=\langle
y,y'\rangle\in\E(\bar S)$, let $J(e)=(q_{k_-(e)},q_{k_+(e)}]$. Notice that
$J(e)=J(y)\cap J(y')$, since $J(e)=\cup_{j\in K(e)}J^j$. It follows
from~(\ref{eq1etap})-(\ref{eq1kconsist}) that
\[
 |J(e)| = \eta(e).
\]

By the above fact there is one translation $T_e$ that relates $(0,\eta(e)]$ with
$J(e)$. We associate the atoms and subintervals of these translated intervals
according to the following rules. We write $(e_1,p_1)\approx(e_2,p_2)$ when
$(e_1,T_{e_1}^{-1}p_1)\sim(e_2,T_{e_2}^{-1}p_2)$ and $(e_1,J_1)\approx(e_2,J_2)$
when $(e_1,T_{e_1}^{-1}J_1)\sim(e_2,T_{e_2}^{-1}J_2)$.

\begin{claim}
\label{claim1ltransequiv}
 Let $e_1=\langle y_0,y_1\rangle,e_2=\langle y_1,y_2\rangle$ with $x_0<x_1<x_2$
be two adjacent edges in $\E(\bar S)$ and $J_1,J_2$ intervals. Then
$(e_1,J_1)\approx(e_2,J_2)$ if and only if $J_1=J_2$ and $J_1\subseteq
J(y_0)\cap J(y_1)\cap J(y_2)$.
\end{claim}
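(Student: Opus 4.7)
The plan is to argue by case analysis on the four possible positions of $(e_1,e_2)$ at the shared vertex $y_1$, reducing everything to calculations inside the brick of $y_1$ via the explicit form of $T_e$ obtained from the construction earlier in this appendix.

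First I would pin down $T_e$. Since $T_e$ is the translation carrying $(0,\eta(e)]$ onto $J(e)=(q_{k_-(e)},q_{k_+(e)}]$, it is the shift by $q_{k_-(e)}$. Writing $L=q_{k_-(y_1)}$ and $R=q_{k_+(y_1)}$ for the endpoints of the brick of $y_1$, and unwinding $k_\pm(e)=k_\pm(y)\vee k_\pm(y')$ together with the identities~\eqref{eq1etap}, one obtains the layout
$$J(e^\nwarrow_{y_1})=(L,L+\eta(e^\nwarrow_{y_1})],\quad J(e^\nearrow_{y_1})=(L+\eta(e^\nwarrow_{y_1}),R],$$
$$J(e^\swarrow_{y_1})=(L,L+\eta(e^\swarrow_{y_1})],\quad J(e^\searrow_{y_1})=(L+\eta(e^\swarrow_{y_1}),R].$$
Since $K(e_i)=K(y_{i-1})\cap K(y_i)$ and the $J^j$ are pairwise disjoint by~\eqref{eq1intrvldisjoint}, it also follows that $J(y_0)\cap J(y_1)\cap J(y_2)=J(e_1)\cap J(e_2)$.

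Next I would exhaust the four configurations compatible with $x_0<x_1<x_2$. In the two ``straight'' cases $(e_1,e_2)=(e^\swarrow_{y_1},e^\nearrow_{y_1})$ and $(e^\nwarrow_{y_1},e^\searrow_{y_1})$, the defining relations for $\sim$ (Cases~3 and~2 of the rules, the latter via symmetry of $\sim$) combine with the translations to give $T_{e_1}(p_1)=T_{e_2}(p_2)$ on the admissible range. For instance, in the first case $p_2=p_1-\eta(e^\nwarrow_{y_1})$ yields $T_{e_2}(p_2)=L+\eta(e^\nwarrow_{y_1})+p_2=L+p_1=T_{e_1}(p_1)$, while the range constraint $p_1\in(\eta(e^\nwarrow_{y_1}),\eta(e_1)]$ translates to $T_{e_1}(p_1)\in(L+\eta(e^\nwarrow_{y_1}),L+\eta(e^\swarrow_{y_1})]=J(e_1)\cap J(e_2)$. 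Hence $(e_1,J_1)\approx(e_2,J_2)$ is equivalent to $J_1=J_2\subseteq J(e_1)\cap J(e_2)=J(y_0)\cap J(y_1)\cap J(y_2)$, which is the claim.

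In the two remaining configurations, ``valley'' $(e^\swarrow_{y_1},e^\searrow_{y_1})$ and ``peak'' $(e^\nwarrow_{y_1},e^\nearrow_{y_1})$, the pair $(e_1,e_2)$ matches none of Cases~1--4 of $\sim$, since each of those rules pairs an edge incident to $y_1$ from above with one incident from below. Thus $(e_1,p_1)\sim(e_2,p_2)$ can never hold, and $(e_1,J_1)\approx(e_2,J_2)$ forces $J_1=J_2=\emptyset$. On the right-hand side, any $j\in K(y_0)\cap K(y_1)\cap K(y_2)$ would force $\ell_j$ to contain $y_0,y_1,y_2$ as three consecutive vertices (strict $x$-monotonicity), so the adjacent edges of $\ell_j$ at $y_1$ would have to be exactly the forbidden pair, which is impossible; hence $J(y_0)\cap J(y_1)\cap J(y_2)=\emptyset$ and both sides of the equivalence collapse to $J_1=J_2=\emptyset$. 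The main obstacle is the opening bookkeeping step: locating each $J(e)$ precisely inside the brick of $y_1$ via the chain $\bar\eta\leftrightarrow p\leftrightarrow(A,k)\leftrightarrow k_\pm\leftrightarrow t\leftrightarrow\ell$ established earlier in this appendix. Once that layout is in hand, the case analysis is mechanical.
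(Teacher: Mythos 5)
Your reduction to showing $(e_1,p_1)\approx(e_2,p_2)$ iff $T_{e_1}p_1=T_{e_2}p_2\in J(e_1)\cap J(e_2)$, and your treatment of the two ``straight'' configurations via the explicit shift $T_ep=p+q_{k_-(e)}$ and~\eqref{eq1etap}, match the paper's argument. But your case analysis misidentifies which configurations actually occur, and this leaves a genuine gap. By~\eqref{eq1lbroktr} a broken trace is strictly monotone in $x$ (not in $t$), so with $x_0<x_1<x_2$ the edge $e_1$ must be incident to $y_1$ from below in $x$, i.e.\ $e_1\in\{e^\swarrow_{y_1},e^\searrow_{y_1}\}$, and $e_2$ from above, i.e.\ $e_2\in\{e^\nwarrow_{y_1},e^\nearrow_{y_1}\}$. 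The four admissible configurations are therefore exactly Cases~1--4 of the association rules. Your ``valley'' $(e^\swarrow_{y_1},e^\searrow_{y_1})$ and ``peak'' $(e^\nwarrow_{y_1},e^\nearrow_{y_1})$ are impossible under $x_0<x_1<x_2$ (both would force $x_2=x_1-1$ or $x_0=x_1+1$), while the two configurations you actually need and omit are Case~1 $(e^\swarrow_{y_1},e^\nwarrow_{y_1})$ (annihilation, a right corner) and Case~4 $(e^\searrow_{y_1},e^\nearrow_{y_1})$ (birth, a left corner). These occur at every corner of a broken line and carry positive weight; your argument, taken literally, would conclude $J(y_0)\cap J(y_1)\cap J(y_2)=\emptyset$ there, which is false and would contradict the existence of broken lines with corners.

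The good news is that your opening ``layout'' of the four intervals inside the brick of $y_1$ already contains what is needed: in Case~1 both $J(e^\swarrow_{y_1})$ and $J(e^\nwarrow_{y_1})$ are anchored at the left endpoint $L=q_{k_-(y_1)}$, so $p_2=p_1$ gives $T_{e_1}p_1=L+p_1=T_{e_2}p_2$; in Case~4 both $J(e^\searrow_{y_1})$ and $J(e^\nearrow_{y_1})$ are anchored at the right endpoint $R=q_{k_+(y_1)}$, so $\eta(e_1)-p_1=\eta(e_2)-p_2$ gives $T_{e_i}p_i=R-(\eta(e_i)-p_i)$ equal for $i=1,2$, with the range constraints matching $J(e_1)\cap J(e_2)$ in each case. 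You should replace the ``valley/peak'' paragraph with these two verifications (this is what the paper does, treating Cases~1 and~2 explicitly and leaving 3 and~4 as similar); as written, the proof does not cover the birth and annihilation vertices, which are the substance of the claim.
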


To prove the claim it suffices to show that $(e_1,p_1)\approx(e_2,p_2)$ iff
$p_1=p_2\in J(e_1)\cap J(e_2)$, since $T_e:(0,\eta(e)]\to J(e)$ is just a
translation and $J(e_1)\cap J(e_2)=J(y_0)\cap J(y_1)\cap J(y_2)$.

So suppose $(e_1,p_1)\approx(e_2,p_2)$. Write $p_1=T_{e_1}\tilde p_1$ and
$p_2=T_{e_2}\tilde p_2$. Of course $\tilde p_1\in(0,\eta(e_1)]$, thus $p_1\in
J(e_1)$; also $p_2\in J(e_2)$. Writing $T_e$ more explicitly one gets $T_ep =
p+q_{k_-(e)}$. We have 4 cases to consider. Case~1: $e_1=e_{y_1}^\swarrow$,
$e_2=e_{y_1}^\nwarrow$.
In this case $\tilde p_1=\tilde p_2$, $k_-(y_0) \leqslant
k_-(y_1)$ and $k_-(y_2) \leqslant k_-(y_1)$, thus $k_-(e_1)=k_-(e_2)$ and
$p_1=\tilde p_1 + q_{k_-(e_1)} = \tilde p_2 + q_{k_-(e_2)} = p_2$, since by
hypothesis $\tilde p_1=\tilde p_2$.
Case~2: $e_1=e_{y_1}^\searrow$, $e_2=e_{y_1}^\nwarrow$. In this case $\tilde
p_1=\tilde p_2 - \eta(e_{y_1}^\swarrow)$ and $k_-(y_2)\leqslant k_-(y_1)$, thus
$k_2(e_2) = k_-(y_1)$. By~(\ref{eq1etap}) $q_{k_-(y_1)} + \eta(e_{y_1}^\swarrow)
= q_{k(t_1-1,x_1)}+\eta(e_{y_1}^\swarrow) = q_{k(t_1,x_1-1)} = q_{k_-(e_1)}$, so
$p_1=\tilde p_1 + q_{k_-(e_1)} = \tilde p_1 + q_{k_-(y_1)} +
\eta(e_{y_1}^\swarrow) = \tilde p_2 + q_{k_-(y_1)} = p_2$. Cases 3 and 4 are
similar.

Conversely, suppose $p_1=p_2\in J(e_1)\cap J(e_2)$. Consider Case~3, i.e.,
$e_1=e_{y_1}^\swarrow$, $e_2=e_{y_1}^\nearrow$; the other cases are similar.
Take $\tilde p_1= T_{e_1}^{-1}p_1 = p_1-q_{k_-(e_1)}= p_1-q_{k_-(y_1)} \in
(0,\eta(e_1)]$
and $\tilde p_2= T_{e_2}^{-1}p_2 = p_2-q_{k_-(e_2)}= p_2-q_{k_-(y_2)} =
p_2-q_{k_-(t_1+1,x_1+1)} = p_2 - p_{x_1+1,t_1} = p_2 -
[p_{x_1,t_1-1}+\eta(e_{y_1}^\nwarrow)] = p_2 - q_{k_-(y_1)} -
\eta(e_{y_1}^\nwarrow) = \tilde p_1-\eta(e_{y_1}^\nwarrow) \in
(0,\eta(e_2)]$.
Since $\tilde p_2>0$ we have $\tilde p_1>\eta(e_{y_1}^\nwarrow)$.
Therefore $(e_1,p_1)\approx(e_2,p_2)$ and the claim holds.

It will be convenient to work with a different representation of a broken line,
which we dub \emph{translated broken line}. The translated broken lines have a
simpler representation that follows from Claim~\ref{claim1ltransequiv}.
Given a broken line $\gamma\in B(\bar\eta)$ in $\bar S$, we define the object
$\psi=\psi(\gamma)$ by
\[
 \psi = \left((y_0,e_1,y_1,\dots,e_n,y_n),J\right),
\]
where $(y_0,e_1,y_1,\dots,e_n,y_n)=\ell(\gamma)$ and
$J=T_{e_1}J_1=T_{e_2}J_2=\cdots=T_{e_n}J_n$.
Write $\gamma(\psi)$ for the unique broken line $\gamma$ such that
$\psi=\psi(\gamma)$.
For any $\psi$ of the above form we have $\gamma(\psi) =
(y_0,e_1,J_1,y_1,\dots,e_n,J_n,y_n)$, where $J_i=T_{e_i}^{-1}J$.
The translated broken lines have all the properties analogous to those already
discussed for the broken lines.
For $\psi=(\ell_1,J)$, define $\ell(\psi)=\ell_1$ and
$w(\psi)=|J|=w(\gamma(\psi))$.

\begin{claim}
 \label{claim12}
 Let $\ell=(y_0,e_1,y_1,\dots,e_n,y_n)\subseteq\bar S$ and $J\subseteq(0,q_M]$.
 Then $(e_1,J)\approx\cdots\approx(e_n,J)$ if and only if $J\subseteq J(\ell)$.
\end{claim}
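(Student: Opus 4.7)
The plan is to induct on the length of $\ell$ via direct use of Claim~\ref{claim1ltransequiv}, which handles exactly the two-edge case. The key observation that links the ``local'' hypotheses from Claim~\ref{claim1ltransequiv} to the ``global'' conclusion $J\subseteq J(\ell)$ is that each $K(y)=\{k_-(y)+1,\dots,k_+(y)\}$ is a set of consecutive integers, so
\[
  J(y)=\bigcup_{j\in K(y)}J^j=(q_{k_-(y)},q_{k_+(y)}]
\]
is itself an interval. Combined with the disjointness~\eqref{eq1intrvldisjoint} of the $J^j$, this yields
\[
  \bigcap_{i=0}^n J(y_i)=\bigl(q_{\max_i k_-(y_i)},q_{\min_i k_+(y_i)}\bigr]
  =\bigcup_{j\in K(\ell)}J^j=J(\ell),
\]
since $K(\ell)=\bigcap_i K(y_i)=\{\max_i k_-(y_i)+1,\dots,\min_i k_+(y_i)\}$ is also consecutive.

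For the forward direction, I assume $(e_1,J)\approx(e_2,J)\approx\cdots\approx(e_n,J)$. Applying Claim~\ref{claim1ltransequiv} to each consecutive pair $(e_i,e_{i+1})$ sharing vertex $y_i$, for $i=1,\dots,n-1$, gives $J\subseteq J(y_{i-1})\cap J(y_i)\cap J(y_{i+1})$. Taking the intersection of these inclusions over $i$ yields $J\subseteq\bigcap_{i=0}^n J(y_i)=J(\ell)$ by the identity above. (When $n=1$ the chain is vacuous; there is nothing to check beyond the convention that $J\subseteq J(e_1)=J(\ell)$ is required for $(e_1,J)$ to represent a valid translated atom, since $T_{e_1}^{-1}J\subseteq(0,\eta(e_1)]$ forces $J\subseteq J(e_1)$.)

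For the converse, suppose $J\subseteq J(\ell)$. Since $K(\ell)\subseteq K(y_i)$ for each $i$, we have $J(\ell)\subseteq J(y_i)$ for each $i$, hence $J\subseteq J(y_{i-1})\cap J(y_i)\cap J(y_{i+1})$ for every $i\in\{1,\dots,n-1\}$. Applying Claim~\ref{claim1ltransequiv} in the ``if'' direction with $J_1=J_2=J$ at the vertex $y_i$ then yields $(e_i,J)\approx(e_{i+1},J)$, and concatenating these relations gives the desired chain.

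The only real subtlety is the bookkeeping identity $\bigcap_i J(y_i)=J(\ell)$, which relies crucially on the consecutive nature of the $K(y)$'s — without it, the intersection of unions would not collapse neatly — together with the disjointness~\eqref{eq1intrvldisjoint}. Everything else is a direct unpacking of Claim~\ref{claim1ltransequiv}.
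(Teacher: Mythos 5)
Your proof is correct and follows essentially the same route as the paper's: apply Claim~\ref{claim1ltransequiv} to each pair of adjacent edges and then collapse $\bigcap_{i}J(y_i)$ to $J(\ell)$ using the disjointness~\eqref{eq1intrvldisjoint} of the $J^j$. Your extra observation that each $J(y_i)$ is a single interval is a harmless (and slightly cleaner) way of phrasing the same bookkeeping step, and your remark on the $n=1$ case is a reasonable handling of an edge case the paper leaves implicit.
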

The proof is short. Suppose $(e_1,J)\approx\cdots\approx(e_n,J)$. Since
$(e_1,J)\approx(e_2,J)$, by Claim~\ref{claim1ltransequiv} we have $J\subseteq
J(y_0)\cap J(y_1)\cap J(y_2)$. Also $(e_2,J)\approx(e_3,J)$, thus $J\subseteq
J(y_3)$ as well. Similarly, for $i=2,\dots,n$, $(e_{i-1},J)\approx(e_i,J)$ and
by Claim~\ref{claim1ltransequiv} $J\subseteq J(y_i)$. Therefore
$J\subseteq\cap_{i=0}^n J(y_i) = \cap_{i=0}^n \cup_{j\in K(y_i)} J^j =
\cup_{j\in [\cap_{i=0}^n K(y_i)]} J^j = \cup_{j\in K(\ell)} J^j = J(\ell)$; we
have used~(\ref{eq1intrvldisjoint}) on the second equality. Conversely, if
$J\subseteq J(\ell)$ we have $J\subseteq J(y_i)$ for $i=0,\dots,n$ and by
Claim~\ref{claim1ltransequiv} we have $(e_{i-1},J)\approx(e_i,J)$ for all
$i=2,\dots,n$, i.e., $(e_1,J)\approx\cdots\approx(e_n,J)$, which proves the
claim.

We define $\tilde B(\bar\eta) = \{\psi(\gamma):\gamma\in B(\bar\eta)\}$. By
definition of translated broken lines we have $\psi\in\tilde B(\bar\eta)$ if and
only if $(e_1,J)\approx\cdots\approx(e_n,J)$. It follows from
Claim~\ref{claim12} that
\begin{equation}
 \label{eq1tblequiv}
 \tilde B(\bar\eta) = \{\psi=(\ell,J):\ell\subseteq\bar S, J\subseteq J(\ell)\}.
\end{equation}
As for the broken lines, given an $\ell\subseteq\bar S$, one can define the
maximal translated broken line $\psi(\ell)$ in $\tilde B(\bar\eta)$ that has
trace $\ell$.
Notice that $\psi(\ell)=\psi(\gamma(\ell))$ and
$\gamma(\ell)=\gamma(\psi(\ell))$. Now by~(\ref{eq1tblequiv}) one has
$\psi(\ell) = (\ell,J(\ell))$ and therefore
\begin{equation}
 \label{eq1constrwcalc}
 w(\ell) = \sum_j w_j \I_{j\in K(\ell)}.
\end{equation}

Let $\ell\in C(S)$. By Claim~\ref{claim11} and Item~\ref{item1ellextreme} of
Lemma~\ref{lemma1lorder} we have $j\in K(\ell)$ if and only if $\ell=\ell_j$.
Thus we have by (\ref{eq1ellconstrord})
\begin{equation}
 \label{eq1constrkell}
 K(\ell) =
 \begin{cases} 
    \{j\}, & \ell=\ell_j \\ 
    \emptyset, & \mbox{otherwise}
 \end{cases}
 \qquad \mbox{for any }\ell\in C(S).
\end{equation}

Now (\ref{eq1lbtcarac}) follows from (\ref{eq1constrkell}) and
(\ref{eq1constrwcalc}). Finally, combining (\ref{eq1constrwcalc}) and
Claim~\ref{claim11} gives
\[
 w(\ell) = \sum_j w_j \I_{\ell\subseteq\ell_j}
\]
and (\ref{eq1lweightcalc}) follows from (\ref{eq1lbtcarac}) and the above
equation.

It remains to prove that, given any pair of sets
$\{\ell_1\prec\ell_2\prec\cdots\prec\ell_M\}$ and $\{w_1,\dots,w_M>0\}$,
equation~(\ref{eq1lbtcarac}) holds for some $\bar\eta$. For $j=0,1,\dots,M$, let
$q_j=\sum_{j'=1}^jw_{j'}$ and let $A=\{0=q_0<q_1<\cdots<q_M\}$.
For $x\in\{-N,\dots,N\}$, $j\in\{1,\dots,M\}$ define $t(j,x)$ from the
$\{\ell_j\}$ as shown above.
It then follows that $t(j,x)$ has all the properties mentioned in the
construction. From $t(j,x)$, define $k_\pm(y),y\in\bar S$ and then $K(y),y\in
S^*$.
With $A$ defined above and $k(\cdot)$ we can recover $p_{t,x},(t,x)\in S^*$ and
from that we obtain the associated flow field $\bar\eta$. But for such
$\bar\eta$~(\ref{eq1lbtcarac}) holds by the construction described at this
appendix.

\section{Proof of~(\ref{eq1intersect})}
\label{sec1intersect}

As in the proof of Lemma~\ref{lemma1lorder}, take $A(t,x)=\{(\tilde t,\tilde
x):\tilde t\geqslant t+|\tilde x-x|\}$, $A(V)=\cup_{y\in V}A(y)$. Consider also
$\tilde A(t,x)=\{(\tilde t,\tilde x):\tilde t\leqslant t-|\tilde x-x|\}$. Then
$y\in A(y')$ iff $y'\in\tilde A(y)$.
Since $y_m\in A(\ell)$, we can take $n=\min\{i:y_i\in A(\ell)\}$.
There is $y_*\in\ell$ such that $y_n\in A(y_*)$
and thus $y_*\in\tilde A(y_n)$.
If $n=0$, $y_*\in\bar S\cap\tilde A(y_0)=\{(x_0,t_0),(x_0+1,t_0-1),(x_0-1,t_0-1)\}$ and any of these possibilities for $y_*$ imply $y_0\in\ell$.
So suppose $n>0$.
By construction $x_{n-1}=x_n\pm1$, assume for simplicity
$x_{n-1}=x_n+1$. Now $y_{n-1}\not\in A(\ell)$, which means $\ell\cap\tilde
A(y_{n-1})=\emptyset$. So $y_*\in\tilde A(y_n)\backslash\tilde A(y_{n-1})$ and
thus $x_*\leqslant x_n$, $t_*=t_n-(x_n-x_*)$. Notice that $\ell$ must eventually
reach $\{(t,x):t-(t_n+1)=(x_n+1)-x\}$ to cross $\bar S$. Let $y'=(t',x')$ be the
point of the first time it happens, that is, the one with smallest $x'$. Since
$\ell\cap\tilde A(y_{n-1})=\emptyset$, we must have $t'\geqslant t_n+1$, so
$x'\leqslant x_n+1$.
Thus $t'-t_* = (t'-t_n)+(t_n-t_*) \geqslant 1+x_n-x_* \geqslant x'-x*$ and the
equality holds if and only if $t'-t_n=1,x'-x_n=1$. But it must be the case that
equality holds because (\ref{eq1lbroktr}) implies $|t'-t_*|\leqslant|x'-x_*|$.
So $y'=(t_n+1,x_n+1)\in\ell$.
Now we only need to observe that $\ell$ must connect $y_*$ to $y'$ through
$y_n$. Indeed, if $t(x_n)>t_n$ we would have $t(x_n)-t(x_*)>x_n-x_*$ and if
$t(x_n)<t_n$ we would have $t(x')-t(x_n)>x'-x_n$ and either of them is absurd
because of~(\ref{eq1lbroktr}).

\section*{Acknowledgments}
We thank V.~Beffara and A.~Ramírez for fruitful discussions.
L.~T.~Rolla thanks the hospitality of PUC-Chile.
This work had financial support from CNPq grants 302796/2002-9, 141114/2004-5, 302221/2008-5, and 485071/2006-1, FAPERJ s.n., FAPERJ grant E-26/100.626/2007 APQ1, FAPESP grant 07/58470-1, FSM-Paris, and the Lithuanian State Science and Studies Foundation grant T-70/09.

\end{document}